\documentclass[phd,tocprelim]{cornell}
%
%
%
%
\usepackage{graphicx, pstricks}
\usepackage{hangcaption}
\usepackage{palatino}

\usepackage{amsthm}  
\usepackage{amsmath}
\usepackage{epsfig}
\usepackage{graphicx}
\usepackage{amsfonts, amscd}
\usepackage{amssymb, stmaryrd}
\usepackage{color, xypic}
\usepackage{epsfig,latexsym}

\input xy
\xyoption{all}

\tolerance=9999

\bibliographystyle{plain}

\renewcommand{\caption}[1]{\singlespacing\hangcaption{#1}\normalspacing}

\newtheorem{Thm}{Theorem}

\newtheorem{Cor}{Corollary}
\newtheorem{Lem}{Lemma}
\newtheorem{Prop}{Proposition}

\newtheorem{Exa}{Example}

\newtheoremstyle{ex_contd}{}{}{\itshape}{}{\bfseries}{}{ }{\thmname{#1}\thmnote{ (\ndseries #3)}}
\theoremstyle{ex_contd}
\newtheorem{cont_ex}{Continuation of Example }  

\newcommand{\ra}{\rightarrow}
\newcommand{\C}{\mathbb{C}}
\newcommand{\Z}{\mathbb{Z}}
\newcommand{\Q}{\mathbb{Q}}
\newcommand{\R}{\mathbb{R}}
\newcommand{\N}{\mathbb{N}}

\newcommand{\af}{\text{\em AffFlag}_}
\newcommand{\ag}{\text{\em AffGr}_}
\newcommand\Tr{{\rm Tr\,}}
\newcommand\init{\mathop{\rm init}\space}
\newcommand\rank{\mathop{\rm rank}\space}

\def\Le{{\rotatebox[origin=c]{180}{$\Gamma$}}}

\font\co=lcircle10
\def\petit#1{{\scriptstyle #1}}

\def\jr{\smash{\raise2pt\hbox{\co \rlap{\rlap{\char'005} \char'007}}
               \raise6pt\hbox{\rlap{\vrule height6.5pt}}
               \raise2pt\hbox{\rlap{\hskip4pt \vrule height0.4pt depth0pt
                width7.7pt}}}}
\def\je{\smash{\raise2pt\hbox{\co \rlap{\rlap{\char'005}
                \phantom{\char'007}}}\raise6pt\hbox{\rlap{\vrule height6pt}}}}
\def\+{\smash{\lower2pt\hbox{\rlap{\vrule height14pt}}
                \raise2pt\hbox{\rlap{\hskip-3pt \vrule height.4pt depth0pt
                width14.7pt}}}}
\def\perm#1#2{\hbox{\rlap{$\petit {#1}_{\scriptscriptstyle #2}$}}%
                \phantom{\petit 1}}
\def\phperm{\phantom{\perm w3}}

\def\textcross{\ \smash{\lower4pt\hbox{\rlap{\hskip4.15pt\vrule height14pt}}
                \raise2.8pt\hbox{\rlap{\hskip-3pt \vrule height.4pt depth0pt
                width14.7pt}}}\hskip12.7pt}
\def\textelbow{\ \hskip.1pt\smash{\raise2.8pt%
                \hbox{\co \hskip 4.15pt\rlap{\rlap{\char'005} \char'007}
                \lower6.8pt\rlap{\vrule height3.5pt}
                \raise3.6pt\rlap{\vrule height3.5pt}}
                \raise2.8pt\hbox{%
                  \rlap{\hskip-7.15pt \vrule height.4pt depth0pt width3.5pt}%
                  \rlap{\hskip4.05pt \vrule height.4pt depth0pt width3.5pt}}}
                \hskip8.7pt}

\title {Affine Patches on Positroid Varieties and Affine Pipe Dreams}
\author {Michelle Bernadette Snider}
\conferraldate {January}{2011}
\degreefield {Ph.D.}
\copyrightholder{Michelle Bernadette Snider}
\copyrightyear{2011}

\begin{document}

\maketitle
\makecopyright

\begin{abstract}
The objects of interest in this thesis are positroid varieties in the Grassmannian, which are indexed by juggling patterns. In particular, we study affine patches on these positroid varieties. Our main result corresponds these affine patches to Kazhdan-Lusztig varieties in the affine Grassmannian. We develop a new term order and study how these spaces are related to subword complexes and Stanley-Reisner ideals.  We define an extension of pipe dreams to the affine case and conclude by showing how our affine pipe dreams are generalizations of Cauchon and \Le- diagrams.
\end{abstract}

\contentspage
\figurelistpage

\normalspacing \setcounter{page}{1} \pagenumbering{arabic}
\pagestyle{cornell} \addtolength{\parskip}{0.5\baselineskip}

\chapter{Introduction}

Given a space of matrices, we may impose various rank conditions which yield algebraic varieties with interesting geometric and combinatorial properties.
Matrix Schubert varieties are defined by putting rank conditions on the upper left submatrices of square matrices. These varieties can be indexed by permutations and have associated combinatorial diagrams called pipe dreams. In this thesis, we will be concerned with positroid varieties in the Grassmannian, defined by cyclic rank conditions, and indexed by juggling patterns rather than permutations. We generalize pipe dreams to this situation.

In particular, we will look at Schubert patches on positroid varieties, indexed by $\lambda=(\lambda_1,\ldots, \lambda_k) \in \binom{[n]}{k}$. We look at $k \times (n-k)$ matrices where we set the $\lambda_i^{th}$ column equal to the $i^{th}$ column of the identity matrix. Then we define a term order such that the initial ideal generated by the cyclic determinants is a product of all the variables. Using a juggling pattern, we put rank conditions on the resulting matrices. Our choice of term order allows us to apply results from \cite{Kfs} and \cite{Ksw} to show that the initial ideal is the Stanley-Reisner ideal of a subword complex for a particular word.  We define the affine analog of pipe dreams on an infinite strip. The main theorem of this thesis gives a geometric explanation for why the components of the initial ideals of these varieties give affine pipe dreams.

\chapter{Combinatorics Background}

We introduce the combinatorial objects that will be relevant to our main theorems, along with some interesting background and motivation. We start with permutations and affine permutations. We define juggling patterns, and correlate these to permutations. Finally we introduce diagrams called pipe dreams, to which we will return in the geometric sections. Our references for this section are \cite{Stanley} and \cite{Humphreys}, and information about heaps can be found in \cite{Vien} and \cite{Stem}.

\section{Permutations}\label{sec:perms}

A {\bf permutation} is an element of the symmetric group $$S_n=\{\pi: (1,\ldots,n) \rightarrow (1,\ldots,n) \},$$ also known as the Weyl group $A_{n-1}$. It is a Coxeter group generated by simple transpositions, $\Sigma = \{s_i=(i,i+1)$ for $i=1 \ldots n-1 \}$.  The relations are
\begin{enumerate}
\item $s_i^2 = 1$,
\item $s_i s_j =s_j s_i$ for $|i-j|>1$, and
\item $s_i s_{i+1} s_i = s_{i+1} s_i s_{i+1}$ (the braid relation).
\end{enumerate}
We also set $S_\infty = \bigcup_n S_n$, under the natural inclusion $S_n \hookrightarrow S_{n+1}$ as the stabilizer of $n+1$. We will use {\bf one-line notation} for our permutations, where we simply write a permutation $\pi \in S_n$ as the list $\pi(1) \; \pi(2)\ldots \pi(n)$. For example, $\pi = 4123$ represents the permutation $\pi(1)=4$, $\pi(2)=1$, $\pi(3)=2$, $\pi(4)=3$. The transposition $s_i$ switches places $i$ and $i+1$ when operating on the right ($\pi$ to $\pi s_i$), and switches elements $i$ and $i+1$ when operating on the left ($\pi$ to $s_i \pi) $.

A {\bf partial permutation matrix $\underline{\pi}$} is a matrix that has entries 1 and 0, with at most one nonzero entry in each row and column. We define the {\bf permutation matrix} associated to $\pi$ as the matrix that has a 1 in $(i, \pi(i))$ and 0's elsewhere. We define a {\bf (Rothe) diagram} as the boxes left in the $n \times n$ grid after we cross out all boxes south of and east of each 1 in the permutation matrix. See Figure \ref{fig:permdia}.

\begin{figure}[htbp]
\begin{center}
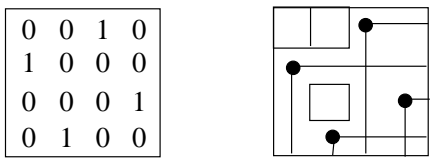
\caption{Matrix and diagram of the permutation 3142.}
\label{fig:permdia}
\end{center}
\end{figure}

Let $\pi^{-1}$ denote the {\bf inverse } of a permutation $\pi$, defined as the permutation that takes $\pi^{-1}(j)=i$ if and only if $\pi(i)=j$. Equivalently, $\pi \pi^{-1} = \pi^{-1} \pi = 1$, the identity permutation.

We have the following definitions, copied from \cite{KM}:
a \textbf{word} of size $m$ is an ordered sequence $Q=(\sigma_1,\ldots, \sigma_m)$ of elements of $\Sigma$. An ordered subsequence $P$ of $Q$ is called a \textbf{subword} of $Q$. Say $P$ \textbf{represents} $\pi \in S_n$ if the ordered product of the simple reflections in $P$ is a reduced decomposition for $\pi$. Say $P$ \textbf{contains} $\pi \in S_n$ if some subsequence of $P$ represents $\pi$.

An {\bf inversion} of $\pi \in S_n$ is a pair $(i,j)$ such that $i < j$ and $\pi(i) > \pi(j)$. The {\bf length $\ell(\pi)$} of $\pi$ is the number of inversions, and is so called since it is the length of the shortest word that represents $\pi$. This is also equal to the number of boxes in the diagram of $\pi$, since the number of boxes in row $j$ is given by $\# \{i \; | \; i<j, \; \pi(i)>\pi(j)\}$. We let $w_0$ denote the longest word, corresponding to $\pi=n \; n-1 \ldots 1)$. We write the rank of the $p \times q$ upper left submatrix of the any matrix, in particular a permutation matrix $\underline{\pi}$, as $r_{pq}(\underline{\pi}) = \# \{(i,j) \leq (p,q)\; |\; \pi(i) = j\}$, or just $r_{pq}$ if the permutation is clear from context.

We say that a word is {\bf 321-avoiding} if it has no decreasing subsequence of length 3. That is, if $w=w(1)\ldots w(m) \in S_m$, then there should not exist $1 \leq i < j<k\leq m$ such that $w(i) > w(j) > w(k)$. It has been shown in \cite{H} that a word in $S_n$ is 321-avoiding if and only if it has no reduced expression containing a substring of the form $s_i s_{i \pm 1} s_i$. We say a word $w$ is {\bf fully commutative} if one can get any reduced word from another by switching commuting generators (without braid relations). For $w \in S_n$, all reduced words for $v$ are related by just transpositions if and only if $v$ is 321-avoiding.

To any permutation, one can associate a poset (partially ordered set) called a {\bf heap}, whose vertices are labeled by simple transpositions (the letters of $w$), and such that the linear extensions of the heap encode all the reduced expressions for $w$. We can construct a heap by ``dropping" in the letters of $w$, where $i$ falls into column $i$ and if there is an $i-1$ or and $i+1$ in row $j$, then $i$ comes to a rest in row $j+1$ (where the base is row 1 and we count up). We define a {\bf wiring diagram} as a set of wires, one for each integer, where a transposition is represented by a $\times$, and the permutation is read left to right off the bottom, or bottom to top on the right, depending on the orientation of the diagram. It is straightforward to construct a wiring diagram from the heap of a permutation, as it corresponds to replacing each letter with a cross, and extending the ends of the wires north and south. See Figure \ref{fig:41523}.

\begin{figure}[htbp]
\begin{center}
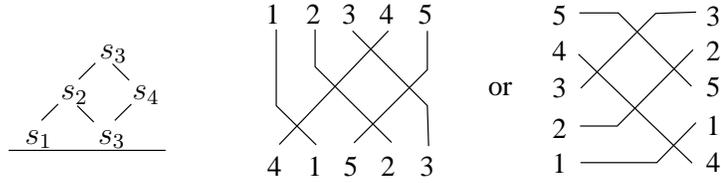
\caption{The heap and wiring diagram of $\pi=41523=s_3 s_1 s_4 s_2 s_3$.}
\label{fig:41523}
\end{center}
\end{figure}

For a general Coxeter group $W$, an element $w \in W$ is {\bf Grassmannian} if there is at most one $r_\alpha$ such that $w r_\alpha < w$ for $r_\alpha$ a simple reflection, or equivalently if it is a minimal length representative of the coset $S_n / (S_k \times S_{n-k}) $. It is {\bf bi-Grassmannian} if there is at most one $r_\alpha$ such that $w r_\alpha < w$ and at most one $r_\beta$ such that $ r_\beta w <w$ for simple reflections $r_\alpha$ and $r_\beta$. In particular, a permutation $\pi$ is Grassmannian if it has at most one descent, and bi-Grassmannian if both $\pi$ and $\pi^{-1}$ are Grassmannian. Note that the diagram of a bi-Grassmannian permutation (other than the identity) has exactly one rectangle, and this characterizes such permutations.

A {\bf partition} is a finite weakly decreasing sequence of positive integers. A given partition $\lambda=(\lambda_1,\ldots,\lambda_n)$, where $\lambda_i \geq \lambda_{i+1}$, can be represented by a {\bf Young diagram}, a collection of boxes arranged in left-justified rows where row $i$ has $\lambda_i$ boxes. We have a bijection between Grassmannian permutations of $[n]$ with a descent only after place $k$ and the set of Young diagrams $\{\lambda \subseteq (n-k)^k\}$, given as follows: rotate the Young diagram $45^\circ$ counterclockwise and draw the wiring diagram with $k$ wires going along the rows of $\lambda$ and $n-k$ wires going down the columns. Label both ends of the wires by $1,\ldots, n$ starting from the bottom. See Figure \ref{fig:youngdia}. Then this wiring diagram represents $w_\lambda$, where the wires connect index $i$ in the west with $w_\lambda (i)$ in the east. The resulting permutation is Grassmannian since it preserves the order on $1,\ldots,k$ and also on $k+1,\ldots, n$.

\begin{figure}[htbp]
\begin{center}
\scalebox{0.6}{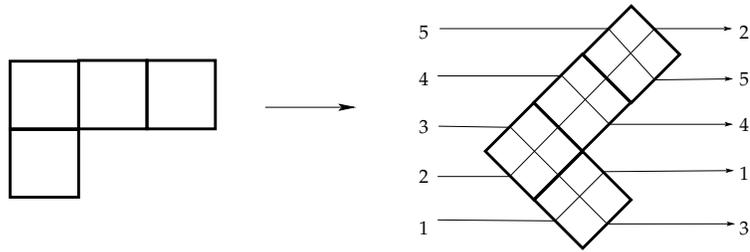}
\caption{Making the Grassmannian permutation $w_\lambda = 31452$ from Young diagram of shape $\lambda = (3,1)$.}
\label{fig:youngdia}
\end{center}
\end{figure}

We define a poset on permutations by the following partial orders. The {\bf weak Bruhat order} covering relations on $S_n$ are given by $w \succ v$ if $v=(i,i+1)w$ and $\ell(w) \leq \ell(v)$. The {\bf (strong) Bruhat order} covering relations are $w \succ v$, for $v= (ij)w$ where for $i<k<j$, $\pi(k)< \pi(i)$ or $\pi(k)>\pi(j)$ (``in between terms are not in between.")  See Figure \ref{fig:poset}. If $w< v$ in the Bruhat order, then $w>v$ in the {\bf opposite Bruhat order}.
\begin{figure}[htbp]
\begin{center}
\scalebox{0.7}{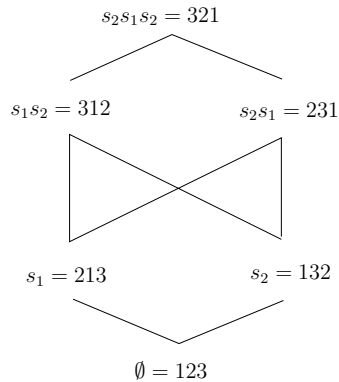}
\caption{Poset of $S_3$ to show the Bruhat order on permutations.}
\label{fig:poset}
\end{center}
\end{figure}

We say that an element in a finite poset $\mathcal{P}$ is {\bf basic} if it is not the unique greatest lower bound of the set $\{v \;|\; v>w; v,w \in \mathcal{P}\}$ (\cite{LS}). Since every non-basic element is then the unique greatest lower bound of those basic elements above, for some purposes, we need only determine properties of the basic elements, and some properties of the non-basic ones follow. We actually will consider a ``basic plus" set, that contains all the basic elements but may be larger.

\begin{Thm}\cite{LS} If $w \in W$ is basic, then it is bi-Grassmannian.\end{Thm}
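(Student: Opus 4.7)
I would prove this by contraposition: assume $w$ is not bi-Grassmannian and show that $w$ is not basic, equivalently that $w$ is the unique greatest lower bound of $\{v:v>w\}$. Since the map $w\mapsto w^{-1}$ is an order automorphism of the Bruhat order that interchanges left and right descents, after possibly replacing $w$ by $w^{-1}$ I may assume $w$ has at least two distinct right descents $s_\alpha$ and $s_\beta$.

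The workhorse is the Fulton rank-matrix characterization of Bruhat order, $u\leq v$ iff $r_{pq}(u)\geq r_{pq}(v)$ for all $(p,q)$, together with the essential-set reduction, which says that it suffices to verify this inequality on $\mathrm{Ess}(w)$ in order to conclude $u\leq w$. As recalled in the paragraph just before the theorem, $w$ is bi-Grassmannian exactly when the diagram of $w$ is a single rectangle, equivalently when $|\mathrm{Ess}(w)|\leq 1$. Under our hypothesis of two right descents the diagram has (at least) two distinct SE-corners, so $\mathrm{Ess}(w)$ contains at least two cells.

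To each essential cell $(p,q)\in\mathrm{Ess}(w)$ the Lascoux-Sch\"utzenberger theory associates a Bruhat cover $v_{(p,q)}>w$ obtained by lowering the rank on a specific staircase region anchored at $(p,q)$, leaving the rank values at the other essential cells equal to those of $w$; distinct essential cells produce distinct covers. The plan is then to exhibit $w$ as the meet of the family $\{v_{(p,q)}:(p,q)\in\mathrm{Ess}(w)\}$. For any common lower bound $u$ of this family, and any essential cell $(p,q)$, pick a different essential cell $(p',q')$ and use $u\leq v_{(p',q')}$ together with the rank-preservation property at $(p,q)$ to conclude $r_{pq}(u)\geq r_{pq}(v_{(p',q')})=r_{pq}(w)$. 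Running this over all $(p,q)\in\mathrm{Ess}(w)$ gives $r_{pq}(u)\geq r_{pq}(w)$ on the whole essential set, so by the essential-set criterion $u\leq w$. Since $\{v_{(p,q)}\}\subseteq\{v:v>w\}$, every lower bound of $\{v:v>w\}$ is already a lower bound of the subfamily and hence $\leq w$; thus $w$ is the unique greatest lower bound, contradicting the assumption that $w$ is basic.

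The main technical obstacle is the essential-cell cover construction itself: specifying the staircase region, producing a genuine permutation whose rank matrix differs from that of $w$ only on that region, and verifying that the rank values at the remaining essential cells are untouched. This is standard but careful in the Lascoux-Sch\"utzenberger/Fulton framework for matrix Schubert varieties; once it is granted, the meet computation above is essentially formal and delivers the contrapositive.
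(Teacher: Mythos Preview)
Your argument is a genuinely different route from the paper's, and in one important respect it proves less than is claimed. The theorem is stated for an arbitrary Coxeter group $W$, and the paper's proof is a two-line Coxeter-theoretic argument: if $w$ has two distinct (right) simple descents $s_{\alpha_1},s_{\alpha_2}$, then $w$ is recovered from the pair $ws_{\alpha_1},ws_{\alpha_2}$ as their unique least upper bound, so $w$ is not basic. No rank matrices, no type-$A$ combinatorics---just the exchange property. Your proof, by contrast, is entirely type~$A$: Fulton's rank matrix, essential sets, and the Lascoux--Sch\"utzenberger cover construction are all specific to $S_n$. So even if your argument goes through, it establishes the theorem only for $W=S_n$, not for general $W$.

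There is also a real gap in the construction you flag as ``the main technical obstacle.'' You claim that to each essential cell $(p,q)$ one can associate a Bruhat cover $v_{(p,q)}>w$ that lowers the rank in a staircase at $(p,q)$ while leaving the rank at every other essential cell unchanged. This is not how covers from above behave: covers of $w$ are indexed by transpositions, not by essential cells of $w$, and the region where the rank drops need not touch $\mathrm{Ess}(w)$ at all. For instance, take $w=2143$, whose essential set is $\{(1,1),(3,3)\}$; every one of its four covers $4123,2341,3142,2413$ has $r_{11}=0$ and $r_{33}=2$, the same as $w$, so no cover ``lowers the rank at $(1,1)$'' (indeed $r_{11}(w)=0$ cannot be lowered). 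What you actually need---and what this example does supply---is, for each essential cell $(p,q)$, \emph{some} $v>w$ with $r_{pq}(v)=r_{pq}(w)$; your meet computation then goes through. But that existence statement is not what you asserted, and proving it in general (for $S_n$) still requires work you have not done. The paper sidesteps all of this by arguing on the other side of $w$, with the two simple descents, which is both shorter and type-independent.
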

\begin{proof} We prove the contrapositive: an element $w$ is not basic if and only if there exists $S \subseteq W$ where for all $s < w$, $s \in S$, the unique least upper bound of the elements in $S$ is $w$. An element $w$ is not bi-Grassmannian if there exist $r_{\alpha_1}$ and $r_{\alpha_2}$ such that $wr_{\alpha_1} < w $ and $wr_{\alpha_2} < w $. Say there exists $w' \in W$ such that $wr_{\alpha_1} < w' $ and $wr_{\alpha_2} < w' $. Then $w'= w r_{\alpha_1}r_{\alpha_i}=w r_{\alpha_2}r_{\alpha_j}$ for some $i,j$, implies that $r_{\alpha_2}r_{\alpha_1} = r_{\alpha_j}r_{\alpha_i}$, so $j=2$ and $i=1$. Then $w'=w$, the unique least upper bound of the elements $wr_{\alpha_1}$ and $wr_{\alpha_2}$, and thus not basic.
\end{proof}

An {\bf affine permutation} is an element of $$\hat{S_n}=\{\pi: \Z \rightarrow \Z  \mid \pi(i+n)=\pi(i)+n \; \forall \; i, \pi \textrm{ bijective}\}$$ In the case  that $\sum_i (\pi(i)-i) = 0$, we have the affine Weyl group $\widetilde{A_{n-1}}$. This is a Coxeter group (hence, with a Bruhat order) generated by $s_0,s_1,\ldots, s_{n-1}$, where
\begin{displaymath}
s_i(k) = \left\{ \begin{array}{ll}
k+1 & \textrm{if } k\equiv i \mod  n\\
k-1 & \textrm{if } k\equiv i+1 \mod n\\
k & \textrm{otherwise }
\end{array} \right.
\end{displaymath}
 Recall that the length of an element of a Coxeter group (affine or not) $\ell(\sigma)$ is the smallest integer $r$ such that we can write $\sigma$ as a product of $r$ simple reflections. In the $\widetilde{A_{n-1}}$ case, a formula for $\ell(\sigma)$ is given in \cite{Shi}:
\begin{displaymath} \ell(\sigma) = \sum_{1 \leq i < j \leq n} \left \lfloor \frac{|\pi(j)-\pi(i)|}{n} \right\rfloor
 \end{displaymath}
We also use one-line notation for affine permutations, where we just write one period $\pi(1) \ldots \pi(n)$, as the action of the permutation on any integer can be reconstructed using $\pi(i+n)=\pi(i)+n$.

For $\lambda \subseteq \binom{[n]}{k}$, let $bitstring(\lambda)=(c_1,\ldots, c_n)$ be the string where $$c(i)=\left\{ \begin{array}{ll}
1 & \textrm{if } i \in \lambda \\
0 & \textrm{if } i \notin \lambda
\end{array} \right.$$ We can associate to this a juggling pattern $f$ by $$(c_1,\ldots,c_n) \mapsto \{ f(i) = i+n c_{(i \mod n)} \}.$$
We have the following split exact sequence: 
$$ 1 \rightarrow \Z^n \rightarrow \hat{S_n} \rightarrow S_n \rightarrow 1,$$ so that $\hat{S_n} \cong S_n \ltimes \Z^n$. Also, $\hat{S_n} = \hat{A_{n-1}} \times <\{f(i)=i+1\}>$, and we use this to put a Bruhat order on each coset of $\hat{A_{n-1}}$.

\pagebreak

\section{Juggling Patterns}\label{sec:jp}

Our references for this section are \cite{KLS}, and in particular for juggling, \cite{Polster}.

We define the set of {\bf virtual juggling patterns} as the set of affine permutations, $$\{f: \Z \ra \Z \mid \forall \; i, f(i+n) = f(i) + n, f \textrm{ bijective}\}.$$ We can represent a juggling pattern as a {\bf siteswap}, a list of $n$ numbers representing the lengths of the throws $f(i)-i$. We need only list one cycle, with the understanding that the list of throws repeats both backwards and forwards in time. For example, the standard 3-ball cascade is represented by the siteswap $3$ for the list of throws $\ldots,3,3,3,\ldots$. Note that we allow a ball to travel either forwards or backwards in time, that is $f(i)-i$ is allowed to be negative. The latter case can be interpreted physically as an antimatter ball! It is standard lore among jugglers that
\begin{Thm} $k := \textrm{avg}(f(i)-i)$ is an integer, the number of balls (minus antiballs) in the pattern, \end{Thm}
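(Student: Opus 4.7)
The plan has two parts: show $k$ is an integer, then identify $k$ with the signed ball count.

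For integrality, the key observation is that $\{f(1), \ldots, f(n)\}$ forms a complete system of residues modulo $n$. Indeed, if $f(i) \equiv f(j) \pmod{n}$ with $1 \le i < j \le n$, then $f(j) - f(i) = mn$ for some $m \in \Z$, and the periodicity $f(i+mn) = f(i) + mn$ rewrites this as $f(j) = f(i + mn)$. Injectivity of $f$ then forces $j = i + mn$, which is impossible since $0 < j - i < n$. Hence $\sum_{i=1}^n f(i) \equiv \sum_{i=1}^n i \pmod{n}$, so $k = \tfrac{1}{n} \sum_{i=1}^n (f(i) - i) \in \Z$.

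For the ball interpretation, I would define the signed ball count at half-integer time $\tau$ by
\[ B(\tau) = \#\{i \in \Z : i < \tau < f(i)\} - \#\{i \in \Z : f(i) < \tau < i\}, \]
both sets being finite by periodicity and the boundedness of $f(i)-i$ on $i \in [1,n]$. I would first show that $B(\tau)$ is independent of $\tau$ by tracking how indicators flip as $\tau$ crosses each integer $t$: only $i = t$ (the throw at time $t$, contributing $+1$ to $B(t+\tfrac{1}{2}) - B(t-\tfrac{1}{2})$ whether the throw is forward or backward in time) and $i = f^{-1}(t)$ (the catch, contributing $-1$ similarly) can have flipping indicators, and these cancel; the degenerate case $f(t) = t$ has both contribute zero.

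To evaluate the constant $B$, I would average over one period. Summing $B(\tau)$ over $\tau \in \{\tfrac{1}{2}, \tfrac{3}{2}, \ldots, n - \tfrac{1}{2}\}$ and swapping summation order, each $i \in \Z$ contributes $\mathrm{sgn}(f(i)-i)$ times the number of half-integers of that window lying strictly between $i$ and $f(i)$. Partitioning $i$ into residue classes $i' \in \{1, \ldots, n\}$ modulo $n$ and summing over shifts $i = i' + mn$, the crucial fact is that the $n$ half-integers of the window form a system of representatives modulo $n$ for all half-integers. So the inner sum over $m$ collapses to the count of half-integers in the single open interval between $i'$ and $f(i')$, which is $|f(i')-i'|$. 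Multiplying by the sign and summing over $i'$ yields $\sum_{i'=1}^n (f(i') - i') = nk$, and hence $B = k$.

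The main obstacle is the averaging collapse in the last step: the inner sum over shifts telescopes only because of the residue-class structure of the window, and careful bookkeeping is needed to handle antiball arrows and the open-interval boundary conventions uniformly with the matter case.
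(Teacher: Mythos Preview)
Your proof is correct. The paper does not actually prove this theorem: it introduces the statement as ``standard lore among jugglers'' and moves on, later citing Polster's book for the related Average Theorem in the non-virtual case. So there is nothing to compare against.

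Your integrality argument via the complete residue system is the standard one. The second half is also sound: the constancy of $B(\tau)$ is checked correctly (the throw at $t$ always contributes $+1$ and the catch at $t$ always contributes $-1$ to the increment $B(t+\tfrac12)-B(t-\tfrac12)$, regardless of arrow orientation), and the averaging trick works exactly as you describe. The key step is the observation that as $\tau$ ranges over $\{\tfrac12,\ldots,n-\tfrac12\}$ and $m$ over $\Z$, the values $\tau-mn$ hit every half-integer exactly once, so the inner sum collapses to $|f(i')-i'|$. Your bookkeeping handles the antiball case uniformly because the sign is carried outside and $\mathrm{sgn}(f(i')-i')\cdot|f(i')-i'| = f(i')-i'$.

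One small remark: the finiteness of the two sets defining $B(\tau)$ follows immediately from $|f(i)-i|$ being bounded (by periodicity it takes only $n$ values), so only finitely many $i$ can have an arc spanning $\tau$. You mention this but could state it in one line rather than flagging it as an obstacle.
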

so we can speak of a {\bf k-ball virtual juggling pattern}. If we restrict the virtual juggling patterns to those where $f(i) \geq i$, that is that we require that balls land after they are thrown, we refer to them simply as {\bf juggling patterns}. We add the condition that $\{ i \leq f(i) \leq i+n\}$ to get the finite set of {\bf bounded juggling patterns}. Note that neither forms a subgroup of $\hat{S_n}$.

One way to visually represent a juggling pattern is as a series of arcs connecting $i$ with $f(i)$ for all $i$. From this, it is easy to see that for each time $i$, one ball is caught and one ball is thrown. We call the special case when $f(i)=i$ a throw of length 0, or a {\it empty hand}. (A throw of length 2 is a {\it hold}, but we will not need this concept.)
\begin{figure}[htbp]
\begin{center}
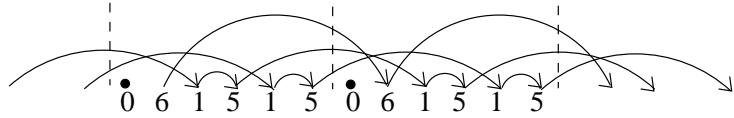
\caption{The siteswap $061515$.}
\label{fig:061515}
\end{center}
\end{figure}

The {\bf state} of a juggling pattern at time $i$ is the finite set $$\{ j \in \N \mid f^{-1}(i+j) \leq i \}.$$
At any given time, we can record the state of a juggling pattern as a list of $\times$'s representing the set of future times that the balls currently in the air will land, and $-$'s at times when no ball lands. For example $\times-\times - - \cdots$ means one ball is in the juggler's hand, and one ball is in the air that will land 2 counts from now; for simplicity we just write $\times-\times$. See Figure \ref{fig:24patterns} for all the possible states of $k=2$ balls with throws $f(i)-i \leq 4$. The arrows indicate what throws $f(i)-i$ can be made from each state. A bounded juggling pattern is a length $n$ cycle in this: for example, $4040$, $3022$, and $1304$. Note that if a state starts with a $-$, the only option is to wait one count for a ball to land, and if it starts with a $\times$, the next throw must land in an existing $-$.
\begin{figure}[htbp]
\begin{center}
\scalebox{0.8}{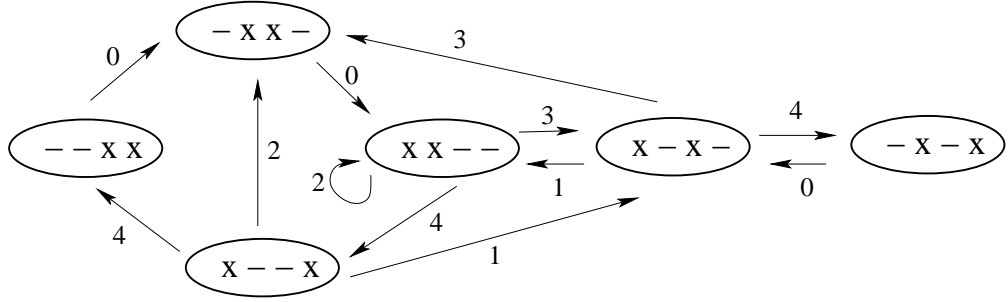}
\caption{State diagram for $n=4$ and $k=2$.}
\label{fig:24patterns}
\end{center}
\end{figure}

The {\bf ground state} for $k$ balls and length $n$ is $\times \times \cdots \times - - \cdots -$, with $k \; \times $'s and $(n-k) \; -$'s. We say that a juggling pattern is a {\bf ground state pattern} if its initial state is the ground state, or equivalently if one can add $kk\ldots k$ at the beginning. For example, the siteswap 566151 is ground state, but 661515 is not, since in 4444661515, 2 balls land at time 10 and it is therefore not a valid siteswap. We can construct a new pattern $f'$ from an existing pattern $f$ by taking a pair $(i,j)$, where $i<j$ and $f(i) > j$, and making $i$ a throw of length $j-i$, and $i+f(i)$ a throw of length $j+f(j)-i-f(i)$. Physically, this is equivalent to swapping the spot where ball $i$ lands with the spot where $j$ lands.

\begin{Lem}\cite[The Average Theorem \S 2.4 ]{Polster} The number of balls in the (non-virtual) juggling pattern $\pi$ is given by the formula  $k = \#\{ i \; | \; \pi(i) < i\}$, for any $i$.
\end{Lem}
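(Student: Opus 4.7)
The plan is to read the formula as asserting $k = |A_i|$ for every integer $i$, where $A_i := \{j \in \Z : j \leq i < \pi(j)\}$ is the ``balls in flight at time $i$'' count. (As literally written, $\pi(i) < i$ cannot hold since $\pi(i) \geq i$ on a non-virtual pattern, so I take the ``for any $i$'' to indicate that $|A_i|$ is a well-defined invariant of $\pi$, namely the cardinality of the state $\{j \in \N : \pi^{-1}(i+j) \leq i\}$ defined just above the lemma, via the substitution $m = \pi^{-1}(i+j)$.)

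First I would prove that $|A_i|$ is independent of $i$ by a one-step comparison. Moving from $A_i$ to $A_{i+1}$, the only possible new element is $j = i+1$, which enters precisely when $\pi(i+1) > i+1$; the only possible lost element is $j = \pi^{-1}(i+1)$, which leaves precisely when $\pi^{-1}(i+1) \leq i$. The non-virtual hypothesis $\pi(m) \geq m$ implies dually $\pi^{-1}(m) \leq m$, while $\pi(i+1) = i+1 \iff \pi^{-1}(i+1) = i+1$. Consequently the two conditions ``$\pi(i+1) > i+1$'' and ``$\pi^{-1}(i+1) \leq i$'' are equivalent, so the ``enter'' and ``leave'' events occur together; hence $|A_{i+1}| = |A_i|$. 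Call this common value $N$.

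Second I would identify $N$ with $k$ by double counting the set $\Omega = \{(i,j) \in \Z^2 : j \leq i < \pi(j)\}$ modulo the shift $(i,j) \mapsto (i+n, j+n)$, which preserves $\Omega$ by the affine periodicity $\pi(j+n) = \pi(j)+n$. Each $\Z$-orbit has a unique representative with $j \in [1,n]$ (since $j \bmod n$ is an orbit invariant), giving the count $\sum_{j=1}^n (\pi(j) - j) = nk$ by the Average Theorem stated just before the lemma. Each orbit also has a unique representative with $i \in [1,n]$, giving the count $\sum_{i=1}^n |A_i| = nN$ by Step~1. Equating the two counts yields $N = k$.

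The main obstacle is the interpretive one up front: pinning down what ``$\#\{i \mid \pi(i) < i\}$'' is meant to count in a pattern where $\pi(i) \geq i$ for every $i$. Once the count is reformulated as the state size, the combinatorics is transparent and both steps are short bookkeeping arguments, with the only technical subtlety being finiteness of $|A_i|$---immediate for bounded juggling patterns, and in general following from the absolute convergence of $\sum (\pi(j)-j)$ over one period.
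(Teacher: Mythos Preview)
The paper does not supply its own proof of this lemma: it is stated with a citation to \cite{Polster} and no proof environment follows. So there is no argument in the paper to compare against.

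That said, your approach is sound and self-contained. The interpretive difficulty you flag is real---as literally written, $\#\{i : \pi(i) < i\}$ vanishes for any non-virtual pattern---and your reading of it as the state cardinality $|A_i| = \#\{j : j \leq i < \pi(j)\}$ is the natural one (and matches the paper's definition of \emph{state} immediately preceding the lemma). Your Step~1 correctly shows $|A_{i+1}| = |A_i|$ by checking that the unique candidate for entry ($j = i+1$) and the unique candidate for exit ($j = \pi^{-1}(i+1)$) either both occur or both fail. Your Step~2 double-counts orbits of the shift on $\{(i,j) : j \leq i < \pi(j)\}$, recovering $nk$ one way and $nN$ the other; this is clean and standard. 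One small remark: finiteness of $|A_i|$ does not require any convergence argument---since $\pi(j) \geq j$ for all $j$ in the non-virtual case, $A_i \subseteq \{j : j \leq i\}$ and $A_i \subseteq \{j : \pi(j) > i\}$, and the latter combined with $\pi(j) \leq j + n$ (boundedness) or just injectivity of $\pi$ on any finite window gives the bound directly.
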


For a permutation to be ground state, no ball thrown in the first $k$ spots can land in the first $k$ spots, as this would cause two balls to land at the same time when $k, \ldots,k$ is concatenated at the beginning. We can characterize those permutations corresponding to ground state juggling patterns by the following conditions: $k+1$ has to be in the first $k$ spots, $k+2$ in first $k+1$ spots, and so on. That is, in the first $k$ spots, all throws are greater than $ k$. Note that the ground state requires that if a fixed point of the associated finite permutation $\pi_f(i)=f(i)-i$ is in first $(n-k)$- spots, it is an $n$-throw, while if it occurs in the last $k$ spots, it is a 0-throw.

In the permutation $\pi$, consider $i,j$ satisfying $i<j$ and $\pi(j) < \pi(i)$, and having no arcs that start between $i$ and $j$ and end between $\pi(j)$ and $\pi(i)$. Then the Bruhat order on juggling patterns corresponds to: $\pi>\pi'$  if $\pi'(i)=\pi(j)$ and $\pi'(j)=\pi(i)$, where the covering relations require that there are no arcs starting between $i$ and $j$ and ending between $\pi(i)$ and $\pi(j)$. Graphically, this is shown in Figure \ref{fig:jugglingBruhat}. From this we see that in the case of a bounded juggling pattern $f$, the length of the associated affine permutation $\pi_f$ is the number of pairs of nested arcs.

\begin{figure}[htbp]
\begin{center}
\scalebox{0.4}{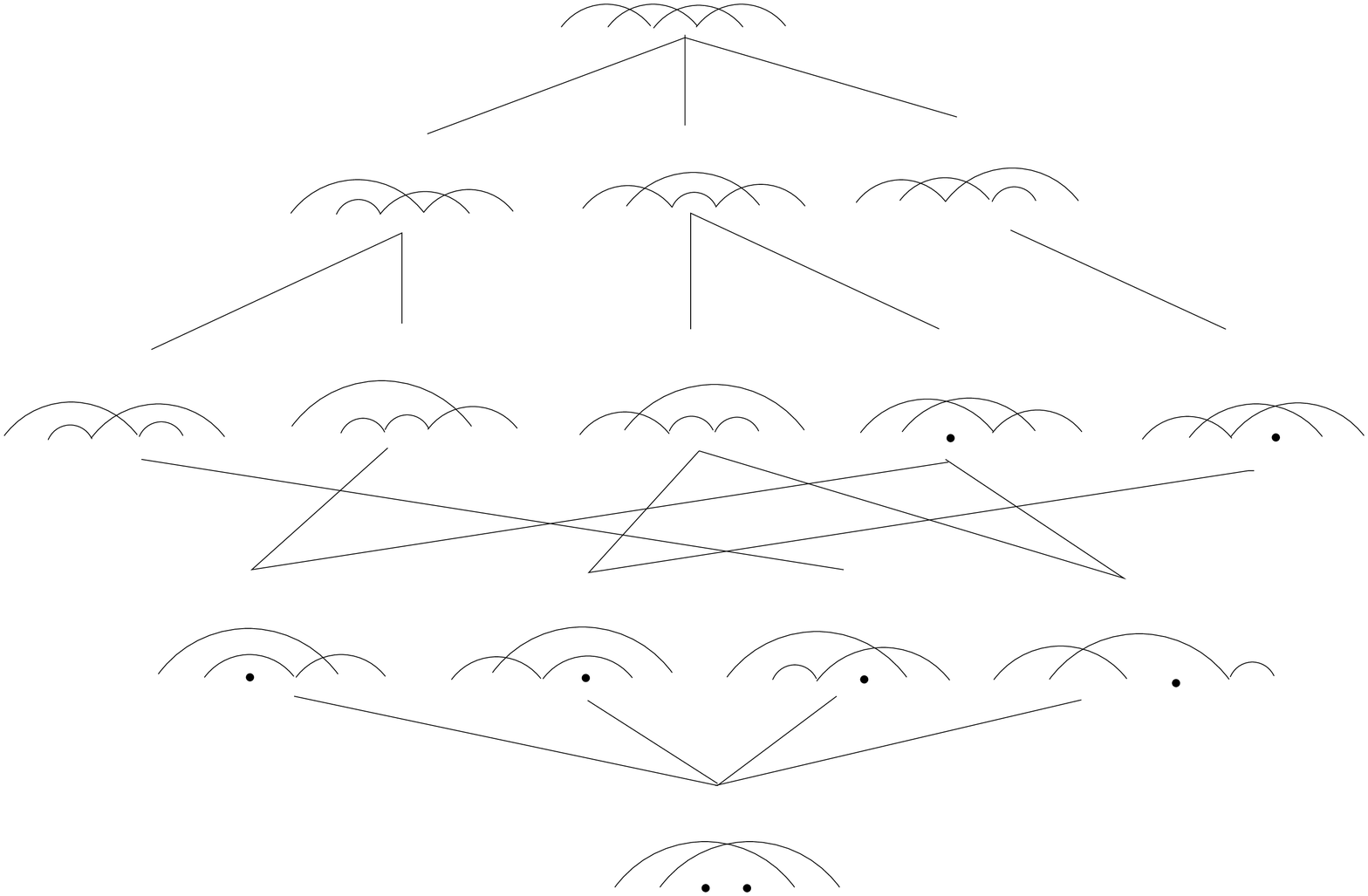}
\caption{The Bruhat order on bounded juggling patterns for $k=2$ and $n=4$.}
\label{fig:jugglingBruhat}
\end{center}
\end{figure}

\section{Pipe Dreams}\label{sec:pipedreams}

Our references for this section are \cite{BB} and \cite{MS}.

 A {\bf pipe dream} in $S_n$ is a diagram in an $n \times n$ square where each box is one of two tiles, elbows $\textelbow$ and crosses $\textcross$, such that all crosses occur above the southwest-northeast diagonal. Then we can think of the tiled grid as a set of pipes that begin on the north and east edges, and end on the west and south edges of the square, where the east to south pipes are always the same. We say a pipe dream is {\bf reduced} if no two pipes cross more than once. In this thesis, we will primarily be concerned with reduced pipe dreams, and will abuse definitions and use {\it pipe dream} to mean {\it reduced pipe dream} unless otherwise specified. Associated to each pipe dream is a permutation, which can be read off the diagram as follows: label the edges across the north side with $1,\ldots, n$, and label the edges down the west side with the same. Then follow each pipe from the north edge to the west edge, and label the end of the pipe with the same number. Then reading down the west side (from north to south) gives the associated permutation. Note that all the tiles in the lower triangle are elbows, so for simplicity we don't draw them. Pipe dreams were developed in \cite{BB}, under the name {\it RC-graphs}, to compute Schubert polynomials. Figure \ref{fig:2143pd} shows all the pipe dreams for the permutation $\pi=2143$.

 \begin{figure}[htbp]
\begin{center}
\scalebox{0.5}{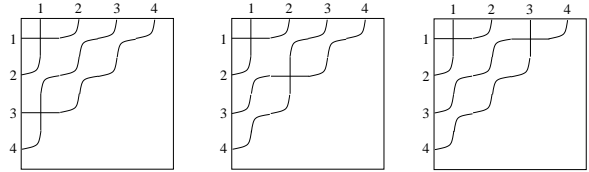}
\caption{All the (reduced) pipe dreams for the permutation $\pi=2143$.}
\label{fig:2143pd}
\end{center}
\end{figure}

For a given $\pi$, there is at least one pipe dream that gives the permutation, and there may be many. We use $\mathcal{RP}(\pi)$ to denote the set of all reduced pipe dreams of $\pi$. We consider two operations on a pipe dream that preserve the permutation: as in \cite{BB}, for $P$ a pipe dream, a {\bf ladder move} $\mathcal{L}_{ij}$ produces the diagram $P \bigcup {(i-m,j+1)} \backslash {(i,j)}$, as in Figure \ref{fig:chutesladders}. Note that for a ladder move, the two columns are adjacent but the number of rows is arbitrary. A {\bf chute move} $\mathcal{C}_{ij}$ is the transpose of a ladder move. We call the inverses of these moves the {\bf inverse chute} and {\bf inverse ladder} moves. We let $\mathcal{C}(D)$ be the set of pipe dreams that can be obtained from $D$ by a sequence of chute moves, and $\mathcal{L}(D)$ the same for ladder moves.

\begin{figure}[htbp]
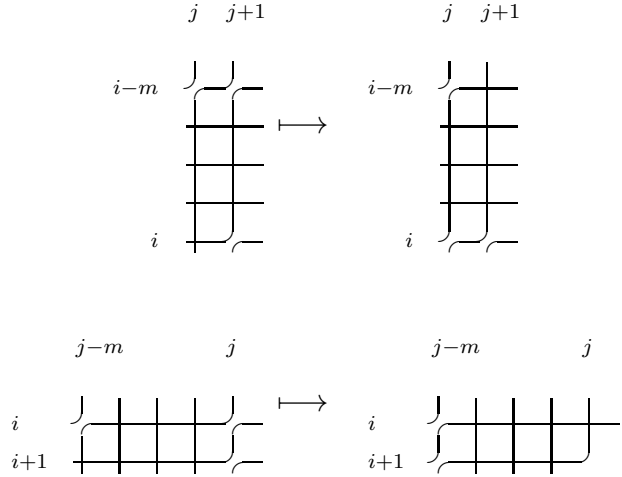

\begin{center}
\begin{eqnarray*} {\begin{array}{lccc}
 &\phperm & \perm{j}{} & \perm{j+1}{}\\
\phperm&\phperm &\phperm \\
\perm{i-m}{} && \jr & \jr \\
&\phperm& \+ & \+ \\
&\phperm& \+ & \+ \\
&\phperm& \+ & \+ \\
&\perm{i }{} & \+ & \jr \\
     \end{array}} & \longmapsto &
{\begin{array}{lccc}
 &\phperm& \perm{j}{} & \perm{j+1 }{}\\
&&\phperm &\phperm \\
\perm{i-m \; }{} && \jr & \+ \\
 && \+ & \+ \\
& & \+ & \+ \\
 && \+ & \+ \\
& \perm{i}{} & \jr & \jr \\
 \end{array}} \\
&&\\
&&\\
    {\begin{array}{rcccccc}
    &&\perm{j-m}{}  & & & & \perm{j}{}\\
    &&\phperm &\phperm &\phperm &\phperm &\phperm \\
    \perm{i}{}&&\jr & \+ & \+ & \+ & \jr \\
    \perm{i+1}{}&  & \+ & \+ & \+ & \+ & \jr \\  \end{array} }
    &\longmapsto &
    {\begin{array}{rcccccc}
    &&\perm{j-m }{} & & & & \perm{j}{} \\
    &&\phperm &\phperm &\phperm &\phperm &\phperm\\
    \perm{i}{}&&\jr & \+ & \+ & \+ & \+ \\
    \perm{i+1}{} & & \jr & \+ & \+ & \+ & \je \\ \end{array}}
\end{eqnarray*}
\caption{Ladder (above) and chute (below) moves.}\label{fig:chutesladders}
\end{center}
\end{figure}

\begin{Lem}\cite[Lemma 3.5]{BB} Ladder and chute moves preserve the permutation associated with a pipe dream. \end{Lem}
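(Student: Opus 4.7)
The plan is to exploit the strictly local character of both moves: a ladder (or chute) move alters tiles only inside a fixed $(m{+}1)\times 2$ rectangle (respectively $2\times(m{+}1)$), and leaves every other tile of the pipe dream untouched. Consequently, any pipe that does not enter this rectangle is routed identically before and after, so it suffices to verify that each pipe which does enter the rectangle exits at the same boundary edge in the two configurations. Once that is shown, the boundary identification extends each pipe to an unambiguous global route, and the induced labelling on the west edge is unchanged.

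Concretely, for the ladder move I would label the rectangle with columns $j,j+1$ and rows $i{-}m,\ldots,i$, and enumerate the pipes entering on its north and east boundary: one pipe $A$ entering at the top of column $j$, one pipe $B$ entering at the top of column $j{+}1$, pipes $C_{i-m},\ldots,C_i$ entering from the east on each row, and follow each one using the rule that an elbow $\jr$ realizes N-W and E-S connections while a cross $\+$ transmits both strands straight through. In the ``before'' picture one checks that $A$ exits at the west of row $i{-}m$, $B$ exits at the bottom of column $j$, $C_{i-m}$ exits at the west of row $i$, $C_i$ exits at the bottom of column $j{+}1$, and each intermediate $C_r$ exits at the west of its own row $r$. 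In the ``after'' picture the same trace produces exactly the same five exit points; the only change is that the two non-trivial pipes now swap which corner of the rectangle they turn in, but the boundary-to-boundary pairing they induce is identical.

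The chute move then follows immediately by transposing the argument, since transposition interchanges the two elbow types and swaps rows with columns, preserving the N-to-W connectivity that defines the associated permutation. I expect the only real subtlety to lie in being careful at the two ``corner'' rows $i{-}m$ and $i$, where an elbow replaces a cross (or vice versa) and one has to confirm that the rerouted strand threads correctly through the column of crosses between them; everything else reduces to the observation that a vertical (or horizontal) column of crosses simply transmits its strands unchanged.
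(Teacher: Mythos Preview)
Your argument is correct: the local boundary-connectivity check you describe is exactly the standard way to verify this, and your enumeration of the five exit points for the ladder move is accurate (I traced each one and they match in both configurations). The chute case does indeed follow by transposition.

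Note, however, that the paper itself does not prove this lemma at all; it is simply cited from \cite[Lemma~3.5]{BB} without argument. So there is no ``paper's own proof'' to compare against. What you have written is essentially the proof given in the original Bergeron--Billey paper, which also proceeds by direct inspection of the local picture. Your version is perhaps slightly more systematic in listing the boundary pipes explicitly, whereas \cite{BB} leaves more of the trace to the reader, but the underlying idea is identical.
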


 We will make use of two distinguished pipe dreams: the {\bf bottom pipe dream}  $$D_{bot} (w) := \{(i,c) \; | \;c \leq m_{i} \}$$ where $m_i = \{j\; | \; j >i \text{ and } w_j < w_i \}$. Graphically, this pipe dream can be found by taking the permutation diagram, shoving all the blocks west, then replacing them with crosses and filling the rest of the diagram with elbows. Similarly, we have the {\bf top pipe dream}, where $D_{top} (w) = D_{bot}^t (w^{-1})$, where $t$ denotes transpose (\cite{BB}). Graphically, this corresponds to taking the permutation diagram, shoving all the blocks north and replacing them with crosses.
\pagebreak
 \begin{Thm}\cite[Theorem 3.7]{BB} Let $w \in S_\infty$. Then,
\begin{enumerate}
\item $D_{top}(w)$ does not admit an inverse chute.
\item Any element of $\mathcal{RP}(w)$ other than $D_{top}(w)$ admits an inverse chute.
\item $\mathcal{C}(D_{top}(w)) = \mathcal{RP}(w) = \mathcal{L}(D_{bot}(w))$.
\end{enumerate}
\end{Thm}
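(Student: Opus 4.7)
The plan is to prove the three assertions sequentially, treating (2) as the combinatorial core and deducing (1) and (3) from it via structural arguments.

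For (1), I would argue from the extremal characterization of $D_{top}(w)$, whose crosses occupy top-justified column blocks inherited from the Rothe diagram of $w$. Direct inspection of the local pattern required for an inverse chute at rows $i, i+1$ and columns $j-m, \ldots, j$ --- elbows at $(i, j-m)$, $(i+1, j-m)$, and $(i+1, j)$, a cross at $(i, j)$, and crosses at both rows $i$ and $i+1$ of every intermediate column --- shows the pattern is incompatible with the column-justified structure of $D_{top}(w)$: such a configuration would force column-height relations and tile-type constraints at the northeast corner of the rectangle that a Rothe diagram pushed north cannot support.

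For (2), the main combinatorial content, I construct an explicit inverse chute. Given $P \in \mathcal{RP}(w)$ with $P \neq D_{top}(w)$, both pipe dreams carry the same number of crosses in each column (a standard feature of reduced pipe dreams representing a common permutation). Locate a minimal column $j$ where $P$'s crosses fail to be top-justified --- there is a row $i$ with $(i, j) \notin P$ but $(i'', j) \in P$ for some $i'' > i$ --- and claim the inverse-chute pattern is present at rows $i, i+1$ of $P$, with column span $j-m, \ldots, j$ determined by the smallest $m$ such that columns $j-m+1, \ldots, j-1$ carry crosses at both rows $i$ and $i+1$. The minimality of $j$ together with reducedness of $P$ (invoked via a pipe-tracing argument to rule out spurious elbows in the intermediate columns) forces such an $m$ to exist, and applying the inverse chute yields $P' \in \mathcal{RP}(w)$ strictly closer to $D_{top}(w)$ in the statistic $|P| := \sum_{(i,j) \in P} i$.

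For (3), a terminating-descent argument combines the previous parts. Each inverse chute alters $|P|$ by a fixed sign (a direct computation from Figure~\ref{fig:chutesladders}), so starting from any $P$ and iterating inverse chutes produces a sequence strictly monotone in $|P|$ that must terminate in finitely many steps at a pipe dream admitting no inverse chute. By (1) and (2) this terminal pipe dream is $D_{top}(w)$; reversing the chain realizes $P$ as a chute-move descendant of $D_{top}(w)$, giving $\mathcal{RP}(w) \subseteq \mathcal{C}(D_{top}(w))$. The reverse inclusion is the preceding Lemma~3.5 from \cite{BB}. The ladder identity $\mathcal{L}(D_{bot}(w)) = \mathcal{RP}(w)$ follows by transposing and passing to $w^{-1}$, using $D_{bot}(w) = D_{top}(w^{-1})^t$ and that ladder moves are transposes of chute moves. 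The principal obstacle is the intermediate-columns verification in (2): a careful pipe-tracing argument exploiting the unique-crossing property of pipe pairs in a reduced pipe dream is the natural tool, and the rest of the proof reduces to direct verification or monotonicity.
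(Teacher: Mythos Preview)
The paper does not prove this theorem; it is quoted from \cite{BB} as background and followed only by a one-sentence paraphrase and a reference to Figure~\ref{fig:topbottom}. There is thus no proof in the paper to compare against --- your proposal is effectively a reconstruction of the Bergeron--Billey argument, and your overall architecture (establish (1) and (2) directly, deduce (3) by a monotone-statistic termination using $|P| = \sum_{(i,j)\in P} i$, then obtain the ladder identity by transposing to $w^{-1}$) is the same as theirs.

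That said, your step (2) contains a genuine error. You assert that any two reduced pipe dreams for $w$ carry the same number of crosses in each column, calling this ``a standard feature of reduced pipe dreams representing a common permutation.'' This is false: a chute move shifts a cross between columns $j-m$ and $j$, so column multiplicities are not invariant. Concretely, the three reduced pipe dreams for $w=2143$ place their $s_3$-cross in columns $3$, $2$, $1$ respectively. What you actually need is the weaker statement that a reduced pipe dream all of whose columns are top-justified must equal $D_{top}(w)$. This is true and easy: transposing such a $P$ gives a left-justified reduced pipe dream for $w^{-1}$, whose row lengths are then forced to be the Lehmer code of $w^{-1}$, so $P^t = D_{bot}(w^{-1})$ and $P = D_{top}(w)$. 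With that repair your location of a non-top-justified column in any $P\neq D_{top}(w)$ goes through; the remaining pipe-tracing argument that the inverse-chute rectangle really materialises at the chosen $(i,j)$ still needs more care than you indicate (in particular, pinning down $i$ as the \emph{least} row with an elbow sitting above a cross in column $j$, and using reducedness to rule out stray elbows in the intermediate columns), but the strategy is the right one.
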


That is, every reduced pipe dream for $\pi$ can be obtained by chute and ladder moves on the bottom pipe dream, or equivalently by reverse chute and reverse ladder moves on the top pipe dream. See Figure \ref{fig:topbottom}.

\begin{figure}[htbp]
\begin{center}
\scalebox{0.7}{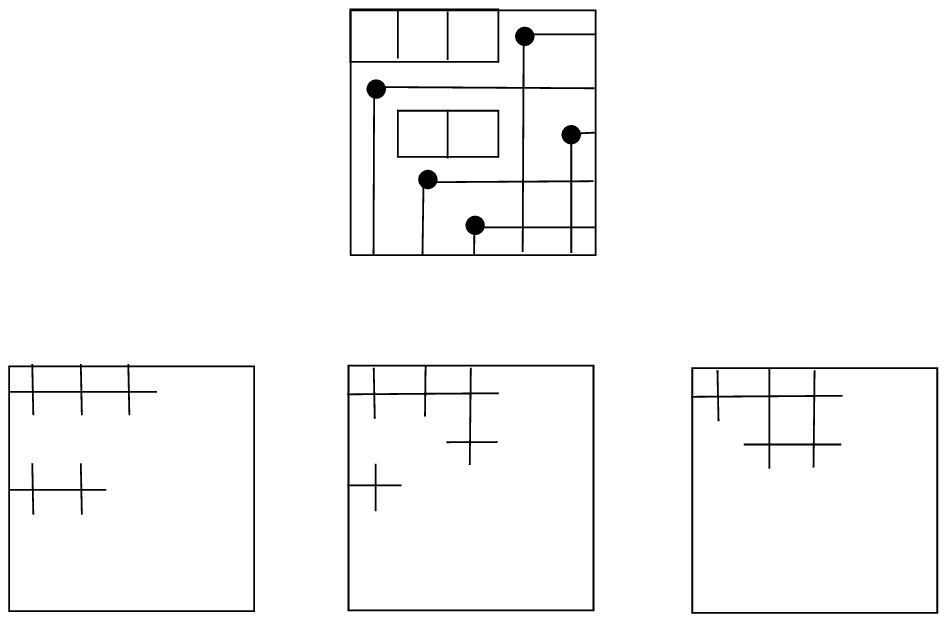}
\caption{For $\pi=41523$, the leftmost pipe dream is $D_{bot}$ and the rightmost is $D_{top}$. We have omitted elbows for clarity.}\label{fig:topbottom}
\end{center}
\end{figure}

An {\bf antidiagonal} is a subset $A \subseteq [n] \times [n]$ such that no element is (weakly) southeast of another: $(i,j) \in A$ and $(i,j) \leq (p,q) \Rightarrow (p,q) \notin A$. Consider the union over all $1 \leq p,q \leq n$ of the set of antidiagonals in $[p] \times [q]$ of size $1+r_{pq}(w)$. Then we define $\mathcal{A}_w$ to be the set of minimal elements under inclusion of this union. Recall that $\mathcal{RP}_w$ is the set of all reduced pipe dreams for $w$. Given a collection $\mathcal{C}$ of subsets of $[n] \times [n]$, a {\bf transversal} to $\mathcal{C}$ is a subset of $[n] \times [n]$ that meets every element of $C$ at least once. The {\bf transversal dual} of $\mathcal{C}$ is the set $\mathcal{C}^v$ of all minimal transversals to $\mathcal{C}$.

\begin{Thm}\cite{KM} and \cite[Theorem 3]{JM} For any permutation $\pi$, the transversal dual of the set $\mathcal{RP}_\pi$ of reduced pipe dreams for $\pi$ is the set $\mathcal{A}_\pi$ of antidiagonals for $\pi$. \end{Thm}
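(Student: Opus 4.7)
The plan is to deduce the statement from the Gr\"obner basis theorem of Knutson--Miller for Schubert determinantal ideals, together with the classical fact that transversal duality is an involution on clutters.

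First, I would recall the Knutson--Miller theorem: with respect to any antidiagonal term order, the antidiagonal monomials $\prod_{(i,j)\in A} x_{ij}$ for $A \in \mathcal{A}_\pi$ form a Gr\"obner basis of the Schubert determinantal ideal $I_\pi$. Hence the initial ideal $\init I_\pi$ is the squarefree monomial ideal generated by exactly these antidiagonal monomials. Via the Stanley--Reisner dictionary, this initial ideal corresponds to a simplicial complex $\Delta$ on the ground set $[n]\times [n]$ whose minimal non-faces are precisely the sets in $\mathcal{A}_\pi$; equivalently, a subset $F\subseteq [n]\times[n]$ is a face of $\Delta$ iff it contains no $A\in \mathcal{A}_\pi$.

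Next, I would invoke the second half of Knutson--Miller, which identifies $\Delta$ with the reduced pipe dream complex of $\pi$: a set $F$ is a facet of $\Delta$ iff $F$ is the set of elbow tiles of some reduced pipe dream for $\pi$, i.e., iff $([n]\times[n])\setminus F$ is the cross-set of some $D\in\mathcal{RP}_\pi$. Unpacking what it means for $F$ to be a facet of $\Delta$: the complement $F^c$ is inclusion-minimal among subsets of $[n]\times[n]$ meeting every antidiagonal in $\mathcal{A}_\pi$. Thus the complements of facets are exactly the minimal transversals to $\mathcal{A}_\pi$, i.e.\ $\mathcal{A}_\pi^v = \mathcal{RP}_\pi$.

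Finally, I would apply the general principle that for a clutter (antichain of finite subsets of a finite ground set) $\mathcal{C}$, the blocker operation satisfies $(\mathcal{C}^v)^v = \mathcal{C}$. Here $\mathcal{A}_\pi$ is a clutter by its defining minimality, and $\mathcal{RP}_\pi$ is a clutter because all reduced pipe dreams for $\pi$ have the same cardinality $\ell(\pi)$. Applying involutivity to $\mathcal{C}=\mathcal{A}_\pi$ yields $\mathcal{RP}_\pi^v = (\mathcal{A}_\pi^v)^v = \mathcal{A}_\pi$, as desired.

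The substantive content is the middle step, i.e., the Knutson--Miller identification of the pipe dream complex with the Stanley--Reisner complex of the antidiagonal ideal; concretely this requires showing that every reduced pipe dream for $\pi$ is a transversal to $\mathcal{A}_\pi$ and that such transversals are minimal. The first and third steps are essentially formal applications of the Stanley--Reisner dictionary and clutter-blocker duality respectively, so the main obstacle is simply invoking the geometric theorem of [KM] correctly; the present statement is then extracted by taking complements and dualizing.
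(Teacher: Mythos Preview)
Your argument is correct. Note, however, that the paper does not actually prove this theorem; it merely cites it from \cite{KM} and \cite{JM}. Your approach---deducing the duality from the Knutson--Miller Gr\"obner basis theorem via the Stanley--Reisner correspondence and then applying blocker involutivity---is precisely the route implicit in the \cite{KM} citation, so in that sense you have reconstructed one of the two cited proofs faithfully.

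It is worth knowing that the second citation, \cite{JM}, gives a short direct combinatorial argument that bypasses the Gr\"obner machinery entirely: Jia and Miller prove $\mathcal{RP}_\pi^v = \mathcal{A}_\pi$ by an elementary induction showing directly that every reduced pipe dream meets every antidiagonal and that minimality holds, without invoking Schubert determinantal ideals. Your route is heavier but has the advantage of situating the statement within the broader degeneration framework the paper relies on; the \cite{JM} route is self-contained and requires no algebraic geometry. Either is acceptable here.
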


\begin{Exa} Let $\pi=1342$. The only essential box is at $(3,2)$, giving the condition $rank(M_{3,2} \leq 1)$. The antidiagonal set is $$\mathcal{A}_w = \{((2,1),(1,2)),((3,1),(1,2)),((3,1),(2,2)) \}$$ corresponding to three pipe dreams. See Figure \ref{fig:antidiags}.
\end{Exa}
\begin{figure}[htbp]
\begin{center}
\scalebox{1}{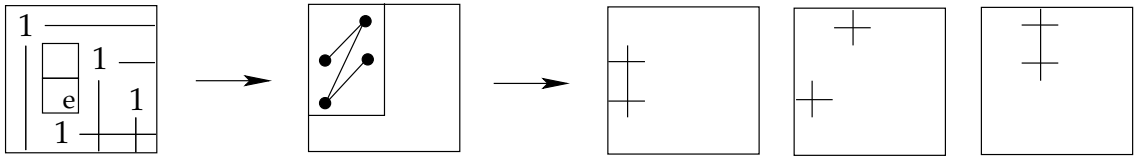}
\caption{For the single essential rank condition, the construction of pipe dreams from the transversal dual of the antidiagonals for $\pi=1342$.}
\label{fig:antidiags}
\end{center}
\end{figure}

\section{Simplicial Complexes }
We define a {\bf simplicial complex $\Delta$} on a set of ``vertices" $V$ as a downward order ideal in the power set of $V$. That is, the following condition holds: for $\sigma \in \Delta$, $\sigma' \subset \sigma$ implies that $ \sigma' \in \Delta$. We say a vertex $v$ is a {\bf cone vertex} if it lies in every maximal $\delta \in \Delta$. For simplicity we will often omit cone vertices from our diagrams (as they can be trivially re-added).
We call an element of $\Delta$ a {\bf face}, and call a maximal face a {\bf facet}. If all facets have the same size, which we assume hereafter, we say that $\Delta$ is {\bf pure}, and define a {\bf ridge} to be a face of one size lower. We say a face is {\bf exterior} if there exists a ridge $R \supseteq F$ where $R$ is itself contained in only one facet. If a face is not exterior, we say the face is {\bf interior}. A complex is {\bf thin} if each ridge is in only one or two facets, but not more.

For $\sigma$ a face in $\Delta$, the \textbf{deletion} of $\sigma$ from $\Delta$ is $\textrm{del}(\sigma,\Delta)=\{\sigma' \in \Delta | \sigma' \cap \sigma = \varnothing\}$. The \textbf{link} of $F$ in $\Delta$ is $\textrm{link}(\sigma,\Delta)=\{\sigma' \in \Delta | \sigma' \cap \sigma = \varnothing \textrm{ and } \sigma' \bigcup \sigma \in \Delta \}$. We say that $\Delta$ is \textbf{vertex-decomposable} if $\Delta$ is pure and either (1) $\Delta = \varnothing$, or (2) for some vertex $v \in \Delta$, both $\textrm{del}(v, \Delta)$ and $\textrm{link}(v, \Delta)$ are vertex-decomposable. A \textbf{shelling} of $\Delta$ is an ordered list $F_1,F_2,\ldots F_t$ of its facets such that $\bigcup_{j<i} \hat{F_j} \cap \hat{F_i}$ is a subcomplex generated by codimension 1 faces of $F_i$ for each $i \leq t$, where $\hat{F}$ denotes the set of faces of $F$. We say that $\Delta$ is \textbf{shellable} if it is pure and has a shelling. Then that intersection, $F_i \bigcap (F_1 \bigcup \cdots \bigcup F_{i-1})$ is isomorphic to a ball or a sphere.

\begin{Thm}\cite[Theorem 2.8]{Billera} If $\Delta$ is vertex-decomposable, then it is shellable. \end{Thm}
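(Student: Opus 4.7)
The plan is to prove the theorem by induction on the number of vertices of $\Delta$. The base case $\Delta=\varnothing$ is vacuously shellable. For the inductive step I would use the definition of vertex-decomposability to choose a vertex $v$ such that both $\textrm{del}(v,\Delta)$ and $\textrm{link}(v,\Delta)$ are vertex-decomposable; each has strictly fewer vertices than $\Delta$, so by the inductive hypothesis each admits a shelling, say $F_1,\ldots,F_s$ of $\textrm{del}(v,\Delta)$ and $G_1,\ldots,G_t$ of $\textrm{link}(v,\Delta)$.

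Next I would propose the concatenated list $F_1,\ldots,F_s,\{v\}\cup G_1,\ldots,\{v\}\cup G_t$ as a shelling of $\Delta$, and verify two things. First, that this enumerates every facet of $\Delta$ exactly once: facets not containing $v$ are precisely the facets of $\textrm{del}(v,\Delta)$, while facets containing $v$ biject with facets of $\textrm{link}(v,\Delta)$ via $F\mapsto F\setminus\{v\}$. This step uses purity of $\Delta$ together with the fact that vertex-decomposability forces $\textrm{del}(v,\Delta)$ and $\textrm{link}(v,\Delta)$ to be pure of the expected dimensions. Second, I would check the shelling condition at each index; for $i\le s$ it is inherited verbatim from the shelling of $\textrm{del}(v,\Delta)$, since none of the earlier facets contain $v$ and $F_i$ does not either.

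The hard part, and the main obstacle, is checking the shelling condition for $i>s$. Writing $H_i=\{v\}\cup G_{i-s}$, I would decompose the intersection $\hat{H_i}\cap\bigl(\hat{F_1}\cup\cdots\cup\hat{F_s}\cup\hat{H_{s+1}}\cup\cdots\cup\hat{H_{i-1}}\bigr)$ into a $v$-free piece $A:=\hat{H_i}\cap\bigcup_{j\le s}\hat{F_j}$ and a $v$-containing piece $B:=\hat{H_i}\cap\bigcup_{s<k<i}\hat{H_k}$. The piece $A$ clearly sits inside $\hat{G_{i-s}}$, and using the fact that the facet $G_{i-s}$ of $\textrm{link}(v,\Delta)$ extends to some facet $F_{j_0}$ of $\textrm{del}(v,\Delta)$ (again the purity/dimension input from vertex-decomposability), one actually has $A=\hat{G_{i-s}}$, which is exactly the codimension-$1$ face of $H_i$ opposite $v$. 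The piece $B$ turns out to be the cone from $v$ over $C:=\hat{G_{i-s}}\cap\bigl(\hat{G_1}\cup\cdots\cup\hat{G_{i-s-1}}\bigr)$, and the shelling property of the $G_k$'s says $C$ is generated by codimension-$1$ faces of $G_{i-s}$; coning with $v$ converts each of these into a codimension-$1$ face of $H_i$ containing $v$. Combining $A$ and $B$ realizes the whole intersection as a union of codimension-$1$ faces of $H_i$, establishing the shelling condition and closing the induction. The transition index $i=s+1$ is the degenerate case where $B=\varnothing$ and $A=\hat{G_1}$, which on its own is generated by the single codimension-$1$ face $G_1$ of $H_{s+1}$.
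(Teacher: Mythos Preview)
The paper does not supply its own proof of this statement; it is quoted verbatim as \cite[Theorem 2.8]{Billera} and then used as a black box. So there is nothing in the paper to compare your argument against.

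That said, your proposal is precisely the original Billera--Provan argument: induct on the number of vertices, shell $\textrm{del}(v,\Delta)$ first and then cone the shelling of $\textrm{link}(v,\Delta)$ with $v$. Your decomposition of the intersection at a ``late'' facet $H_i=\{v\}\cup G_{i-s}$ into the $v$-free part $A$ and the $v$-containing part $B$ is exactly the standard verification.

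One caution. The step where you assert that every facet $G_{i-s}$ of $\textrm{link}(v,\Delta)$ extends to some facet $F_{j_0}$ of $\textrm{del}(v,\Delta)$, and more generally that facets of $\textrm{del}(v,\Delta)$ are facets of $\Delta$, is not a formal consequence of the definition \emph{as written in this paper}. The paper's definition only demands that $\textrm{del}(v,\Delta)$ and $\textrm{link}(v,\Delta)$ be vertex-decomposable (hence pure), but does not explicitly require $\dim\textrm{del}(v,\Delta)=\dim\Delta$; for instance, if $v$ lies in every facet of $\Delta$ the deletion drops dimension and your concatenated list contains non-facets. In Billera--Provan's original definition this is ruled out by the extra clause ``every facet of $\textrm{del}(v,\Delta)$ is a facet of $\Delta$,'' which is what you are tacitly invoking when you write ``the purity/dimension input from vertex-decomposability.'' So your argument is correct for the intended notion, but you should be explicit that you are using the stronger (standard) definition rather than the paper's abbreviated one.
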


\begin{Thm}\cite[Proposition 1.2]{DK} A thin and shellable complex is homeomorphic to a ball. \end{Thm}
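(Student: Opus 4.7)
The plan is to proceed by induction on the number $t$ of facets in a shelling $F_1, \ldots, F_t$ of $\Delta$. The base case $t = 1$ is immediate, since a single $d$-simplex together with its faces is a $d$-ball. For the inductive step, set $\Delta_{i-1} := \bigcup_{j<i} \hat{F_j}$ and assume by induction that it is homeomorphic to a $d$-ball, where $d = \dim F_1$. The goal is to conclude that $\Delta_i = \Delta_{i-1} \cup \hat{F_i}$ is again a $d$-ball.

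By the shelling condition, the intersection $I_i := \hat{F_i} \cap \Delta_{i-1}$ is a pure $(d-1)$-dimensional subcomplex of $\partial \hat{F_i}$, generated by those ridges of $F_i$ shared with some earlier facet. Thinness forces each such ridge to be shared with exactly one $F_j$ ($j < i$). The next step is to argue that $I_i$ is itself a $(d-1)$-ball: the idea is that the shelling order on $\Delta$ restricts to a shelling of $I_i$ inside the sphere $\partial \hat{F_i}$, and a proper shellable subcomplex of a simplex boundary (generated by a nonempty subset of its facets) is a ball. Once this is established, $\Delta_i$ is the union of two $d$-balls, namely $\Delta_{i-1}$ and $\hat{F_i}$, glued along the $(d-1)$-ball $I_i$ sitting in the boundary of each; by a standard PL-topological fact, such a union is again a $d$-ball, completing the induction.

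The main obstacle is justifying that $I_i$ is a proper subcomplex of $\partial \hat{F_i}$, equivalently that no intermediate $\Delta_i$ has already closed up into a pseudomanifold without boundary. Thinness is exactly what controls this at intermediate stages: if every ridge of $F_i$ were already shared with an earlier facet, then $I_i = \partial \hat{F_i}$ and no further facets could be thinly attached, so we would terminate at a $d$-sphere rather than a $d$-ball. The genuinely delicate part of the argument is thus a careful bookkeeping of the ball-versus-sphere dichotomy in the inductive step, together with the auxiliary fact that shelling restricts to the induced subcomplex $I_i$ of $\partial \hat{F_i}$ so that one may apply the inductive hypothesis one dimension lower to recognize $I_i$ as a ball.
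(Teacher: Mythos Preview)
The paper does not prove this statement; it is quoted from Danaraj--Klee as a background result with no argument given. Your sketch is essentially the original Danaraj--Klee proof: induct along the shelling, and at each step glue a simplex to the existing ball along a $(d-1)$-ball in its boundary.

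Two places deserve tightening. First, your claim that ``the shelling order on $\Delta$ restricts to a shelling of $I_i$'' is not the right justification: the facets of $I_i$ are ridges of $\Delta$, not facets, so there is nothing to restrict. The correct observation is simpler: $I_i$ is a nonempty pure full-dimensional subcomplex of the simplex boundary $\partial\hat F_i$, and \emph{any} such subcomplex is shellable (any ordering of its top faces works), hence by induction on dimension it is a ball when proper and the sphere $\partial\hat F_i$ when full. Second, for the PL gluing lemma you invoke, you need $I_i$ to sit in $\partial\Delta_{i-1}$ as well as in $\partial\hat F_i$. This is exactly where thinness on the $\Delta_{i-1}$ side is used: each ridge $R\subset I_i$ lies in $F_i$ and in some $F_j$ with $j<i$, and thinness forbids a second such $F_{j'}$, so $R$ is a boundary ridge of $\Delta_{i-1}$.

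Finally, note that the statement as printed (``homeomorphic to a ball'') is slightly weaker than what Danaraj--Klee actually prove and what your argument yields: the conclusion is \emph{ball or sphere}, the sphere case arising precisely when the last attachment has $I_t=\partial\hat F_t$. You correctly identify this dichotomy in your last paragraph.
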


Define the {\bf pipe dream complex} $\Delta(\pi)$ to be the simplicial complex with vertices given by entries $(i,j)$ in $M_n\C$ and facets given by the elbow sets in pipe dreams for $\pi$. Then the lower-dimensional faces may be labeled with non-reduced pipe dreams. We will come back to this in \S \ref{sec:swc} in relation to subword complexes.

\begin{Lem}\cite{KM} $\Delta(\pi)$ is thin. \end{Lem}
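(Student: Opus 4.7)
The plan is to identify $\Delta(\pi)$ with a subword complex and then reduce thinness to a Strong Exchange argument in the Coxeter group $S_n$. Concretely, reading the boxes above the antidiagonal in some fixed row-by-row order produces a word $Q=(\sigma_1,\ldots,\sigma_N)$ in the simple reflections (a reduced expression for $w_0$), under which an assignment of crosses/elbows to the boxes is the same as a choice of subword of $Q$ (the crosses) and its complement (the elbows). Reduced pipe dreams for $\pi$ correspond exactly to cross-subwords that are reduced expressions for $\pi$, so $\Delta(\pi)$ is isomorphic to the subword complex whose facets are the elbow sets, i.e., the complements in $Q$ of reduced expressions for $\pi$.

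Under this identification, a ridge $R$ is a facet with one elbow switched to a cross, so $Q' := Q\setminus R$ has length $\ell(\pi)+1$ and at least one single-letter deletion from $Q'$ yields a reduced expression for $\pi$. Counting facets $F\supseteq R$ is therefore equivalent to counting positions $v\in Q'$ with $Q'\setminus\{v\}$ a reduced expression for $\pi$. The plan is to split into two cases on $Q'$. If $Q'$ is itself reduced, it represents some $w$ with $\pi\lessdot w$ and $\ell(w)=\ell(\pi)+1$; the Strong Exchange Condition (Matsumoto--Tits) then says that the Bruhat cover $\pi\lessdot w$ is witnessed by deletion of a unique position of the reduced word $Q'$, giving exactly one facet above $R$. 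If $Q'$ is not reduced, then $\ell(\delta(Q'))\le\ell(\pi)$; combined with $\pi\le\delta(Q')$ (because some letter-deletion from $Q'$ equals $\pi$), this forces $\delta(Q')=\pi$, and one shows that $Q'$ then has exactly two positions whose deletion yields a reduced expression for $\pi$.

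The main obstacle is the last claim: that in the non-reduced case there are not three or more admissible deletions. The cleanest argument is inductive on word length. Pick any admissible deletion position $v_0$, giving a reduced expression $Q'\setminus\{v_0\}$ for $\pi$. Inserting the letter at $v_0$ back in creates the single non-reduced move; applying Strong Exchange inside $Q'\setminus\{v_0\}$ to the reflection associated with $v_0$ identifies a unique second position $v_1$ whose deletion also yields a reduced expression for $\pi$, and simultaneously rules out any further position. Geometrically, this matches the pipe-dream picture: a ridge corresponds to a pipe dream with exactly two pipes that cross twice, and flipping either of these two crossings to an elbow produces a reduced pipe dream for $\pi$, while no other crossing flip works.

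Combining the two cases, every ridge of $\Delta(\pi)$ is contained in at most two facets, which is precisely the definition of thinness.
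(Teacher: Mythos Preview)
Your proposal is correct and follows essentially the same approach as the paper: both identify a ridge $R$ with a length-$(\ell(\pi)+1)$ word $Q'=Q\setminus R$ and count the single-letter deletions yielding a reduced expression for $\pi$, splitting into the cases $Q'$ reduced (one facet) versus $Q'$ non-reduced (two facets). The only difference is that the paper simply cites this dichotomy as Lemma~3.5 of \cite{KM}, whereas you sketch its proof via the Strong Exchange Condition; your sketch is sound, though the uniqueness of the second deletion in the non-reduced case deserves the explicit inversion-counting you allude to.
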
  
\begin{proof}
Lemma 3.5 in \cite{KM} says that if $w$ is a word in $\Pi$, and $\pi \in \Pi$ such that $|w| = \ell(\pi)+1$, then there are at most two elements $\nu \in T$ such that $w\ \nu$ represents $\pi$. Then for $R$ a ridge in $\Delta(\pi)$, $R$ is contained in 2 or 1 facet, depending on whether $R$ is a nonreduced pipe dream for $\pi$ or a reduced pipe dream for a permutation $\pi'> \pi$.  \end{proof}

We consider another interpretation of pipe dreams, related to rank conditions on matrices, and the ideals defined by the leading terms of the corresponding determinants. Let us consider a $k \times n$ matrix $\mathbf{x}=(x_{ij})$. We define an {\bf antidiagonal monomial} of size $r$ in $k[\mathbf{x}]$ as the product of the antidiagonal entries of an $r \times r $ submatrix of $\mathbf{x}$. Then for a $k \times n$ partial permutation $w$, the {\bf antidiagonal ideal} $J_w \subset k[\mathbf{x}]$ is generated by all antidiagonals in $\mathbf{x}_{p \times q}$ of size $1+r_{pq}(w)$ for all $p$ and $q$.

\chapter{Geometric Background}

We introduce the geometric objects that will be relevant to our main theorems, along with some interesting background and motivation. We begin with the definitions of some varieties and a combinatorial way to study them, simplicial complexes. We then consider a convenient term order that leads us to subword complexes, Stanley-Reisner rings, and Gr{\"o}bner bases.

\section{Subword Complexes, Stanley-Reisner Rings and Gr{\"o}bner Bases } \label{sec:swc}
The references for this section are \cite{KM} and Chapter 16 in \cite{MS}.

Let $Q$ be a word in a Coxeter group, and $\pi$ be a permutation. The \textbf{subword complex} $\Delta(Q,\pi)$ is simplicial complex whose faces are the set of subwords $Q\backslash P$ whose complements $P$ contain $\pi$. That is, if $Q \backslash D$ is a facet of the subword complex $\Delta(Q,\pi)$, then the reflections in $D$ give a reduced expression for $\pi$. See Figure \ref{fig:swgb}.

\begin{Lem}\cite[Lemma 2.2]{KM} $\Delta(Q,\pi)$ is a pure simplicial complex whose facets are the subwords $Q \backslash P$ such that $P \subseteq Q$ represents $\pi$. \end{Lem}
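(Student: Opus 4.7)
The plan is to verify in turn that $\Delta(Q,\pi)$ is a simplicial complex, identify its facets, and then deduce purity from the fact that all reduced expressions for a Coxeter-group element have the same length. The key distinction to keep track of throughout is the one already made in the excerpt: $P$ \emph{contains} $\pi$ means some subsequence of $P$ is a reduced expression for $\pi$, whereas $P$ \emph{represents} $\pi$ means $P$ itself is such a reduced expression.

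First I would check the downward-closure axiom. A face is a subword of the form $Q\setminus P$ where $P$ contains $\pi$. If $F'\subseteq F = Q\setminus P$, write $F' = Q\setminus P'$ with $P'\supseteq P$. Any subsequence of $P$ representing $\pi$ is a fortiori a subsequence of $P'$, so $P'$ still contains $\pi$ and hence $F'\in\Delta(Q,\pi)$. This establishes that $\Delta(Q,\pi)$ really is a simplicial complex.

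Next I would show the two inclusions identifying facets with the sets $Q\setminus P$ for which $P$ represents $\pi$. For one direction, suppose $P$ represents $\pi$, so $|P|=\ell(\pi)$. Then $Q\setminus P$ is a face, and removing any single letter from $Q\setminus P$ would enlarge $P$'s complement, i.e., shrink $P$ to some $P''$ with $|P''|=\ell(\pi)-1<\ell(\pi)$; no such $P''$ can contain $\pi$ because every expression for $\pi$ has length at least $\ell(\pi)$. So $Q\setminus P$ is maximal. Conversely, suppose $Q\setminus P$ is a facet. By definition of ``contains,'' there is some $P'\subseteq P$ with $P'$ a reduced expression for $\pi$; if $P'\subsetneq P$ then $Q\setminus P'\supsetneq Q\setminus P$ would be a strictly larger face, contradicting maximality. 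Hence $P'=P$, i.e., $P$ itself represents $\pi$.

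Purity is then immediate: every facet is $Q\setminus P$ with $P$ a reduced expression for $\pi$, and every reduced expression for $\pi$ has the same length $\ell(\pi)$, so every facet has size exactly $|Q|-\ell(\pi)$. The main ``obstacle'' is really just bookkeeping, namely being careful to use \emph{contains} for faces and \emph{represents} for facets, and appealing to the uniform length of reduced expressions at the right moment; no structural Coxeter-theoretic input beyond this length property is needed.
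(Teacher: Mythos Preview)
The paper does not actually supply a proof of this lemma; it merely cites it as \cite[Lemma 2.2]{KM} and moves on. So there is no ``paper's own proof'' to compare against. Your argument is the standard one and is essentially correct: downward closure is immediate, the identification of facets follows from the contains/represents distinction, and purity drops out of the constant length of reduced expressions.

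One wording slip to fix: in your first facet direction you write ``removing any single letter from $Q\setminus P$ would enlarge $P$'s complement, i.e., shrink $P$.'' This is backwards. To test maximality of $Q\setminus P$ you must try to \emph{add} a letter to it, which amounts to \emph{removing} a letter from $P$; that yields $P''\subsetneq P$ with $|P''|=\ell(\pi)-1$, and then your conclusion (no such $P''$ can contain $\pi$) is exactly right. The logic you intended is correct; only the sentence describing the operation is inverted.
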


\begin{figure}[htbp]
\begin{center}
\scalebox{0.6}{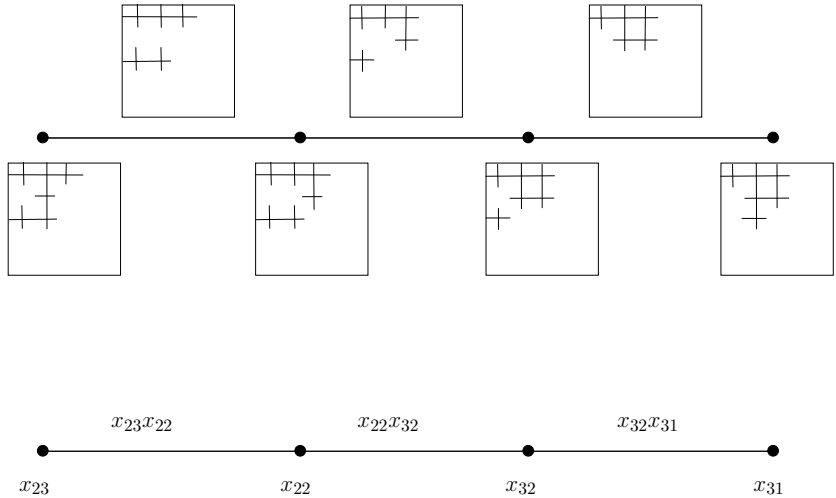}
\caption{The subword complex $\Delta(4321432434,\pi=41523=s_3 s_4 s_2 s_3 s_2)$, with components labeled by the pipe dream representation of the subword (above) and by the corresponding variables (below), both with cone points removed.}
\label{fig:swgb}
\end{center}
\end{figure}

\begin{Thm}\cite[Theorem 2.5]{KM} Subword complexes $\Delta(Q,\pi)$ are vertex-decomposable, hence shellable. \end{Thm}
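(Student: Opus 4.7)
The plan is to prove this by induction on the length $m=|Q|$ of the word $Q$, using the first position of $Q$ as the vertex to decompose by. Purity is already given by Lemma 2.2, so the entire task is to exhibit a vertex $v$ for which both the deletion and the link are themselves subword complexes (and hence vertex-decomposable by induction).

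Let $Q=(\sigma_1,\sigma_2,\ldots,\sigma_m)$ and write $Q'=(\sigma_2,\ldots,\sigma_m)$. Take $v$ to be the position of $\sigma_1$. The base case $m=0$ is immediate: $\Delta(Q,\pi)$ is either the empty complex (if $\pi\ne 1$) or $\{\varnothing\}$ (if $\pi=1$), both trivially vertex-decomposable. For the inductive step, I would first identify the link. A face $F$ of $\Delta(Q,\pi)$ containing $v$ corresponds, after removing $v$, to a subset $F'\subseteq\{2,\ldots,m\}$ such that $Q\setminus(F'\cup\{v\})=Q'\setminus F'$ still contains $\pi$. This gives
\[
\mathrm{link}(v,\Delta(Q,\pi))=\Delta(Q',\pi),
\]
which is vertex-decomposable by induction.

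Next I would analyze the deletion $\mathrm{del}(v,\Delta(Q,\pi))$, splitting into two cases according to whether $\sigma_1$ is or is not a left descent of $\pi$. A face not containing $v$ corresponds to $F\subseteq\{2,\ldots,m\}$ such that $\{\sigma_1\}\cup(Q'\setminus F)$ contains $\pi$. If $\ell(\sigma_1\pi)>\ell(\pi)$, no reduced word of $\pi$ begins with $\sigma_1$, so position $1$ is useless; thus $\mathrm{del}(v,\Delta(Q,\pi))=\Delta(Q',\pi)$, and $v$ is in fact a cone point. If $\ell(\sigma_1\pi)<\ell(\pi)$, then $\pi=\sigma_1\cdot(\sigma_1\pi)$ is a reduced factorization, and by the Subword Property of strong Bruhat order any word containing $\pi$ also contains $\sigma_1\pi$. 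Therefore $\{\sigma_1\}\cup(Q'\setminus F)$ contains $\pi$ if and only if $Q'\setminus F$ contains $\sigma_1\pi$, giving
\[
\mathrm{del}(v,\Delta(Q,\pi))=\Delta(Q',\,\sigma_1\pi),
\]
which is again vertex-decomposable by induction.

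Combining these, $\Delta(Q,\pi)$ is vertex-decomposable in both cases, completing the induction. The conclusion that $\Delta(Q,\pi)$ is shellable is then immediate from the Billera theorem (Theorem 2.8) quoted in the excerpt. The main obstacle I expect is the deletion side in Case~1: one must invoke the Subword Property of Bruhat order to see that the "extra" position $\sigma_1$ is needed exactly when $\sigma_1$ is a left descent, and that in that case requiring the larger word to contain $\pi$ is equivalent to requiring the shorter word to contain $\sigma_1\pi$. All the other steps are routine bookkeeping about which subwords become which under removal of the first letter.
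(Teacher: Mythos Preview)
Your argument is correct and is exactly the approach indicated in the paper: the proof sketch given there says only that ``both the link and deletion of the first letter in $Q$ are themselves subword complexes,'' which is precisely what you carry out, with the same case split on whether $\sigma_1$ is a left descent of $\pi$. Your invocation of the Subword Property for the deletion in the descent case is the right tool and matches the original Knutson--Miller argument.
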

The proof is by showing that both the link and deletion of the first letter in $Q$ are themselves subword complexes.

We will also need the fact that

\begin{Thm}\cite{Kfs}\label{thm:321} If $Q=Q'$ up to switching commuting letters, then $\Delta(Q,\pi)\cong \Delta(Q',\pi)$ for any $\pi$.  \end{Thm}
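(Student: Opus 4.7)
The plan is to reduce to the case where $Q$ and $Q'$ differ by a single commutation—swapping two adjacent letters $s_i, s_j$ at positions $k$ and $k+1$ with $|i-j|>1$—and then exhibit an explicit simplicial isomorphism. The general statement then follows by chaining such single swaps.

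For one swap, I would define the candidate map $\phi\colon 2^{[m]} \to 2^{[m]}$ on the position set $[m] = \{1,\ldots,m\}$ as the action of the transposition $(k,\,k+1)$ on subsets: $\phi(F)=F$ when $F$ contains both or neither of $\{k,k+1\}$, and $\phi$ swaps which one is included in the remaining two cases. Being induced by a bijection of vertex sets, $\phi$ is automatically an isomorphism of ambient simplices on $[m]$, so it suffices to verify that $F$ is a face of $\Delta(Q,\pi)$ if and only if $\phi(F)$ is a face of $\Delta(Q',\pi)$—that is, $Q\setminus F$ contains $\pi$ iff $Q'\setminus \phi(F)$ contains $\pi$.

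I would check this by case analysis on $F\cap\{k,k+1\}$. If $F$ contains both positions, both $s_i$ and $s_j$ are deleted and the words $Q\setminus F$ and $Q'\setminus \phi(F)=Q'\setminus F$ agree letter-for-letter. If $F$ contains exactly one of the two positions, a direct check shows that $\phi(F)$ retains the position in $Q'$ whose letter matches the one retained in $Q$, so again the complements coincide as words. The only interesting case is when neither $k$ nor $k+1$ lies in $F$: then $\phi(F)=F$ and the complements $Q\setminus F$ and $Q'\setminus F$ differ precisely in that the commuting pair appears as $s_is_j$ in one and $s_js_i$ in the other. Since $s_i$ and $s_j$ commute, any subword of $Q\setminus F$ representing $\pi$ gives rise to a subword of $Q'\setminus F$ with the same product (if it uses both occurrences, swap their order; otherwise no change), and the resulting expression remains a reduced decomposition of $\pi$. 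So the face condition is preserved in every case.

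The main (and really only nontrivial) point is this last case: one must observe that ``contains $\pi$'' is invariant under commutations of adjacent commuting letters in the ambient word, which reduces to the fact that in a Coxeter group a commutation move sends reduced expressions to reduced expressions for the same element. Once the single-swap case is established, induction on the number of commutations required to transform $Q$ into $Q'$ yields the general isomorphism $\Delta(Q,\pi)\cong \Delta(Q',\pi)$.
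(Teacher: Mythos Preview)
Your argument is correct. The paper itself does not give a proof of this statement; it simply cites it from \cite{Kfs}, so there is no in-paper proof to compare against. Your direct combinatorial approach---reducing to a single adjacent commutation and exhibiting the vertex bijection given by the transposition $(k,k+1)$ on position sets---is the natural one and works as stated.

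One small point of clarity in your Case~2 (neither $k$ nor $k+1$ in $F$): when you say ``otherwise no change,'' you should be explicit that the \emph{positions} of the subword may still need to be adjusted. If a subword $S\subseteq Q\setminus F$ representing $\pi$ uses exactly one of the positions $k,k+1$, then the corresponding subword of $Q'\setminus F$ is $\phi(S)$ (swapping which of $k,k+1$ is used), so that the same letter is picked up. The resulting word is literally identical as a sequence of simple reflections, so it is certainly still a reduced expression for $\pi$. Only when $S$ uses both positions does the word itself change, and then only by the commutation $s_is_j\leftrightarrow s_js_i$, which preserves both the product and reducedness. With this made explicit, the bijection $\phi$ on vertex sets induces the claimed simplicial isomorphism, and induction on the number of commutations finishes the proof.
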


The following theorem shows that subword complexes are well-behaved.
\begin{Thm}\cite[Theorem 3.7]{KM} The subword complex $\Delta(Q,\pi)$ is homeomorphic to a ball or sphere; in particular, every ridge (codimension 1 facet) is contained in one or two facets. \end{Thm}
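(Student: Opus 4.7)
The plan is to combine the two structural properties already in hand---shellability, proved in \cite[Theorem 2.5]{KM}, and thinness---and then invoke a Danaraj--Klee style result (the paper's quoted \cite[Proposition 1.2]{DK}) to conclude that $\Delta(Q,\pi)$ is a ball or sphere. So the only genuinely new content is showing thinness: every ridge lies in one or two facets.

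To prove thinness, I would start from a ridge $R$ in $\Delta(Q,\pi)$. By purity, the complement $P = Q\setminus R$ is a subword of size $\ell(\pi)+1$ containing $\pi$. A facet above $R$ corresponds to choosing one letter of $P$ to delete so that the remaining ordered subword is a reduced expression for $\pi$. Thus thinness amounts to the combinatorial claim: \emph{if $P$ is an ordered word of length $\ell(\pi)+1$ in the simple reflections of a Coxeter group and $P$ contains $\pi$, then at most two single-letter deletions from $P$ give a reduced word for $\pi$.} This is exactly the content of the lemma cited in the proof of the pipe-dream thinness lemma above, and is the key technical input.

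The cleanest way I would argue it is via the Demazure/$0$-Hecke product $\delta(P)$, defined by $\delta(\varnothing)=e$ and $\delta(P\cdot s) = \delta(P)s$ if this lengthens and $\delta(P)$ otherwise. Since $P$ contains $\pi$ and has length $\ell(\pi)+1$, we get $\delta(P)\in\{\pi, \pi'\}$ where $\pi'$ covers $\pi$ in Bruhat order (or possibly $\delta(P)=\pi$ itself). Two cases arise. If $\delta(P)=\pi$, then $P$ is a reduced word for some $\pi'>\pi$ only if a single braid-like cancellation occurs; a standard strong exchange argument (applied to the reversed word, scanning for the first letter whose omission still yields a reduced word for $\pi$) produces exactly two valid deletions, giving two facets above $R$. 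If $\delta(P)\neq\pi$ strictly, then $P$ is a reduced expression for $\pi'$, and the subword property of the Bruhat order together with the strong exchange condition forces a unique letter whose deletion reduces $\pi'$ down to $\pi$, giving one facet above $R$. Either way, at most two facets contain $R$, establishing thinness.

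Once thinness is in hand, the rest is assembly. The complex $\Delta(Q,\pi)$ is pure by \cite[Lemma 2.2]{KM}, shellable by \cite[Theorem 2.5]{KM}, and now thin. A pure thin shellable complex is, by the paper's \cite[Proposition 1.2]{DK}, homeomorphic to a ball or a sphere, which is precisely the statement of the theorem. The main obstacle is the Coxeter-theoretic counting step: making precise that \emph{at most} two letters of the length-$(\ell(\pi)+1)$ word $P$ can be removed to yield a reduced expression for $\pi$. This depends entirely on the strong exchange property and is where all the Coxeter structure enters; every other step is essentially bookkeeping on top of theorems already quoted.
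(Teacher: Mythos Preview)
The thesis does not give its own proof of this theorem; it simply cites \cite[Theorem 3.7]{KM}. Your outline is essentially the original Knutson--Miller argument, and all the pieces you invoke are exactly the ones the thesis quotes: purity from \cite[Lemma 2.2]{KM}, shellability from \cite[Theorem 2.5]{KM}, the ``at most two deletions'' count from \cite[Lemma 3.5]{KM} (which the thesis uses verbatim in the thinness lemma for the pipe dream complex), and the Danaraj--Klee criterion \cite{DK}. So your approach is correct and matches the intended one.

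One sentence in your case analysis is garbled and should be fixed before this is used: you write ``If $\delta(P)=\pi$, then $P$ is a reduced word for some $\pi'>\pi$ only if a single braid-like cancellation occurs.'' This is backwards. If $\delta(P)=\pi$ and $|P|=\ell(\pi)+1$, then $P$ is \emph{not} reduced, and the strong exchange condition gives exactly two letters whose deletion yields a reduced word for $\pi$. If instead $\delta(P)=\pi'$ with $\ell(\pi')=\ell(\pi)+1$, then $P$ is reduced for $\pi'$; the deletion giving $\pi$ is unique because the left-inversion reflections $t_j=(s_{i_1}\cdots s_{i_{j-1}})s_{i_j}(s_{i_1}\cdots s_{i_{j-1}})^{-1}$ along a reduced word are pairwise distinct, and $\pi'=t_j\pi$ determines $t_j$ (hence $j$) uniquely. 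With that correction your thinness argument is clean, and the rest is assembly exactly as you say.
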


Fix a field $k$. A {\bf monomial ideal} in the polynomial ring $k[x_1, \ldots ,x_n]$ is an ideal that is generated by monomials. The {\bf Stanley-Reisner ring} of a simplicial complex $\Delta$ is the quotient ring $k[x_1,\ldots, x_n]/I_\Delta$, where $n = |V|$ and we define the monomial ideal $I_\Delta=\langle \prod_{j \in G} x_j \; | \; G \notin \Delta \rangle$. It is enough to take minimal such $G$.

\begin{Exa}
In Figure \ref{fig:swgb} are two copies of the subword complex for $\pi=41523$, one labeled with pipe dreams and the other with the corresponding variables. The nonfaces give the Stanley-Reisner ideal $$I_\Delta = \{x_{23}x_{31}, x_{23}x_{22}x_{32}, x_{23}x_{32}x_{31}, x_{23}x_{22}x_{31}, x_{22}x_{32}x_{31}, x_{23}x_{22}x_{32}x_{31}\}$$

\end{Exa}

When considering a polynomial ring over a field, it will be useful to put an ordering on the monomials. An order is said to be {\bf graded} if monomials are first ordered by decreasing total degree. The {\bf lexicographic order} compares two monomials of the same degree by highest power of the alphabetically first variable. If the powers of the first variable are equal, we compare the second variable's powers, and so on.

\begin{Exa} In $k[x,y,z]$, the graded lexicographic order gives  $$x^2 > xy > xz > y^2  > yz > z^2 > x > y > z>1 $$ \end{Exa}

The {\bf reverse lexicographic order} instead considers the powers of the last variable, and throws out the term with the highest power of the last variable. We repeat until terms with the last variable are gone, and repeat with the second to last variable. This gives the initial term, then we repeat the process to order the remaining terms. We will apply this in \S \ref{sec:newtermorder}. It is shown in Theorem 5 of \cite{Kfs} that the choice of term order does not matter; we will always end up with a single monomial.

\begin{Exa}(\S 3 in \cite{BB}) The bottom pipe dream for a permutation $\pi$ corresponds to the largest reduced word for $\pi$ in reverse lexicographic order. \end{Exa}

Given an ideal $I$ in our polynomial ring $I=\langle p_1, \ldots, p_k \rangle$, and for a fixed choice of ordering, the {\bf initial ideal $\init I$} is the ideal generated by all of the leading monomials in $I$. We say $G$ is a {\bf Gr{\"o}bner basis} of $I$ if the ideal given by the leading terms of polynomials in $I$ is already generated by the leading terms of the basis $G$, or equivalently that the leading term of any polynomial in $I$ is divisible by the leading term of some polynomial in $G$. In fact, (finite) Gr{\"o}bner bases always exist, and can be calculated for any ideal given a generating subset. The choice of ordering affects the number of calculations required, and reverse lexicographic ordering is typically the fastest (although we will not be concerned with this fact).

\section{Varieties}
A {\bf variety} in affine space is the set of solutions of a system of polynomial equations generating a prime ideal. In the following sections we will describe the equations we are considering, from rank conditions on matrices of certain dimensions and with certain specified columns.

We let $M_{k \times n}$ be the set of matrices over $\C$ (unless otherwise specified), with $k$ rows and $n$ columns, and typical element $M$. We will use $M_{[i,j]}$ to denote the submatrix composed of columns $i$ to $j$ of $M$. The {\bf general linear group $GL_k$} is the set of $k \times k$ invertible matrices. We say that a matrix $M \in M_{k \times n}(\R)$ is {\bf totally nonnegative} if the determinants of all of its $k \times k$ minors are nonnegative, and we denote the set of such matrices as $ M_{k \times n}^{\geq 0}(\R)$.

We consider varieties inside several different spaces. For a finite dimensional vector space $V$ over a field $k$, a {\bf partial flag} is a sequence of subspaces $$F=\{\emptyset = V_0 \subset V_1 \subset \cdots\subset V_k = V\}$$ where we let $d_i = \dim (V_i)$. If $k=n$ and $d_i = i$, then $F$ is a {\bf (complete) flag}. The set of all such flags forms the {\bf flag manifold}. We will also consider varieties that live inside the {\bf Grassmannian $Gr_k \C^n$}$=(GL_k \setminus M^{\text{rank=} k}_{k \times n})$, where $GL_k$ acting on the left does row operations. We can embed the Grassmannian as a particular subset of $\mathbb{P}(Alt^k \C^n)$, cut out by the Pl\"{u}cker equations, a fact we neither prove nor use.

A {\bf stratification} is a decomposition of a space into finitely many disjoint locally closed sets called {\bf strata}, such that every stratum's closure is a union of strata. Note that any finite decomposition of a space $X$ into disjoint locally closed sets can be refined to a stratification.

\subsection{Schubert Varieties}\label{sec:sch}

Our references for this section are \cite{Fultontxt} and \cite{Briontxt}.

Let $G$ be a connected reductive algebraic group over $\C$. Let $B$ be a Borel subgroup of $G$, $P$ be a parabolic subgroup of $G$, $N_{-}$ a maximal nilpotent group opposite B, and $T$ be the torus. Those not familiar with Lie theory can simply think of the case where $G=GL_n$, $B$ is the set of upper triangular matrices, $B_{-}$ denote the set of lower triangular matrices, $P \supseteq B$ is those matrices of the form $\{M \; | \; m_{ij} =0, \; i>k \geq j\}$ ($2 \times 2$-block upper triangular), $N_{-}$ is the set of lower triangular matrices with 1's on the diagonal, and $T$ is the maximal torus, $T = (\C^x)^n$. We identify $(G/P)^T$ with $W / W_p$ by $W W_p \mapsto WP/P$ for $W$ a Weyl group.
 \pagebreak
For $\pi \in W$ a Weyl group element, let $X_\pi^\circ := B_{-} \pi P/P \subseteq G/P$ and $X_\pi := \overline{X_\pi^\circ}$ be the associated {\bf Bruhat cell} and {\bf Schubert variety} respectively, each of codimension $\ell(\pi)$ (the length of $\pi$ as an element of the Coxeter group $W$). Define the {\bf opposite Bruhat cell} $X_\circ ^v := BvB/P$ and {\bf opposite Schubert variety} $X^v := \overline{X_\circ ^v}$, each of dimension $\ell(v)$. Each Bruhat cell or opposite Bruhat cell is just a copy of affine space. A {\bf Richardson variety} $X_u^w $ is the intersection of a Schubert variety with an opposite Schubert variety, $X_u^w = X_u \cap X^w.$

We call $X_1^\circ$, which is open and dense in $G/P$, the {\bf big cell}. We can shift the big cell to be ``centered" at $v$ by $v N_{-}B$, and call it the {\bf permuted big cell.} We define the {\bf Schubert patch} on $X_w$ as the intersection of $X_w$ with the permuted big cell, $X_w|_v=  X_w \cap (v N_{-}BvB)$. We note that the set $\{ X_w |_v, \; v \geq w\}$ forms an affine open cover of the Schubert variety $X_w$.
Let $T$ denote the maximal torus $T \cong (\C^x)^n$, and $(Gr_k \C^n)^T$ denote the fixed points under the $T$ action. Let $\lambda \in (Gr_k \C^n)^T \cong S_n/(S_k \times S_{n-k} \cong \binom{n}{k})$. Equivalently, $\lambda=(\lambda_1,\ldots, \lambda_k) \in \binom{[n]}{k}$, where we use the notation $[n] = \{1,\ldots,n\}$. Let $\C^{[i,\ldots,j]}$ be the subset of $\C^n$ where only the entries in places $i$ to $j$ (inclusive) are nonzero. The equations defining the Schubert variety in $Gr_k \C^n$, where $\lambda$ is considered as a bit string, are $$X_\lambda = \{V \;|\; \text{dim}(V \cap \C^{[1,\ldots,i]}) \geq \# 1 \text{'s in } \lambda \text{ in } [1,\ldots,i] \}$$ and those of the open Schubert cell are
$$X_\lambda^\circ = \{V \;|\; \text{dim}(V \cap \C^{[1,\ldots,i]}) = \# 1 \text{'s in } \lambda \text{ in } [1,\ldots,i] \}.$$
The equations defining the opposite Schubert variety in $Gr_k \C^n$ are $$X^\mu = \{V \;| \;\text{dim}(V \cap \C^{[i,\ldots,n]}) \geq \# 1 \text{'s in } \mu \text{ in } [i,\ldots, n] \}$$
and those of open opposite Schubert cell are
$$X^\mu_\circ = \{V \;| \;\text{dim}(V \cap \C^{[i,\ldots,n]}) = \# 1 \text{'s in } \mu \text{ in } [i,\ldots, n] \}.$$

In this thesis, we will look at varieties from the matrix perspective, where the $V$'s are represented by by $n \times n$ matrices of rank at most $k$, whose row span yields the subspace $V$. A Schubert patch corresponds to setting a $k$-subset of the columns equal to the columns of the identity matrix $I_k$.

In particular, the conditions imposed on a Schubert variety correspond to rank conditions on terminal intervals of columns, and on an opposite Schubert variety to rank conditions on initial intervals of columns. This implies that Richardson varieties are defined by the intersection of terminal and initial rank conditions, but sometimes this together with the condition that $dim(V)=k$ gives interval conditions that are neither initial nor terminal.

\begin{Exa} In $M_{2 \times 3}$, the Schubert variety $X_{3124}$ gives the rank condition $rank[1,2] \leq 1$ and the opposite Schubert variety $X^{1423}$ gives the condition $rank[2,3] \leq 1$. Then the ideal generated by these two conditions is reducible to the conditions $rank[1,2,3] \leq 1$ or $rank[2] = 0$. However, the Richardson variety $X_{1423}^{3124}$ requires only the latter.
\end{Exa}

As in \cite{Fulton}, a {\bf matrix Schubert variety} is defined for $\pi \in S_n$ by $$\overline{X_\pi} = \overline{B_{-}\underline{\pi} B_{+}} \subseteq M_n \C$$  The Bruhat order on $S_n$ corresponds to reverse containment on matrix Schubert varieties:  $\pi \leq \rho$ if and only if $\overline{X_\pi} \supseteq \overline{X_\rho}$.

\subsection{Stratifications Constructed From Hypersurfaces}\label{sec:strats}

 Given a polynomial $f$, we start with the hypersurface $f=0$ and construct varieties by taking components, intersecting them, taking unions and repeating, as in \cite{Kfs}.

\begin{Thm}\cite[Theorem 4]{Kfs} \label{thm:initprod}
  Let $f \in \Z[x_1,\ldots,x_n]$ be a degree $k$ polynomial
  whose lexicographically first term is (a $\Z$-multiple of)
  a product of $k$ distinct variables.

  Let $Y$ be one of the schemes constructed from the hypersurface
  $f=0$ by taking components, intersecting, taking unions, and repeating.
  (Or more generally, let $Y$ be compatibly split with respect to the
  splitting $\Tr(f^{p-1}\bullet)$.)
  Then $Y$ is reduced over all but finitely many $p$, and over $\Q$.

  Let $\init Y$ be the lex-initial scheme of $Y$. Then (away from those $p$)
  $\init Y$ is a Stanley-Reisner scheme.
  \end{Thm}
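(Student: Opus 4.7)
The plan is to establish reducedness via the standard theory of Frobenius splitting and to obtain the Stanley-Reisner conclusion by promoting the passage to initial ideals into a degeneration of Frobenius splittings. Over a field of positive characteristic $p$, the Mehta-Ramanathan theorem says that any subscheme compatibly split by a Frobenius splitting is automatically reduced, so $Y$ is reduced in characteristic $p$. To descend to characteristic $0$, spread $Y$ out to a family over a finitely generated $\Z$-algebra: reducedness is a constructible condition on the base, and since it holds on all but finitely many closed points of $\operatorname{Spec}\Z$, it must hold at the generic point, yielding reducedness over $\Q$ and over all but finitely many primes.

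For the initial scheme, the plan is to realize $\init Y$ as the special fiber of a Gr\"obner degeneration $Y \rightsquigarrow \init Y$ and to show that the splitting $\Tr(f^{p-1}\bullet)$ degenerates along with it to $\Tr((\init f)^{p-1}\bullet)$. Choose a $1$-parameter subgroup $\tau\colon \C^\times \to (\C^\times)^n$ whose weights on the coordinate functions induce the lex order on the finitely many monomials that appear in $f$ and in a Gr\"obner basis for the ideal of $Y$. The associated Rees construction produces a flat $\C[t]$-family whose fiber over $t \neq 0$ is $Y$ and whose fiber over $t=0$ is $\init Y$; applying the same rescaling to $f$ degenerates it to its lex-leading term $\init f = x_{i_1}\cdots x_{i_k}$. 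The Frobenius operators $\Tr(f_t^{p-1}\bullet)$ then fit together into a single Frobenius splitting of the total space of the family that compatibly contains the degeneration of $Y$; passing to the special fiber shows that $\init Y$ is compatibly split by $\Tr((\init f)^{p-1}\bullet)$. I expect this step, showing that compatible splitting really does descend through the one-parameter degeneration, to be the main technical obstacle.

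To finish, one analyses the limiting splitting directly. Since $(\init f)^{p-1} = x_{i_1}^{p-1}\cdots x_{i_k}^{p-1}$, a direct calculation of the trace shows that the only closed subschemes of $\C^n$ compatibly split by $\Tr((\init f)^{p-1}\bullet)$ are the coordinate subspaces $V(x_{j_1},\ldots,x_{j_m})$ with $\{j_1,\ldots,j_m\}\subseteq\{i_1,\ldots,i_k\}$ together with their arbitrary unions and intersections; equivalently, the reduced monomial schemes cut out by squarefree ideals in the variables $x_{i_1},\ldots,x_{i_k}$, with the remaining variables behaving as cone points. These are precisely the Stanley-Reisner schemes. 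Because $\init Y$ is defined by a monomial ideal (being a lex-initial scheme) and is compatibly split by this splitting, its generating monomials must be squarefree, and hence $\init Y$ is Stanley-Reisner, completing the plan.
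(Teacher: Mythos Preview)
The thesis does not give its own proof of this statement: it is quoted verbatim as Theorem~4 of the external reference \cite{Kfs} and used as a black box. So there is no ``paper's own proof'' to compare against here.

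That said, your outline is essentially the strategy of \cite{Kfs}: reducedness of compatibly split subschemes is the standard Mehta--Ramanathan fact, and the Stanley--Reisner conclusion is obtained by showing that the Frobenius splitting $\Tr(f^{p-1}\bullet)$ degenerates along the Gr\"obner family to $\Tr((\init f)^{p-1}\bullet)$, so that $\init Y$ is compatibly split with respect to the limiting monomial splitting. You correctly flag the degeneration of the splitting as the main technical step.

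One caution about your final paragraph: when $k<n$ the operator $\Tr((\init f)^{p-1}\bullet)$ is not literally a Frobenius splitting of $\mathbb{A}^n$ (it fails to send $1$ to $1$), so the analysis of its compatibly split subschemes is a bit more delicate than you indicate, and in particular the restriction you state---that only coordinate subspaces supported on $\{x_{i_1},\ldots,x_{i_k}\}$ appear---is not the right characterization. What survives, and what is actually needed, is that any monomial ideal compatibly ``split'' by such an operator must be radical, hence squarefree; that is enough to force $\init Y$ to be Stanley--Reisner. The cited paper handles this carefully, but your sketch would need adjustment at this point.
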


\begin{Exa} Using this method, we can construct a poset of matrix Schubert varieties $P=\{\overline{X_\pi}, \pi \in S_n\}$ as follows. Let $$f=\prod_{i=1}^{n-1} \text{det} M_{[i \times i]} \in \Z[x_{11},\ldots, x_{nn}]$$ where $M_{[i \times i]}$ denotes the $i \times i$ northwest submatrix. Start with $\{f=0\} \subseteq M_n$. Then decompose this subscheme, intersect the pieces, take unions, and repeat. This process produces all and only matrix Schubert varieties by \cite[\S 8.2]{Kfs}. The top element is the whole space and covering relations are given by containment.  See Figure \ref{fig:matrixSch}. The basic elements are indexed by those $\pi$ that are bi-Grassmannian. That is, their diagrams have only one rectangle, giving just one essential box and thus one rank condition.

\begin{figure}[htbp]
\begin{center}
\scalebox{0.7}{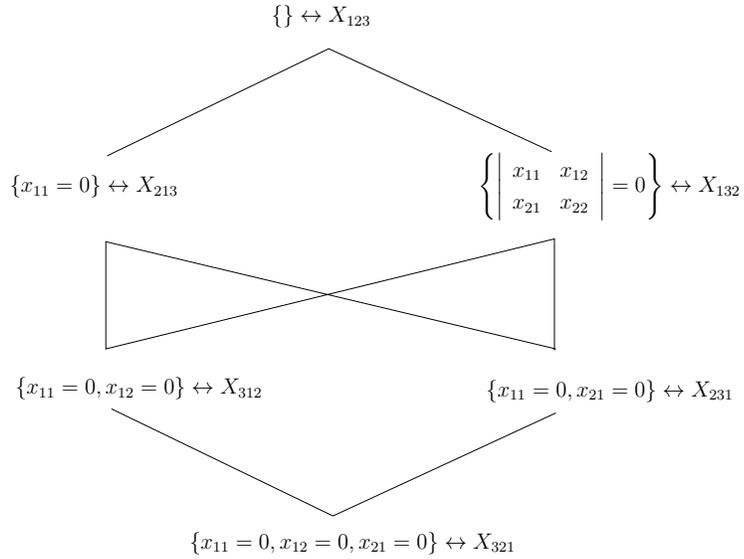}
\caption{Example of matrix Schubert construction for $n=2$.}\label{fig:matrixSch}
\end{center}
\end{figure}
\end{Exa}

In the case of matrix Schubert varieties, the intersections are always reduced. Note that this is not true for all $f$: for example, if $f=y(y-x^2)$, the intersection of the components $y=0$ and $y=x^2$ is not reduced due to the double point at $y=0$.

\begin{Thm}\cite{Sturm} Let $\pi$ be biGrassmannian. The determinants defining $\overline{X_{\pi}}$ are a Gr{\"o}bner basis for any antidiagonal term order. \end{Thm}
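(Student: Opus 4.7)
Since $\pi$ is bi-Grassmannian, its Rothe diagram consists of exactly one rectangle, so $\pi$ has a single essential box $(p,q)$ with rank bound $r := r_{pq}(\pi)$, and the ideal $I_\pi$ defining $\overline{X_\pi}$ is generated by the $(r+1)\times(r+1)$ minors of the $p\times q$ northwest submatrix of the generic matrix $\mathbf{x}$. The remaining entries of $\mathbf{x}$ are free variables irrelevant to the analysis. Thus the problem reduces to the classical question: do the size-$(r+1)$ minors of a generic $p\times q$ matrix form a Gr\"obner basis under any antidiagonal term order?

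My plan is to prove this via a Hilbert-function comparison between $\init I_\pi$ and the antidiagonal ideal $J_\pi$ (generated by the antidiagonal monomials of these minors). The inclusion $J_\pi \subseteq \init I_\pi$ is automatic, since each generator of $J_\pi$ is by definition the initial term of a generator of $I_\pi$, and each such initial term is a squarefree product of $r+1$ distinct variables. I would then establish equality by matching Hilbert series. On one hand, passage to the initial ideal preserves the Hilbert series, so $\init I_\pi$ has the Hilbert series of $I_\pi$. On the other hand, $J_\pi$ is a squarefree monomial ideal, so its Hilbert series is determined by its Stanley--Reisner complex; by the transversal-duality theorem of \cite{KM} and \cite{JM}, the facets of that complex are indexed by the reduced pipe dreams $\mathcal{RP}(\pi)$, and the pipe-dream formula for Schubert polynomials matches these counts with the multidegree of $\overline{X_\pi}$.

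A slicker variant bypasses the explicit combinatorial bookkeeping by invoking Theorem~\ref{thm:initprod} directly. Choosing $f$ to be the product of northwest square-submatrix determinants (as in the matrix Schubert example following that theorem), the variety $\overline{X_\pi}$ appears as a compatibly split subscheme of the hypersurface $\{f=0\}$, so Theorem~\ref{thm:initprod} immediately gives that $\init I_\pi$ is a Stanley--Reisner (hence squarefree and radical) ideal. Combining this radicality with the inclusion $J_\pi \subseteq \init I_\pi$ and a dimension check reduces the problem to showing the two ideals have the same minimal primes; because $J_\pi$ already carries the correct dimension (the single rectangular rank condition giving codimension $(p-r)(q-r)$), equality follows, which is exactly the Gr\"obner basis conclusion.

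The main obstacle is showing that no extra leading terms appear in $\init I_\pi$ beyond those of $J_\pi$, i.e.\ that Buchberger's criterion holds for the minors themselves with no new leading terms produced by $S$-polynomial reductions. The bi-Grassmannian hypothesis is what makes this step clean: a single rectangular rank condition yields a symmetric family of minors whose antidiagonal initial terms form a combinatorially self-contained system, without the interference that arises when multiple essential rank conditions cut out the same variety. For fully general matrix Schubert varieties this is the content of the Knutson--Miller theorem \cite{KM}, which uses the full subword-complex machinery described in \S\ref{sec:swc}, but the bi-Grassmannian subcase treated here is the classical one originally due to Sturmfels.
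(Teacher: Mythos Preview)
The paper does not give its own proof of this statement: it is simply cited to Sturmfels \cite{Sturm} with no argument supplied, so there is nothing in the paper to compare your sketch against line by line.

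That said, your proposal is not Sturmfels's argument. Sturmfels's 1990 proof is a direct combinatorial one based on the Knuth--Robinson--Schensted correspondence and standard monomial theory for determinantal ideals; it predates pipe dreams, subword complexes, and Frobenius-splitting degeneration. Your first approach (Hilbert-series matching via the pipe-dream description of the Stanley--Reisner complex of $J_\pi$) is essentially the Knutson--Miller strategy of \cite{KM03}, which is correct but logically downstream of \cite{Sturm} in the historical development. Your ``slicker variant'' invoking Theorem~\ref{thm:initprod} is also anachronistic in the same way, and as written has a small gap: knowing that $J_\pi \subseteq \init I_\pi$ with both radical and of the same codimension is not by itself enough to force equality---you still need equidimensionality of $\init I_\pi$ together with a degree (or multiplicity) match, which is exactly the content you deferred to the ``main obstacle'' paragraph. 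So your sketch correctly identifies where the work lies but does not actually discharge it.

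In short: the paper treats this as a black-box classical input; your reconstruction is a reasonable modern outline but is neither the paper's proof (there is none) nor Sturmfels's original.
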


Fulton proved in \cite{Fulton} that concatenating the ideals of the set of basic elements defined by particular $\pi$ give the ideal for $\pi$. The following theorem gives the same idea for Gr{\"o}bner bases. We will apply it to the case of $G=GL_n$.

\begin{Thm} \cite[Theorem 7]{Kfs} \label{thm:kl}
  Fix $v\in W$, and a reduced word $Q$ for $v$.
  Then the function $f$ on $\mathbb{A}^{\ell(v)}$ defined by
  $$ f(c_1,\ldots,c_{\ell(v)}) :=
  \prod_\omega m_\omega(\tilde\beta_Q(c_1,\ldots,c_{\ell(v)}))\qquad$$
 where $\omega$ is ranging over $G$'s fundamental weights,
  is of degree $\ell(v)$, and its lex-initial term is
  $\prod_i c_i$.

  Under the identification of $\mathbb{A}^{\ell(v)}$ with $X^v_\circ$,
  the divisor $f=0$ is the preimage of $\bigcup_\alpha X_{r_\alpha}$.
  By decomposing and intersecting repeatedly, we can produce
  all the other $X_{w\circ}^v$ from this divisor.
  If $I_w^Q$ is the ideal in $\Q[c_1,\ldots,c_{\ell(v)}]$
  corresponding to $X_{w\circ}^v$, then $\init I_w^Q$ is Stanley-Reisner.

\pagebreak

  We can produce a Gr\"obner basis for $I_w^Q$ by concatenating
  Gr\"obner bases for $I_{w'}^Q$, with $w' \leq w$ in Bruhat order,
  and $w'$ basic in {\em opposite} Bruhat order on $W$.
  (The basic elements of opposite Bruhat orders were computed
  in \cite{LS,GeckKim}.)
\end{Thm}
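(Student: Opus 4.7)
The plan is to identify $\tilde\beta_Q$ as the Bott--Samelson parametrization of the opposite Schubert cell and read off the leading term of $f$ from the root-subgroup description. For $Q=(s_{i_1},\ldots,s_{i_N})$ a reduced word for $v$ with $N=\ell(v)$, take
$$\tilde\beta_Q(c_1,\ldots,c_N) := u_{\alpha_{i_1}}(c_1)\dot s_{i_1}\cdots u_{\alpha_{i_N}}(c_N)\dot s_{i_N}\cdot B/B,$$
a biregular isomorphism $\mathbb{A}^N \to X^v_\circ$. For a fundamental weight $\omega$, let $m_\omega(g)$ be the matrix coefficient of the highest weight vector in the fundamental representation $V_\omega$; geometrically $\{m_\omega=0\}$ inside $X^v_\circ$ is the preimage of the complement of the big cell in $G/P_\omega$, and the vanishing locus $\{\prod_\omega m_\omega=0\}$ is exactly the pullback of $\bigcup_\alpha X_{r_\alpha}$.

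First I would compute $m_\omega\circ\tilde\beta_Q$ by induction on $N$. Each Bott--Samelson factor $u_{\alpha_{i_k}}(c_k)\dot s_{i_k}$ either multiplies the running highest-weight contribution by a factor of $c_k$ (when the current $W$-translate of $\omega$ pairs nontrivially with $\alpha_{i_k}^\vee$) or is absorbed without introducing $c_k$. Choosing the lex order with $c_1>c_2>\cdots>c_N$, the leading monomial of $m_\omega\circ\tilde\beta_Q$ is then $\prod_{k\in K_\omega}c_k$, where $K_\omega\subseteq\{1,\ldots,N\}$ records the ``active'' indices for $\omega$. The key combinatorial input is that the sets $K_\omega$ partition $\{1,\ldots,N\}$: along a reduced word for $v$, each letter $s_{i_k}$ acts nontrivially on exactly one fundamental weight's running translate. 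This yields $\deg f=N=\ell(v)$ and $\init f=\prod_i c_i$.

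With $f$ now satisfying the hypotheses of Theorem \ref{thm:initprod}, I would apply that theorem directly: any scheme obtained from $\{f=0\}$ by iterated component, intersection, and union operations has a Stanley--Reisner lex-initial ideal. The geometric identification of the hypersurface as the pullback of $\bigcup_\alpha X_{r_\alpha}$ means these schemes are precisely the $X^v_{w\circ}$ for $w\leq v$, so each $\init I_w^Q$ is Stanley--Reisner.

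Finally, for the concatenation statement, I would use that every $w\in W$ is the meet (in Bruhat order) of the basic-in-opposite elements $w'\leq w$ below it; the compatibility of this decomposition with Frobenius splitting produces a reduced scheme-theoretic equality $X^v_{w\circ}=\bigcap_{w'}X^v_{w'\circ}$. Because the lex-initial ideals are Stanley--Reisner, taking $\init$ commutes with these intersections, so $\init I_w^Q$ equals the sum of the $\init I_{w'}^Q$; this is exactly the statement that concatenating Gr\"obner bases for the $I_{w'}^Q$ yields one for $I_w^Q$. The main obstacle in the whole plan is the leading-term computation in the second paragraph: one must verify that each $c_k$ appears in the leading monomial of exactly one $m_\omega$, which comes down to a careful accounting of how a reduced word acts on each fundamental weight along the Bott--Samelson tower and choosing the lex order so that no cancellations occur in the leading term.
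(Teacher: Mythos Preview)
The paper does not prove this theorem at all: it is quoted verbatim as \cite[Theorem~7]{Kfs} and used as a black box, so there is no proof in the paper to compare your proposal against. Your outline is in the right spirit---it matches the Bott--Samelson setup the paper sketches in \S\ref{sec:oldtermorder} and correctly invokes Theorem~\ref{thm:initprod} for the Stanley--Reisner conclusion---but the leading-term step is not as clean as you suggest.

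Specifically, your claim that ``each letter $s_{i_k}$ acts nontrivially on exactly one fundamental weight's running translate'' is not literally true and does not by itself give the partition $\{1,\ldots,N\}=\coprod_\omega K_\omega$. In the paper's own worked example ($Q=12312$, \S\ref{sec:oldtermorder}) the letter $s_1$ occurs at positions $1$ and $4$, yet position $1$ contributes to $\init m_{\omega_1}=c_1c_3$ while position $4$ contributes to $\init m_{\omega_3}=c_4$; so the assignment of positions to fundamental weights is not determined by the letter alone. The actual argument (in \cite{Kfs}) is a more delicate induction on the word length, tracking how each factor $e_{\alpha_{i_k}}(c_k)\tilde r_{\alpha_{i_k}}$ moves the highest-weight line and showing that under the lex order the top monomial of each $m_\omega$ is squarefree and that these top monomials are coprime. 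Your inductive framework is correct, but you would need to replace the ``one running translate'' heuristic with a precise statement about which $c_k$ survives in the leading term of which $m_\omega$---for $GL_n$ this is most easily seen via the path/wiring-diagram model the paper describes just before the example.
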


In particular, $\init I_w^Q$ is the Stanley-Reisner ideal of a particular simplicial complex. In this thesis, we will consider a richer situation, where we start with the union over Schubert divisors, $D=\bigcup_{i=1}^n X_i$, inside $Gr_k \C^n$. This intersect/decompose/repeat process will then yield positroid varieties $Y$, as well as parallel results about Gr{\"o}bner bases and Stanley-Reisner ideals.

\section{Bott-Samelson Conditions in Opposite Bruhat Cells}\label{sec:oldtermorder}
Take the case $G=GL_n$ and let $Q$ be a reduced word in $S_n$ and $\Pi Q = \pi$. As in \cite[\S 3]{Kfs}, associated to $Q$ is a {\bf Bott-Samelson manifold} $$BS^Q := P_{\alpha_1} \times^B \cdots \times^B P_{\alpha_{\ell(\pi)}}/B$$
and birational map $\beta_Q: BS^Q \twoheadrightarrow X^\pi$, taking
$[p_1,\ldots,p_{\ell(\pi)}] \mapsto \left( \prod_{i=1}^{\ell(\pi)} p_i\right) B/B$.
In particular, we can use $\beta_Q$ to define an isomorphism from affine space to the opposite Bruhat cell \begin{eqnarray*}
\mathbb{A}^{|Q|} & \xrightarrow{~} &X_\circ^\pi \\
(c_1,\ldots,c_{|Q|}) &\mapsto& \left( \prod_{i=1}^{\ell(\pi)} (e_{\alpha_i}(c_i) \tilde r_{\alpha_i}) \right) B/B
\end{eqnarray*}
where the matrix $e_{\alpha_i}(c_i) \tilde r_{\alpha_i}$ represents the identity matrix modified so that the $2 \times 2$ block starting at $(i,i)$ has been replaced with $\left(
                                                                                                                          \begin{array}{cc}
                                                                                                                            c_i & -1 \\
                                                                                                                            1 & 0 \\
                                                                                                                          \end{array}
                                                                                                                        \right)$.

To calculate the matrix entries of $\beta_Q(c_1 \ldots c_{|\alpha|})$, we draw the wiring diagram for $Q$, labeling the cross that executes $s_i$ with the variable $c_i$. Then, we read paths left to right, where at each cross we can choose whether to follow the path southwest to northeast or northwest to southeast through the cross, or from northwest we can ignore the cross and go northeast. We assign the following weights: southwest to northeast has weight 1, northwest to southeast has weight -1, and northwest to northeast has weight $c_i$. We make a matrix where the $(i,j)$ entry is the sum over the weights of the paths from $i$ to $j$. Note that the individual weights are only homogeneous if $Q$ is 321-avoiding.

\begin{Exa} Let $Q= 12312$. Figure \ref{fig:akq} shows the calculation of the matrix $e_{\alpha_i}(c_i) \tilde r_{\alpha_i}$. One can check that this indeed matches the product of the individual matrices associated to each transposition. Then we get the following determinants:
\begin{itemize}
\item[$i=1$:] $c_1 c_3 - c_2 \ra \init = c_1 c_3$
\item[$i=2$:] $(c_1 c_3 - c_2)(-c_5) - c-3 (c_4 - c_1 c_5) \ra \init = c_2 c_5$
\item[$i=3$:] $c_4 \ra \init = c_4$
\end{itemize}

\begin{figure}[htbp]
\begin{center}
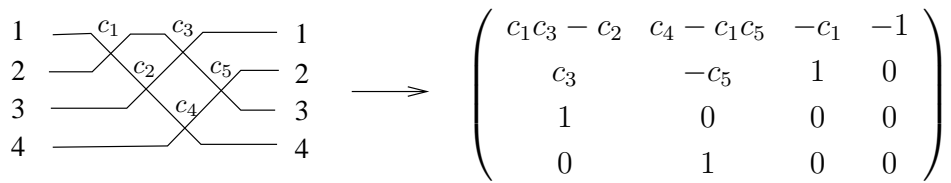
\caption{For $Q=12312$, $v=4312$. The product of the initial terms of the northwest determinants is $\prod_i c_i$. }
\label{fig:akq}
\end{center}
\end{figure}

Then, the product of the initial terms is $c_1 c_2 c_3 c_4 c_5$.

\end{Exa}

\begin{Thm}\cite[Theorem 7]{Kfs} For the matrix given by $\beta_Q(c_1 \ldots c_{|\alpha|})$, $$\init \left(\prod_{i} i \times i \text{ determinants} \right)= \prod_i c_i.$$ \end{Thm}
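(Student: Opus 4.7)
The plan is to recognize this as the $GL_n$ specialization of Theorem \ref{thm:kl}, so the quickest route is to identify the two statements. For $GL_n$, the fundamental weights $\omega_1, \ldots, \omega_{n-1}$ correspond to the exterior powers $\wedge^i \C^n$, and the matrix coefficient $m_{\omega_i}(g)$ taken against the highest weight vector $e_1 \wedge \cdots \wedge e_i$ and its dual is precisely the $i \times i$ northwest determinant of $g$. Under the identification $\mathbb{A}^{|Q|} \xrightarrow{\sim} X^v_\circ$ furnished by $\beta_Q$, the polynomial $f$ of Theorem \ref{thm:kl} becomes $\prod_{i=1}^{n-1} \det(\mathrm{NW}_i(\beta_Q(c_1, \ldots, c_{|Q|})))$, and its lex-initial term is therefore $\prod_i c_i$ by that theorem.

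For a self-contained argument I would proceed by induction on $|Q|$, establishing the finer claim $\init(\det(\mathrm{NW}_k(\beta_Q))) = \prod_{j : \alpha_j = k} c_j$ for each $k$; multiplying these across $k$ then yields the desired $\prod_j c_j$. Writing $Q = Q' \cdot s_\alpha$ with $\alpha = \alpha_{|Q|}$, right-multiplication of $\beta_{Q'}$ by $e_\alpha(c_{|Q|}) \tilde{r}_\alpha$ modifies only columns $\alpha$ and $\alpha+1$ via the block $\left(\begin{smallmatrix} c_{|Q|} & -1 \\ 1 & 0 \end{smallmatrix}\right)$. By multilinearity of the determinant, $\det(\mathrm{NW}_k(\beta_Q)) = \det(\mathrm{NW}_k(\beta_{Q'}))$ when $k \neq \alpha$ (for $k > \alpha$ the cancellation uses $\det\left(\begin{smallmatrix} c_{|Q|} & -1 \\ 1 & 0 \end{smallmatrix}\right) = 1$), while for $k = \alpha$ one obtains $\det(\mathrm{NW}_\alpha(\beta_Q)) = c_{|Q|}\, \det(\mathrm{NW}_\alpha(\beta_{Q'})) + M_2$, where $M_2$ is a non-northwest $\alpha \times \alpha$ minor of $\beta_{Q'}$ using column $\alpha+1$ in place of column $\alpha$.

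The case $k \neq \alpha$ follows immediately from induction. For $k = \alpha$, by induction the leading term of the first summand is $c_{|Q|} \cdot \prod_{j < |Q| : \alpha_j = \alpha} c_j = \prod_{j \leq |Q| : \alpha_j = \alpha} c_j$, matching the target. The main obstacle is verifying that no term of $M_2$ is lex-larger than this monomial. I would handle this via the wiring-diagram path interpretation described just above the theorem: matrix entries of $\beta_{Q'}$ are signed sums of path weights, so monomials in $M_2$ are indexed by families of signed paths, and one checks that the lex-maximizing path family for this non-NW minor is forced to use strictly lex-later crosses than those appearing in the NW minor, so its leading monomial is dominated. Once this is cleared, the strengthened claim follows, and the original statement results from the multiplication across $k$.
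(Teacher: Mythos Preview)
The paper does not give its own proof here; the theorem is simply cited from \cite[Theorem 7]{Kfs}, the same source as Theorem~\ref{thm:kl}. Your first paragraph---identifying $m_{\omega_i}$ in type $A$ with the $i\times i$ northwest minor and thereby reading the statement off as the $GL_n$ case of Theorem~\ref{thm:kl}---is exactly this reduction and is correct.

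Your self-contained inductive argument is extra and not in the paper. The refined claim $\init\bigl(\det\mathrm{NW}_k(\beta_Q)\bigr)=\prod_{j:\alpha_j=k}c_j$ is true (one can verify it directly in the paper's example $Q=12312$, modulo what appears to be a swapped labeling of $c_3$ and $c_4$ in the figure), and the inductive setup---right-multiply by the last factor, split the $k=\alpha$ determinant as $c_{|Q|}P_1+M_2$---is sound. The weak point is the final comparison: you need $\init(P_1)\ge\init(M_2)$ in lex on $c_1,\ldots,c_{|Q|-1}$, and this you only assert via ``one checks'' with the path model. That inequality is the whole content of the step and requires an actual argument (for instance, exhibit the lex-maximal nonintersecting path families for the two minors and compare them explicitly, or argue that replacing target column $\alpha$ by $\alpha+1$ forces the earliest cross used by the maximizing family to move strictly later in $Q'$). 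As written this is a gap in the alternative proof, though not in your primary derivation from Theorem~\ref{thm:kl}.
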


\subsection{Kazhdan-Lusztig Varieties}
We define a {\bf Kazhdan-Lusztig variety} as $X_{w^\circ}^v := X_w \cap X_\circ^v$. We can obtain Kazhdan-Lusztig varieties from Schubert patches by factoring out a vector space, and results about Schubert patches are often simpler when considered on Kazhdan-Lusztig varieties.
\begin{Lem}\cite[Lemma A.4]{KL} For $X_\pi \subseteq G/B$, $$X_\pi |_\lambda \cong (X_{\pi^\circ}^\lambda)\times (X_\lambda^\circ) \cong (X_\nu^\circ) \times (X_\lambda \cap X^\nu_\circ)$$ \end{Lem}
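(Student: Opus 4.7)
The plan is to prove the standard Kazhdan--Lusztig ``slice'' theorem: the Schubert patch $X_\pi|_\lambda$ is a trivial fibration over the Schubert cell $X_\lambda^\circ$ with fiber the Kazhdan--Lusztig variety $X_{\pi\circ}^\lambda$. First I would identify the permuted big cell $\lambda N_- B/B$ with the unipotent group $U_\lambda := \lambda N_- \lambda^{-1}$ via $u \mapsto u\lambda B/B$; this is an isomorphism of varieties since $U_\lambda \cap \lambda B \lambda^{-1} = \{e\}$ and a dimension count then forces surjectivity.

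The heart of the argument is a root-subgroup factorization of $U_\lambda$. Set $U_\lambda^\pm := U_\lambda \cap N_\pm$; a choice of ordering on the roots (with negatives first) yields a product-of-varieties isomorphism
\[
U_\lambda^- \times U_\lambda^+ \xrightarrow{\;\sim\;} U_\lambda, \qquad (u^-, u^+) \mapsto u^- u^+,
\]
with $\dim U_\lambda^- = \dim G/B - \ell(\lambda)$ and $\dim U_\lambda^+ = \ell(\lambda)$, obtained by counting the inversions of $\lambda$. A tangent-space (or direct $B_-$-orbit) argument identifies $U_\lambda^- \times \{e\}$ with the Schubert cell $X_\lambda^\circ$ (using $U_\lambda^- \subset B_-$), and $\{e\} \times U_\lambda^+$ with the opposite Schubert cell $X_\circ^\lambda$, which is already contained in the permuted big cell.

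Finally, since $X_\pi$ is the closure of a $B_-$-orbit it is $U_\lambda^-$-stable under left multiplication, and in the chosen coordinates this action is translation in the first factor. Hence
\[
X_\pi|_\lambda \;=\; U_\lambda^- \;\times\; \bigl(X_\pi \cap (\{e\}\times U_\lambda^+)\bigr) \;=\; X_\lambda^\circ \;\times\; X_{\pi\circ}^\lambda,
\]
which is the first isomorphism. The second isomorphism follows from a symmetric application of the same argument, with $\nu$ playing the dual role to $\lambda$ and $X_\circ^\nu$ serving as the transverse cell.

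The main obstacle is the root-subgroup factorization step: one must verify that $U_\lambda^\pm$ are actually subgroups (not merely unions of root subgroups), which reduces to showing that the two root subsets are closed under addition within the roots appearing in $U_\lambda$. Equally critical is the ordering of the factors; only with $U_\lambda^-$ on the left does the $U_\lambda^-$-action on $U_\lambda^- \times U_\lambda^+$ reduce to pure translation of the first coordinate, which is what converts the $B_-$-stability of $X_\pi$ directly into a product decomposition. With the opposite ordering the action would also conjugate the $U_\lambda^+$-factor, and one would need an extra change of coordinates to absorb that twist before recovering the clean product structure.
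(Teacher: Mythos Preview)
The paper does not supply its own proof of this lemma; it is simply quoted from \cite[Lemma A.4]{KL} and used as a black box. So there is nothing in the paper to compare your argument against line by line.

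That said, what you have written is essentially the original Kazhdan--Lusztig argument: identify the permuted big cell with $U_\lambda=\lambda N_-\lambda^{-1}$, factor $U_\lambda=U_\lambda^-\,U_\lambda^+$ via root subgroups, recognize the two factors as $X_\lambda^\circ$ and $X_\circ^\lambda$, and use $B_-$-stability of $X_\pi$ to split off the $U_\lambda^-$ factor. Your caveats about closure of the root subsets and the ordering of the factors are the right ones, and the argument goes through. One small point: for the second isomorphism the paper leaves $\nu$ undefined in the statement itself, so ``symmetric application with $\nu$ playing the dual role'' is about as precise as one can be without further clarification of what $\nu$ is meant to be.
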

\noindent where, in the last term, $X_\nu^\circ$ tells us about the terms above $\lambda$ in the poset, and $X_\lambda \cap X^\nu_\circ$ tells about the terms below.
The stratification $X_{w,\circ}^v = \amalg_{x \geq w} X_{w,\circ}^{v,\circ} $ of these varieties is ``generated" by $X_\circ^v \cap \{X_{s_\alpha}\}$, for $s_\alpha$ a simple reflection. Let $f$ be the product of determinants of $k \times k$ submatrices in the upper left and lower right corners of the matrix. Then this stratification can be calculated starting with the hypersurface $f=0$, then intersecting, decomposing, and repeating. One reason that this stratification is of interest is that the closed strata are the compatibly Frobenius-split subvarieties, a fact we neither prove nor use.

\subsection{Positroid Varieties}\label{sec:posvars}

The permutation matrix for a bounded juggling pattern $f$ is a $\Z \times \Z$ matrix with a $1$ in row $i$, column $i+f(i)$. Define a {\bf diagram} crossing out all boxes {\it strictly} south or west of each 1. Note that all 1's are in a strip between the diagonals $j=i$ and $j=i+n$, with period $n$.
Then the corresponding {\bf positroid variety} is defined as
\begin{displaymath}\Pi_f=GL_k \setminus \bigg\{  M  \subseteq M_{k,n} \mid \rank([i,j]) \leq | [i,j]| - \# \textrm{ 1's southwest of } (i,j), i\leq j \leq i+n \bigg\}\end{displaymath}
where $\rank([i,j])$ denotes the rank of the submatrix defined by columns $i$ to $j$, cyclically.
Then the corresponding {\bf open positroid variety} is defined as
\begin{displaymath}\Pi_f^\circ=GL_k \setminus \bigg\{  M  \subseteq M_{k,n} \mid \rank([i,j]) = | [i,j]| - \# \textrm{ 1's southwest of } (i,j), i\leq j \leq i+n \bigg\}\end{displaymath}
 We get a condition for each point $(i,j)$, $i \leq j$, but some of these conditions imply the others.  It is sufficient to just consider the  {\bf essential set} of this diagram, where a condition on an interval $[i,j]$ being essential means that if you shrink the interval, the rank condition stays the same; when you enlarge the interval, the rank goes up. Graphically, the essential set is the northeast corners of the bounded regions in the diagram. This construction gives {\it cyclic} rank conditions, and we need only consider one period because the diagram repeats every $n$.

\begin{figure}[htbp]
\begin{center}
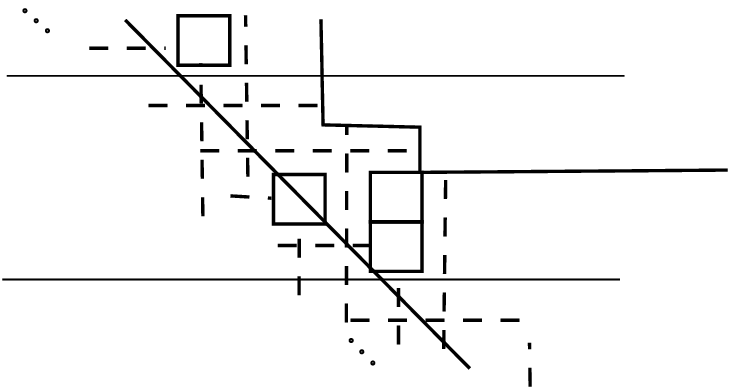
\caption{Diagram for the juggling pattern 3401.}
\label{fig:3401}
\end{center}
\end{figure}

\begin{Exa}\label{ex:positroid} Let $f=3401$. In order to get the associated positroid variety $\Pi_f$, we draw the diagram as in Figure \ref{fig:3401}. Then there are two essential boxes, $(3,3)$ and $(3,5)$, which give the rank conditions $\rank[3,3] \leq 0$ and $\rank[3,1] \leq 1$. Note that the condition from the non-essential box $(4,5)$, $\rank[4,1] \leq 1$, is implied by $\rank[3,1] \leq 1$.
\end{Exa}

\begin{Lem} Rank conditions on the column interval $[i,j]$ correspond to the number of arcs entering the range $[i,j]$, or equivalently to the number of arcs leaving the range $[i,j]$. \end{Lem}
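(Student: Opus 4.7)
My plan is to translate the rank-bound function $|[i,j]| - \#\,\text{1's SW of }(i,j)$ into arc language using the dictionary that each $1$ of the matrix at position $(a, f(a))$ represents the arc $a \to f(a)$, and then to count arcs in two ways.

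The key identification is that under the bounded condition $a \le f(a) \le a+n$, the $1$'s lying weakly SW of $(i,j)$ -- that is, at positions $(a,b)$ with $a \ge i$ and $b \le j$ -- correspond exactly to the arcs with both endpoints in $[i,j]$. Indeed, if $f(a) \le j$ then $a \le f(a) \le j$, so $a \ge i$ already forces $a \in [i,j]$, and likewise $f(a) \in [a,j] \subseteq [i,j]$; conversely any arc internal to $[i,j]$ sits at a position weakly SW of $(i,j)$. Hence $\#\,\text{1's SW of }(i,j)$ equals the number of arcs internal to $[i,j]$.

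Now I count arcs in two ways. Each of the $|[i,j]|$ integers in $[i,j]$ is the source of exactly one arc, which either lands in $[i,j]$ (internal) or lands outside (leaving), giving $|[i,j]| = \#(\text{internal}) + \#(\text{leaving})$. Counting by target instead yields $|[i,j]| = \#(\text{internal}) + \#(\text{entering})$, since each of the $|[i,j]|$ integers in $[i,j]$ is the target of exactly one arc. Subtracting the internal count from $|[i,j]|$ in each equation gives
\[
|[i,j]| - \#\,\text{1's SW of }(i,j) \;=\; \#(\text{arcs leaving } [i,j]) \;=\; \#(\text{arcs entering } [i,j]),
\]
which is the claim. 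I do not expect a serious obstacle; the only slightly delicate step is matching ``weakly SW'' with \emph{internal} arcs (rather than some other combination of internal, leaving and entering arcs), which hinges on the bounded condition $f(a) \ge a$ that confines all $1$'s to the strip on and above the main diagonal. A quick sanity check against Example \ref{ex:positroid} ($f=3401$, essential box $(3,5)$) confirms the arithmetic: the internal arcs $3\to 3$ and $4\to 5$, together with the unique entering arc $1\to 4$ and unique leaving arc $5 \to 8$, reproduce the rank bound $|[3,5]| - 2 = 1$.
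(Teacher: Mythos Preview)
Your proof is correct and follows essentially the same approach as the paper's own proof. Both identify the $1$'s weakly southwest of $(i,j)$ with the ``internal'' throws (those starting and ending in $[i,j]$), then obtain the two equalities by counting the $|[i,j]|$ sources and the $|[i,j]|$ targets separately; your write-up is simply more explicit about why the boundedness $a\le f(a)$ forces the SW $1$'s to be exactly the internal arcs, and spells out the double-counting that the paper compresses into its chain of equalities.
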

\begin{proof} This follows from the fact that $\# \{ \textrm{ 1's southwest of } (i,j), i\leq j \leq i+n\}$ is equal to the number of throws starting weakly after $i$ and ending weakly before $j$. Then,

\begin{eqnarray*} \rank[i,j] &\leq &|[i,j]| - \#\{\text{throws starting after } i  \text{ and ending before } j \} \\
&=& \#\{\text{throws starting in } [i,j] \text{ and ending strictly after } j \}\\
&=& \#\{\text{throws starting strictly before } i \text{ and ending in } [i,j] \}
\end{eqnarray*}
\end{proof}

\begin{Thm}\cite{KLS} Patterns of totally non-negative matrices correspond to bounded juggling patterns. Furthermore, every juggling pattern arises this way. \end{Thm}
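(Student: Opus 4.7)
The plan is to exhibit an explicit map $M \mapsto f_M$ from the rank patterns of TNN matrices in $M_{k \times n}^{\geq 0}(\R)$ to bounded juggling patterns, show that the defining rank conditions of $\Pi_{f_M}$ are exactly those satisfied by $M$, and then produce a TNN matrix in every positroid stratum to get surjectivity. Given $M$ with columns $v_1,\ldots,v_n$ extended cyclically by $v_{i+n} = v_i$, define
\[
f_M(i) := \min\{j \geq i : v_i \in \text{span}(v_{i+1},\ldots,v_j)\},
\]
with the convention $f_M(i) = i$ when $v_i = 0$ (an empty hand). The bound $f_M(i) \leq i+n$ holds because the column span of $M$ is recovered within any cyclic window of $n$ consecutive columns, and the periodicity $f_M(i+n) = f_M(i)+n$ is immediate, so $f_M$ is at least a bounded function with $i \leq f_M(i) \leq i+n$.

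Next, I would verify that $f_M$ is a bijection, hence an honest bounded juggling pattern. This is the main obstacle: generically a rank-$k$ matrix has a ``landing function'' that is only a multifunction, and only total non-negativity forces injectivity. Suppose for contradiction $f_M(i) = f_M(i') = j$ for some $i < i' \leq j$; by minimality, both $v_i$ and $v_{i'}$ lie in $\text{span}(v_{i+1},\ldots,v_j)$ but not in $\text{span}(v_{i+1},\ldots,v_{j-1})$, so each uses $v_j$ with a nonzero coefficient. A three-term Pl\"ucker-type identity among the $k \times k$ minors of $M$ then forces a sign cancellation; combined with the non-negativity of all minors, this cancellation propagates the vanishing of additional minors until it contradicts the minimality of $j$ in the definition of $f_M(i')$. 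This positivity argument is the technical heart of the proof.

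Having established $f_M \in \hat S_n$ is a bounded juggling pattern, I would verify the rank conditions match. By the preceding lemma,
\[
\text{rank}(M_{[i,j]}) = |[i,j]| - \#\{\text{arcs of }f_M\text{ starting and ending inside }[i,j]\},
\]
and the right-hand side equals the bound in the definition of $\Pi_{f_M}$; this follows by a short induction on $j - i$, since each time an index $a \in [i,j]$ satisfies $f_M(a) \in [i,j]$, it records precisely one new linear dependency among $v_i,\ldots,v_j$. For surjectivity onto all bounded juggling patterns, I would produce for each $f$ an explicit TNN matrix $M_f$ with $f_{M_f} = f$: using the Bott-Samelson/Chevalley parameterization of $\S\ref{sec:oldtermorder}$ with strictly positive parameters (equivalently the Le-diagram parameterization developed later in the thesis), the resulting matrix is a product of non-negative Chevalley exponentials and is therefore TNN, and lies in the open stratum $\Pi_f^\circ$ by construction, completing the correspondence.
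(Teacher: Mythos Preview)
The paper does not prove this theorem; it is stated with a citation to \cite{KLS} and no argument is given. What the paper \emph{does} prove, in the Lemma immediately following, is that the map $M \mapsto f_M$ you describe yields a bounded juggling pattern for \emph{every} rank-$k$ matrix $M$, with no total non-negativity hypothesis whatsoever.

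This exposes a genuine misconception in your proposal. You write that ``generically a rank-$k$ matrix has a `landing function' that is only a multifunction, and only total non-negativity forces injectivity,'' and you identify this as the technical heart of the proof. That claim is false. The paper's Lemma gives the injectivity of $f_M$ by pure linear algebra: if $f_M(i_1) = f_M(i_2) = j$ with $i_1 < i_2$, then each of $v_{i_1}, v_{i_2}$ has a dependency on $v_{i_1+1},\ldots,v_j$ in which $v_j$ appears with nonzero coefficient; a suitable linear combination of these two relations eliminates $v_j$ and produces a dependency of $v_{i_1}$ on $v_{i_1+1},\ldots,v_{j-1}$, contradicting minimality. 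No Pl\"ucker relations, no sign arguments, no positivity.

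Consequently your proposed ``main obstacle'' is not where the content lies. The substantive assertion of the cited theorem is rather that the TNN cell decomposition of $Gr_k(\R^n)^{\geq 0}$ coincides with the open positroid stratification --- i.e.\ that two TNN matrices have the same sign pattern of Pl\"ucker coordinates if and only if they lie in the same $\Pi_f^\circ$, and that each $\Pi_f^\circ$ meets the TNN part nontrivially. Your surjectivity sketch via Le-diagram/Bott--Samelson parametrizations is in the right spirit for the second half, but the first half (that the TNN cells are exactly cut out by cyclic rank conditions, not finer) is what requires the positivity machinery of \cite{KLS} and \cite{P}, and your outline does not address it.
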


In Figure \ref{fig:gr24plus} we show the decomposition of $Gr_k \C^n$ into open positroid varieties for $n=4$ and $k=2$, indexed by juggling patterns. We will call a juggling pattern {\bf basic} if its corresponding positroid variety has only a single rank condition. This occurs when the diagram for $f$ has a single essential box. For a pattern of length $n$ with $k$ balls, and single rank condition $\rank [i,j] \leq r$, this corresponds to a juggling pattern of the form
$$ f = k^{(i-1)-(k-r)} (|[i,j]| - r+k)^{k-r} r^{ |[i,j]| - r} k^{n-j+r}$$
That is, a series of $k$-throws, followed by a set of high throws, then a set of low throws, then back to $k$'s.

\begin{figure}[htbp]
\begin{center}%
\scalebox{0.25}{\rotatebox{90}{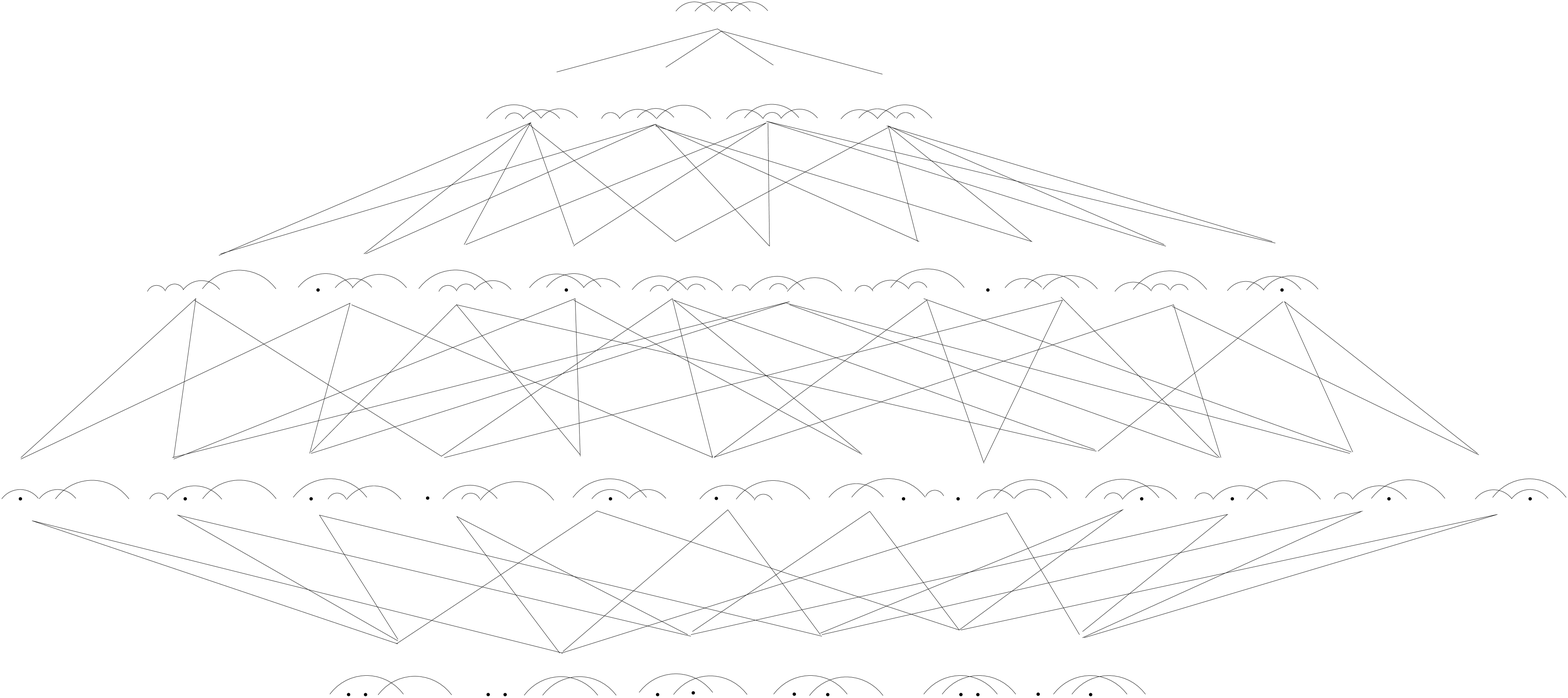}}
\caption{The poset of cells of the totally nonnegative part of $Gr_2 \C^4 (\R)$, with cells indexed by juggling patterns.}\label{fig:gr24plus}
\end{center}
\end{figure}

We study open patches on positroid varieties, indexed by $\lambda \in \binom{[n]}{k}$. As in the case of Schubert varieties, we denote the patch on $\Pi_f$ centered at $\lambda$ by $\Pi_f |_\lambda$. (There is no analogue of the lemma of \cite{KL}.) Recall that in the matrix description, $\lambda$ is a $k$-subset of the columns and we set column $\lambda_i$ to the $i^{th}$ column of the identity matrix $I_k$, leaving the rest of the entries free. Note that if we pick $\lambda$ such that setting columns $\lambda_i$ to the identity gives a matrix that violates the rank conditions from $f$, then $\lambda \notin \Pi_f$ and $\Pi_f|_\lambda$ is empty. Non-empty patches centered at $\lambda$ correspond to the $T$-fixed points on the Grassmannian that lie inside a particular positroid variety $\Pi_f$.
\begin{cont_ex}{\bf\ref{ex:positroid}. }
For $\lambda=(1,2)$, we get rank conditions $rank[3,3] \leq 0$ and $\rank[3,1] \leq 1$ so the variety is composed of matrices of the form $$\left(
                                  \begin{array}{cccc}
                                    1 & 0 & 0 & \star \\
                                    0 & 1 & 0 & 0 \\
                                  \end{array}
                                \right)$$
where the entry $\star$ is free.
\end{cont_ex}

Given a matrix $M \in M_{k \times n}$ of rank $k$, we can determine which open positroid variety its row span lives in by the following method. For column $i$, look for the first column $j$ cyclically after $i$ such that column $i$ is dependent on columns $i+1, \ldots,j \mod n$. Then to this we associate a throw from $i$ to $j$, to construct a list of throws, $f$, indexing the positroid.
\begin{Lem} As constructed, this $f$ is a juggling pattern. \end{Lem}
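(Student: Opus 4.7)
The plan is to verify the three defining properties of a juggling pattern: $n$-periodicity $f(i+n)=f(i)+n$, the bounds $i\le f(i)\le i+n$, and bijectivity of $f:\Z\to\Z$. Periodicity is immediate from the cyclic construction, since column indices are read modulo $n$. The lower bound $f(i)\ge i$ is built into the definition. For the upper bound, I would use that $M$ has rank $k$, so its columns span $\C^k$; in particular col $i$ lies in the span of cols $i+1,\ldots,i+n$ (cyclically these are all $n$ columns, including col $i$ itself), so $f(i)\le i+n$.

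The real content is bijectivity. By periodicity, it is equivalent to show that $\{f(1),\ldots,f(n)\}$ is a complete residue system modulo $n$, which I would establish by producing a unique preimage for each column $j$. The key tool is the following matroid-exchange reformulation: for $i<j\le i+n$,
\begin{equation*}
f(i)=j \;\iff\; \text{col }j\in\text{span}(\text{col }i,\ldots,\text{col }j-1)\setminus\text{span}(\text{col }i+1,\ldots,\text{col }j-1).
\end{equation*}
This is the usual basis-exchange: writing col $i=\sum_{k=i+1}^{j}c_k\,\text{col }k$, minimality of $j$ forces $c_j\ne 0$, which lets us solve for col $j$ in terms of cols $i,i+1,\ldots,j-1$; the converse implication is symmetric, and the non-containment conditions match up by the same substitution.

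Given the reformulation, I would fix $j$ and study the nested chain
\begin{equation*}
\{0\}\subseteq\text{span}(\text{col }j{-}1)\subseteq\text{span}(\text{col }j{-}2,\text{col }j{-}1)\subseteq\cdots\subseteq\text{span}(\text{col }j{-}n,\ldots,\text{col }j{-}1).
\end{equation*}
The final term equals the full column span $\C^k$ because col $j-n$ coincides with col $j$ cyclically, so all $n$ columns appear; in particular col $j\in\C^k$ eventually enters the chain. Let $m$ be the smallest index for which col $j\in\text{span}(\text{col }j{-}m,\ldots,\text{col }j{-}1)$. By the reformulation, $f(j-m)=j$, and $m$ is unique. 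Thus each $j\in\{1,\ldots,n\}$ has a unique preimage $i\in\{j-n,\ldots,j-1\}$; reducing $i$ modulo $n$ yields the required bijection of residues.

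The step I expect to be the main obstacle is stating the exchange reformulation cleanly and managing the cyclic index bookkeeping. The boundary case $m=n$ (corresponding to an $n$-throw $f(j)=j+n$, which arises exactly when col $j$ is not in the span of the other $n-1$ columns) and the degenerate case col $j=0$ (giving a $0$-throw $f(j)=j$) need to be checked separately, but they fit the same pattern once one is careful with the cyclic conventions.
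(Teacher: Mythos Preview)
Your argument is correct, but it takes a different route from the paper's. The paper proves injectivity directly: assuming $f(i_1)=f(i_2)=j$ with $i_1<i_2<j$, it takes the two dependence relations $\sum_{k=i_1}^{j}c_k v_k=0$ and $\sum_{k=i_2}^{j}c'_k v_k=0$ (each with nonzero leading and $j$-th coefficients), scales and adds them to cancel the $v_j$ term, and obtains a relation among $v_{i_1},\ldots,v_{j-1}$ with nonzero $v_{i_1}$-coefficient, contradicting the minimality of $j$ for $i_1$; surjectivity then falls out of periodicity. Your approach instead recasts $f(i)=j$ as a basis-exchange condition on column $j$ and reads off the unique preimage from the first jump in the filtration $S_m=\operatorname{span}(\text{col }j{-}m,\ldots,\text{col }j{-}1)$. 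What you gain is an explicit description of $f^{-1}$ and a cleaner treatment of the edge cases $f(j)=j$ and $f(j)=j+n$, at the cost of setting up the exchange reformulation; the paper's argument is shorter once one sees the cancellation trick, but its write-up leaves the scaling step implicit.
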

\begin{proof}
$f$ satisfies $f(i) \in [i,i+n]$ and $f(i+n) = f(i)+n$. Suppose $v_{i_1}$ and $v_{i_2}$ both have $v_j$ as the first dependent vector for $i_1 < i_2 < j$, then $0 = \sum_{k=i_1}^{j} c_k v_k$ where $c_{i_1} \neq 0$ and $c_j \neq 0$ and  $0 = \sum_{k=i_2}^{j} c'_k v_k$ where $c'_{i_2} \neq 0$ and $c'_j \neq 0$. Then we can combine these two sums into $0 = \sum_{k=i_1}^{j} (c_k+c'_k) v_k$ where $(c_{i_1}+c'_{i_1}) \neq 0$ and  $(c_{i_2}+c'_{i_2} )\neq 0$ ($c_j$ may equal 0). Then $v_{i_2}$ is dependent on $v_{i_1}$, contradicting the first dependent vector choice. Then $f$ is one-to-one. $f$ is onto by periodicity.
\end{proof}

All matrices in a given positroid variety have the same cyclic rank structure, so we choose to index the variety by the associated juggling pattern.

\section{Affine Flags}\label{sec:affineflags}

Our reference for this section is \cite{PS}.

A {\bf lattice} L is a linear subspace of $\C[[t^{-1}]][t]$, where the codimension of $(L \cap \C[[t^{-1}]][t])$ in $L$ is finite, representing the number of balls in the air, and the codimension of $L \cap \C[[t^{-1}]][t]$ in $\C[[t^{-1}]][t]$ is also finite, representing the balls in the air traveling backwards in time (antiballs). We define the difference in these codimensions to be the {\bf index}, and it equals the net number of balls in the pattern. See Figure \ref{fig:index}. We call $t^c \in \Z^n$ {\bf translation elements}.

\begin{figure}[htbp]
\begin{center}
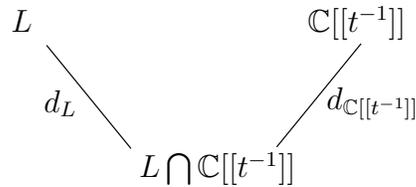
\caption{The index of $L$ is $d_L-d_{\C[[t^{-1}]]}$.}
\label{fig:index}
\end{center}
\end{figure}

We define the $k^{th}$ component of the {\bf affine Grassmannian} as $$\ag{k,n} = \{L \mid L \textrm{ lattice, } t^{-n}L \subseteq L, \textrm{ and } index(L)=k \}$$ and the { \bf affine flag manifold} as $$\af{k,n}=\{ (\ldots, L_0, L_1, \ldots, L_n, L_{n+1}, \ldots) \mid L_i \in \ag{k,n}, L_i=L_{i+n}, t^{-1} L_{i} \subseteq L_{i+1} \}$$ Note that the last condition implies that $t^{-n} L_i \subset L_{i+n}$. Our notation differs from the usual definition of a lattice, $L \subset \C[[z^{-1}]][z] \otimes V^{(n)}$ with the condition that $z^{-1} L \subseteq L$. We can correspond these definitions by $t^{ni+j}\longleftrightarrow z^i \otimes \overrightarrow{v_j} $. One benefit of the usual definition is to see that the affine flag is just $G/B$, where the relevant $G$ is $GL_n( \C[[z^-1]][z] )$, acting on $\C[[z^-1]][z] \otimes \C^n$ and we restrict $B$ and $P$ to matrices with only negative powers of $z$. Then we preserve the lattice condition that $z^{-1} L \subseteq L$. This lets us then use the Bruhat and opposite Bruhat decompositions from $G/B$.

Given $\lambda \in \binom{[n]}{k}$, we can associate to it a list of states $\Lambda= (\Lambda_1,\ldots,\Lambda_n)$, where $\Lambda_1$ has $\times$'s in the entries of $\lambda$ and $-$'s elsewhere, and $\Lambda_i$ is the $i^{th}$ rotation of $\lambda$.  We let {\bf $t_\lambda$} be the corresponding flag, where we construct the lattice as follows: $\times$ in the $j^{th}$ entry of $\lambda_i$ goes to the term $\C[[t^{-1}]] \oplus \C \cdot t^j$.
\begin{Exa} Let $\lambda = (1,3)$, then we also write it as a state $\Lambda_1 = \times - \times - $ and as a lattice, $L(\Lambda_1) = \C[[t^{-1}]] \oplus \C \cdot t \oplus \C \cdot t^3 $. Then, $\Lambda = (\times - \times -,  - \times - \times,\times - \times -,  - \times - \times )$. As a flag, $t_\lambda = (\C[[t^{-1}]] \oplus \C \cdot t \oplus \C \cdot t^3,  \C[[t^{-1}]] \oplus \C \cdot t^2 \oplus \C \cdot t^4, \ldots)$. \end{Exa}
We define the following subset for $\mu$ as a list of states:
\begin{displaymath} \ag{}^{\mu} =\{L \mid \textrm{dim} (L_i/(L_i \cap t^m \C[[t^{-1}]])) \geq \textrm{dim} (\mu_i/(\mu_i \cap t^m \C[[t^{-1}]])) \}.\end{displaymath}
In juggling terms, the right hand side of the inequality is equal to the number of balls landing weakly after $m$ in $\mu_i$. These conditions define a finite-dimensional closed subset. We also will use the open version
\begin{displaymath} \ag{\circ}^{\mu} =\{L \mid \textrm{dim} (L_i/(L_i \cap t^m \C[[t^{-1}]])) = \textrm{dim} (\mu_i/(\mu_i \cap t^m \C[[t^{-1}]])) \}\end{displaymath}

 We also have the subset
\begin{displaymath} \ag{\lambda} = \{L \mid \textrm{dim} (L_i \cap t^m \C[t]) \geq \textrm{dim} (\lambda_i \cap t^m \C[t]) \}\end{displaymath}
 where the right hand side is equal to the number of balls landing weakly after $m$ in $\lambda_i$. These conditions define a finite-{\em codimensional} closed subset. These spaces are opposite Schubert and  Schubert by $\S 8.4$ in \cite{PS}. We also have the open
 \begin{displaymath} \ag{\lambda}^{\circ} = \{L \mid \textrm{dim} (L_i \cap t^m \C[t]) = \textrm{dim} (\lambda_i \cap t^m \C[t]) \}\end{displaymath}
  We let $\ag{\lambda}^\mu = \ag{\lambda} \cap \ag{}^\mu$. We can extend this to affine flags via the embedding of $\af{k,n}$ inside the product of copies of the $\ag{k,n}$.

\chapter{Main Results: Corresponding the Big Cells to Affine Flags}\label{ch:new}

\section{New Term Order and Affine Permutation Construction}\label{sec:newtermorder}
We are interested in Schubert patches on positroid varieties, considered as sets of matrices. For a Schubert patch on the Grassmannian, we consider $k \times n$ matrices with an associated $\lambda \in \binom{[n]}{k}$. We will construct corresponding $k \times (n-k)$ matrices with a record of $\lambda$, and put a term order on the variables. Define the {\bf distinguished path} to represent $\lambda$ as follows. Let $M$ be the $k \times n$ matrix where we set the $\lambda_i^{th}$ column equal to the $i^{th}$ column of the identity matrix $I_k$, and leave all other entries free. Draw a line starting from the northwest corner along the grid lines, where the line moves east until it passes above a 1, at which point it moves one unit south.

\begin{figure}[htbp]
\begin{center}
\scalebox{0.7}{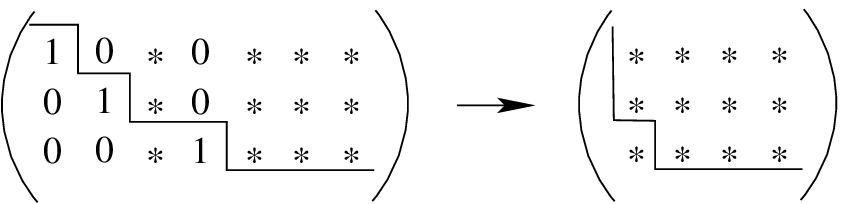}
\caption{Collapsing the matrix with the distinguished path.}
\label{fig:collapse}
\end{center}
\end{figure}

As in Figure \ref{fig:collapse}, we collapse the $k \times n$ matrix to a $k \times (n-k)$ matrix by removing the identity columns, so that the distinguished path records where those columns were. Starting with the entry in the collapsed matrix corresponding to the $(k,1)$ entry in the original matrix, we label {\bf split antidiagonals} in the order $k,1,2,\ldots,k-1$ such that they \begin{enumerate}
\item skip over the columns of $I_k$,
\item do not cross the distinguished path, and
\item start above the distinguished path, then cycle around to the terms below.
\end{enumerate}

\begin{figure}[htbp]
\begin{center}
\scalebox{0.8}{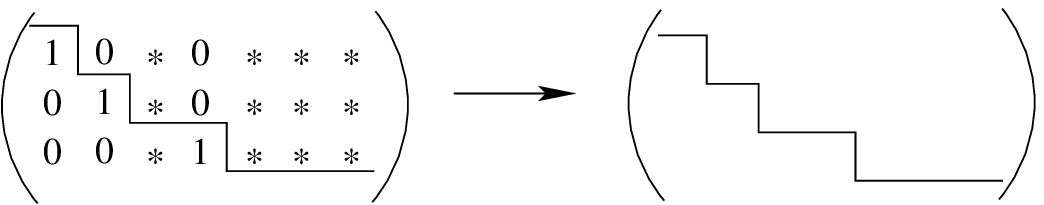}
\caption{New term order on the original $k\times n$ matrix for $\lambda = (1,2,4)$ for $k=3$ and $n=7$.}
\label{fig:termorder}
\end{center}
\end{figure}

In Figure \ref{fig:termorder}, we underline the antidiagonal labeling so as not to confuse the 1's in the identity columns with the first antidiagonal. We choose the reverse lexicographic term order on these matrix entries. Note that even though our reverse lexicographic order is only a partial order, the following lemma gives us the fact that our reverse lexicographic first terms will be monomials.

 \begin{Lem} With this new order, $$\init \left(\prod_{i=1}^n \; \det M_{[i,i+k-1]}\right) = \prod_{i,j} \; a_{ij}$$ where $[M]_{j,k}$ denotes the submatrix of $M$ given by columns $j$ to $k$, cyclically. \end{Lem}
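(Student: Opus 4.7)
The plan is to show that each cyclic determinant $\det M_{[i,i+k-1]}$ has, under the new reverse lex order, an initial term equal to the product of entries along the $i$-th split antidiagonal; once this is established, the conclusion follows because the initial term of a product is the product of initial terms, and the split antidiagonals partition the entries of the collapsed matrix.

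First, I would reduce each cyclic determinant to a minor of the collapsed matrix. Set $p_i = |\lambda \cap [i,i+k-1]|$. The identity columns in window $[i,i+k-1]$ each have a single $1$ in a distinct row and zeros elsewhere, so cofactor expansion along these columns gives $\det M_{[i,i+k-1]} = \pm \det A_i$, where $A_i$ is the $(k-p_i)\times(k-p_i)$ submatrix of the collapsed matrix whose rows are $\{1,\ldots,k\}$ minus the rows occupied by those identity $1$'s, and whose columns are the free columns of $[i,i+k-1]$. This reduces the problem from the $k\times n$ matrix to $n$ individual minors of the collapsed matrix.

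Next, I would identify the initial term of $\det A_i$ in the induced reverse lex order. The labeling construction is engineered so that the antidiagonal of $A_i$ (reading NE to SW through the submatrix) is exactly the set of entries assigned the label $i$—that is, the $i$-th split antidiagonal. The requirements that split antidiagonals skip identity columns and do not cross the distinguished path match exactly the row/column restrictions defining $A_i$, while the rule that antidiagonals start above the distinguished path and cycle around to the entries below accounts for cyclic windows $[i,i+k-1]$ that wrap past column $n$. Because each such minor $A_i$ is a generic matrix of distinct variables, the reverse lex initial term of $\det A_i$ is the product along the antidiagonal, which by construction equals the product of entries labeled $i$.

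The combinatorial heart is verifying that the split antidiagonals partition the $k(n-k)$ entries into $n$ pieces of sizes $k-p_i$. A degree count confirms the total: $\sum_{i=1}^n (k-p_i) = nk - k|\lambda| = k(n-k)$, as each column of $\lambda$ appears in exactly $k$ cyclic windows. Combined with the partition rules (skipping identity columns, respecting the distinguished path, cycling above to below), this forces each entry to lie in exactly one split antidiagonal. I expect this check—that the prescription starting at $(k,1)$ and cycling through labels $k,1,2,\ldots,k-1$ produces exactly the antidiagonals of the $n$ cyclic minors, with neither overlap nor omission—to be the main obstacle, since it requires tracking how the cyclic window interacts with the fixed identity columns as $i$ varies around the cycle.

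Finally, because reverse lex is a multiplicative monomial order,
\[
\init \bigg(\prod_{i=1}^n \det M_{[i,i+k-1]}\bigg) \;=\; \prod_{i=1}^n \init(\det A_i) \;=\; \prod_{i,j} a_{ij},
\]
as each variable $a_{ij}$ appears in exactly one split antidiagonal and therefore exactly once on the right.
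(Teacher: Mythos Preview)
Your proposal is correct and follows essentially the same approach as the paper: identify the initial term of each cyclic determinant as the product along a single split antidiagonal, then observe that these antidiagonals partition all the free entries so the product of initial terms is $\prod_{i,j} a_{ij}$. Your version is in fact more explicit than the paper's---you spell out the cofactor expansion along the identity columns and the degree count $\sum_i (k-p_i)=k(n-k)$, and you correctly flag the labeling/partition verification as the point requiring care---whereas the paper simply asserts that the determinant of columns $[i,i+k-1]$ picks out the antidiagonal labeled $(i-1)\bmod n$ and that each entry lies in exactly one such determinant.
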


\begin{proof} Applying the reverse lexicographic order to a given square submatrix is equivalent to crossing out the lowest numbered boxes, then the second lowest, and so on, until only one weight remains. The determinant of columns $[i,i+k-1]$ under this order picks out the antidiagonal elements labeled with $(i-1) \mod n$. Then each entry appears in only one determinant, and so the first monomial in the product under this order will equal $\prod_{i,j} \; a_{ij}$.
\end{proof}

 Draw copies of the collapsed matrix in a diagonal, to create a northwest-southeast {\bf strip} between the paths.
 We want to associate to this strip an affine permutation, $\pi_\lambda$. We label each unit step on our line with an integer,
$\ldots,-2, -1,0,1,2,\ldots$, one for each unit step east and south. Using the labeling for our term order, associate entry $k$ with the transposition $s_k$. We will refer to a {\bf block} in the strip as a unit that contains each original box just once.

\begin{figure}[htbp]
\begin{center}
\scalebox{0.8}{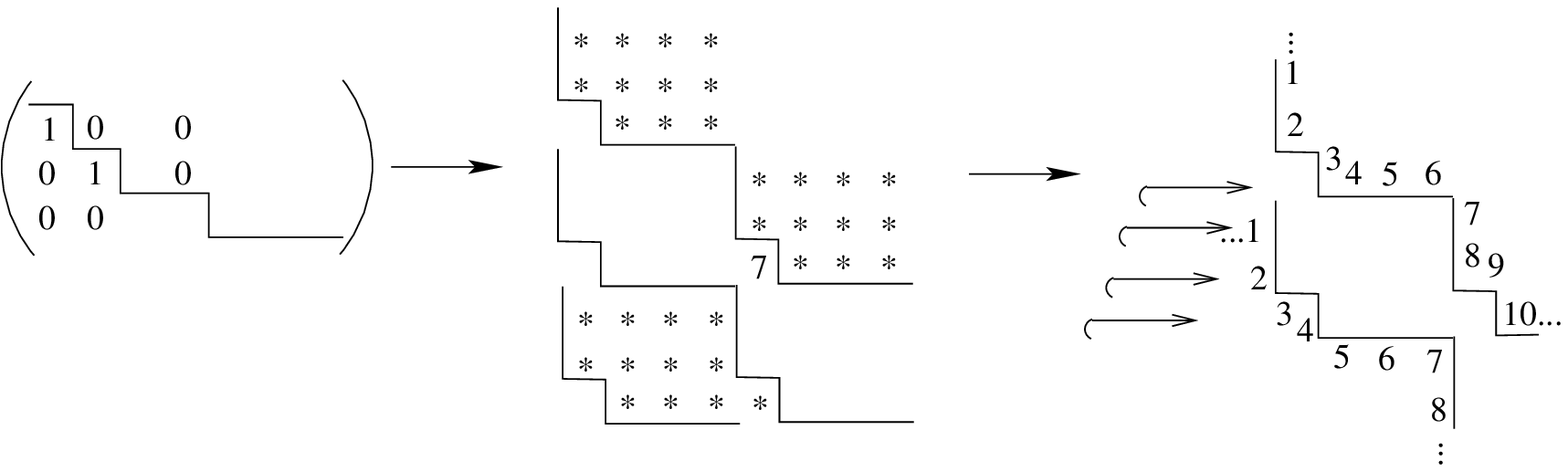}
\caption{Creating the strip for $\lambda = (1,2,4)$ for $k=3$ and $n=7$, and $Q_\lambda=456\;2345\;1234\;7$.}
\label{fig:strip}
\end{center}
\end{figure}

Then we define the word $Q_\lambda$ by reading west to east, south to north, inside single a block. See Figure \ref{fig:strip}. The permutation $\pi_\lambda$ is determined by $Q_\lambda$ operating on the affine permutation $[k+1,k+2,\ldots, k+n]$. That is, $\pi_\lambda =Q_\lambda [k+1,k+2,\ldots, k+n]$.

\begin{Lem} The permutation $\pi_\lambda$ resulting from this construction is 321-avoiding.\label{lem:321}\end{Lem}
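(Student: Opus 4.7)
The plan is to apply the criterion from Section 2.1 that an element of (affine) $S_n$ is 321-avoiding if and only if it is fully commutative, i.e., no reduced expression contains a braid substring $s_i s_{i\pm 1} s_i$. Equivalently, for each pair $\{s_i, s_{i+1}\}$ of simple reflections that are adjacent in the affine Coxeter graph (including the wrap-around pair involving $s_n = s_0$), the restriction of any reduced word for $\pi_\lambda$ to these two letters must alternate. So the task reduces to reading off $Q_\lambda$ explicitly and checking this alternating property for each such pair.

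First I will identify $Q_\lambda$ in terms of $\lambda$. The distinguished path on the collapsed matrix has column position $c_r = \lambda_r - r$ at row $r$, and the east-of-path antidiagonal labeling satisfies $\mathrm{label}(r,c) = r + c - 1$. Hence reading the above-path cells west-to-east, bottom-to-top, produces the concatenation
\[ Q_\lambda^{\text{above}} = [s_{\lambda_k} s_{\lambda_k+1}\cdots s_{n-1}] \cdot [s_{\lambda_{k-1}}\cdots s_{n-2}] \cdot \cdots \cdot [s_{\lambda_1}\cdots s_{n-k}], \]
followed by a tail of below-path letters whose labels continue the cyclic antidiagonal labeling past $n$, wrapping to small indices modulo $n$. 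Within a single row $r$, the consecutive-label word $s_{\lambda_r}\cdots s_{r+n-k-1}$ restricts to a fixed pair $\{s_i,s_{i+1}\}$ in one of four ways: both letters present contributes $s_i s_{i+1}$; only the right endpoint $s_i$ occurs when $i = r+n-k-1$ (at most one $r$); only the left endpoint $s_{i+1}$ occurs when $\lambda_r = i+1$ (at most one $r$, since the $\lambda_r$ are distinct); otherwise nothing. Since $\lambda_r$ is strictly increasing in $r$ and $b_r = r + n - k - 1$ increases by one with $r$, the rows contributing $s_i s_{i+1}$ form a contiguous block, the lone $s_i$-row (if present) sits just after this block in the bottom-to-top reading, and the lone $s_{i+1}$-row sits just before, so threading the contributions together yields an alternating string.

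The main obstacle is the below-path tail together with the affine-adjacent pairs $\{s_{n-1},s_n\}$ and $\{s_n,s_1\}$: here I need to be careful that the wrap-around labels do not introduce a repeated letter at the interface with the above-path part. The resolution is that the below-path labels are precisely those needed to "close the cycle" around the distinguished path, so their letters slot in at the end of the restricted subwords in just the positions the alternation requires; a parallel case analysis on the cyclic pairs, using the fact that the below-path staircase has left endpoint determined by $\lambda_r$ in the same way as above, completes the verification. Combining all cases, $Q_\lambda$ is fully commutative, so $\pi_\lambda$ is 321-avoiding.
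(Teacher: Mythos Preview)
Your alternation criterion is a legitimate route, but you have not actually carried it out. You yourself flag the below-path tail and the affine pairs $\{s_{n-1},s_n\}$, $\{s_n,s_1\}$ as ``the main obstacle,'' and then resolve it with the sentence ``a parallel case analysis \ldots\ completes the verification.'' That is not a proof; it is precisely where the content lies. The below-path labels are produced by the \emph{split} antidiagonals wrapping around the distinguished path, so their indices are not simply a continuation of the consecutive runs you wrote for the above-path part, and the interface between the two pieces is exactly where a repeated letter could in principle appear. Until you write down what those labels are and check the alternation at the seam and for the cyclic pairs, the argument is incomplete.

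By contrast, the paper's proof avoids all of this bookkeeping with a purely local observation. Because the labels are constant along (split) antidiagonals, any two occurrences of the same letter $s_i$ in one block sit at adjacent antidiagonal positions, hence inside a $2\times 2$ square
\[
\begin{array}{cc} s_{i-1} & s_i \\ s_i & s_{i+1} \end{array}
\]
In the reading order (west to east in each row, bottom row to top row) this square contributes $\ldots s_i\, s_{i+1}\ldots s_{i-1}\, s_i\ldots$, so between the two $s_i$'s there is an $s_{i+1}$ immediately after the first and an $s_{i-1}$ immediately before the second. Neither can be commuted past an $s_i$, so no commutation rearrangement of $Q_\lambda$ can produce a consecutive $s_i s_{i\pm 1} s_i$. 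This argument is indifferent to whether the boxes are above or below the path and handles all indices (including the cyclic ones) uniformly, which is why it is a few lines long. If you want to salvage your approach, you should either finish the case analysis honestly or, better, notice that your alternation condition for $\{s_i,s_{i+1}\}$ follows immediately from this same $2\times 2$ picture.
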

\begin{proof} Any two occurrences of $s_i$ must be on the same antidiagonal, thus occur in a block of the form $\left(
           \begin{array}{cc}
             s_{i-1} & s_i \\
              s_i &  s_{i+1} \\
           \end{array}
         \right)$
\noindent yielding a substring in $Q_\lambda$ of the form $s_i,  s_{i+1},\ldots,s_{i-1},s_i$. Since any pair of adjacent transpositions is not commutative, it is not possible to get a substring of the form $s_i s_{i \pm 1} s_i$. The statement follows from \S \ref{sec:perms}.
\end{proof}

\begin{Prop} The permutation $\pi_\lambda$ corresponds to the juggling pattern \begin{displaymath}
f(i) = i+\left\{ \begin{array}{ll}
n & \textrm{if } i \in \lambda,\\
0 & \textrm{otherwise.}\\
\end{array} \right.
\end{displaymath}
\end{Prop}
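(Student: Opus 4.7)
The plan is to interpret the strip as a wiring diagram for $Q_\lambda$ and trace each of the $n$ wires through it, matching their endpoints against the juggling pattern $f$. By Lemma~\ref{lem:321} the permutation $\pi_\lambda$ is $321$-avoiding, so (via the heap description of Section~\ref{sec:perms}, together with Theorem~\ref{thm:321}) all reduced expressions for $\pi_\lambda$ differ only by commutations of non-adjacent generators. This means the action of $Q_\lambda$ depends only on the partial order encoded by the strip, not on the particular south-to-north, west-to-east linear order in which its letters are read, and I may freely process the crossings in whatever order is most convenient.

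I would first analyze the ``empty'' strip (i.e., with every labeled box replaced by a non-crossing): by the $k$-wide shape of the strip, this wiring sends a wire entering at position~$i$ to position~$i+k$ on the opposite edge, realizing $\sigma_0(i)=i+k$. Introducing the crossings of $Q_\lambda$ then modifies this wiring into $\pi_\lambda$. The claim to verify is that, after all crossings, wire~$i$ ends at $i+n$ when $i\in\lambda$ and at $i$ when $i\notin\lambda$.

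The key combinatorial observation is that the distinguished path partitions the collapsed matrix into an ``above'' region (all antidiagonals labeled with generators $s_1,\ldots,s_{n-1}$) and a single ``below'' entry labeled with the affine generator $s_n = s_0$. For each $i\in\lambda$ the distinguished path makes a south-turn at the column associated to $i$; the resulting sequence of $n-k$ crossings (one per free column) lengthens wire~$i$'s throw from $k$ to $n$. For each $i\notin\lambda$ the corresponding column of $k$ free crossings shortens wire~$i$'s throw from $k$ to $0$. Counting crossings: the strip has exactly $k(n-k)$ boxes, which matches $k$ wires receiving $n-k$ shifts plus $n-k$ wires receiving $k$ reverse-shifts, consistent with $\sum_i(f(i)-\sigma_0(i))=0$.

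The main obstacle is the affine bookkeeping at the below-path entry: this lone $s_n$ is what produces the true ``length $n$'' throws (as opposed to the ground state's length $k$ throws), and one must check that the other letters of $Q_\lambda$ arrive at the right wires in the right order for $s_n$ to effect the wraparound correctly. Once this single delicate check is done, the remaining wire-tracing is routine, and a direct position-by-position comparison shows that $\pi_\lambda(i)=f(i)$ for all $i\in[n]$, giving the stated juggling pattern.
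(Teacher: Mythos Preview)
Your overall approach coincides with the paper's: both invoke Lemma~\ref{lem:321} to pass to the heap, build the wiring diagram from a single block of the strip, and trace wires. Your ``empty strip'' baseline $\sigma_0(i)=i+k$ is exactly the paper's starting affine permutation $[k+1,\ldots,k+n]$, and your dual row/column accounting of the $k(n-k)$ crossings matches the paper's description of the $k$ chains of length $n-k$.

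The gap is precisely in the step you yourself flag as the ``main obstacle.'' The claim that the distinguished path leaves \emph{a single} below-path entry is false in general: the number of below-path boxes in the collapsed matrix depends on $\lambda$ (zero for the ground state $\lambda=(1,\ldots,k)$, one for the running example $\lambda=(1,2,4)$, more for other choices such as $\lambda=(1,3,4)$), and when there are several they lie on distinct split antidiagonals and carry different labels, not all $s_n$. So the assertion that ``this lone $s_n$ is what produces the true length-$n$ throws'' misidentifies the mechanism --- in the ground-state case $Q_\lambda$ is a word in $s_1,\ldots,s_{n-1}$ only, yet $k$ throws of length $n$ still appear. The paper sidesteps any special affine bookkeeping by \emph{cyclically shifting} the wiring diagram so that each $\lambda_i$ contributes one contiguous chain of $n-k$ crosses $s_{\lambda_i},\ldots,s_{\lambda_i+n-k-1}$; after that reorganization there is no separate wraparound case, and the wire-tracing you outline becomes genuinely routine.
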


\begin{proof}
Lemma \ref{lem:321} implies that all the reduced words corresponding to $\lambda$ are fully commutative, thus we can make a heap by simply rotating a single block of the strip by $45^\circ$ counterclockwise. We create the associated wiring diagram by replacing each transposition with a cross. We cyclically shift the wiring diagram to get chains of crosses from $(\lambda_i+(n-k-1),\ldots,\lambda_i)$ for each $\lambda_i \in \lambda$, reading top to bottom in the wiring diagram. Then elements of the form $\lambda_i+k$ are moved $n-k$ spots to the right, while the rest of the elements are moved $k$ spots to the left. Then
\begin{displaymath}
\pi_\lambda(i) = \left\{ \begin{array}{ll}
i+n & \textrm{if } i \in \lambda,\\
i & \textrm{otherwise.}\\
\end{array} \right.
\end{displaymath}
This corresponds to the desired juggling pattern, with $n$'s in the entries of $\lambda$ and 0's elsewhere. \end{proof}

\begin{Exa} Let $\lambda=(1,2,4)$. Then Figure \ref{fig:lambdajp} shows the construction of a heap and wiring diagram from the collapsed matrix.
\begin{figure}
\begin{center}
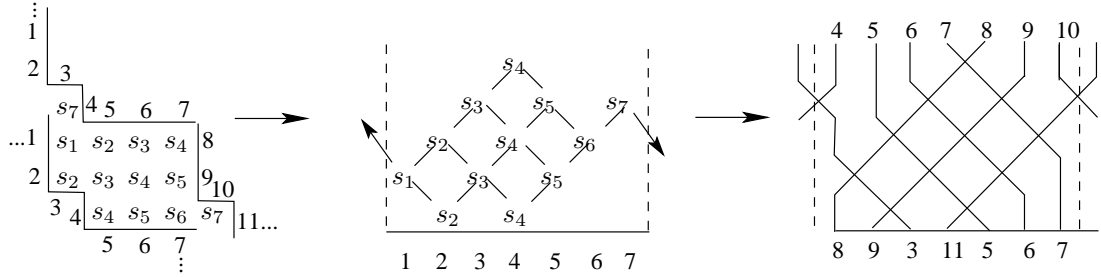
\caption{Corresponding $Q_{(1,2,4)}$ to the juggling pattern $(7,7,0,7,0,0,0)$.}
\label{fig:lambdajp}
\end{center}
\end{figure}
\end{Exa}

\section{Isomorphism of an Affine Kazhdan-Lusztig Variety and a Patch on a Positroid Variety}
We return to the big cell in $Gr_k \C^n$ centered at $\lambda$. This has a stratification given by the intersections with positroid varieties $\Pi_f$, and a term order on its (polynomial) coordinate ring defined in \S \ref{sec:newtermorder}. The affine opposite Schubert cell $\af{\circ}^{\pi_\lambda}$ is stratified by its intersections with $(\af{k,n})_{f}$, with $T$-fixed points $f$ given by virtual juggling patterns of length $n$ with $k$ balls.

\begin{Thm}[Main Theorem]\label{thm:mainthm}
Let $f$ be a bounded juggling pattern of length $n$ with $k$ balls, and let $\lambda \in \binom{[n]}{k}$. Then, $$\overline{\Pi_f}|_\lambda \cong (\af{k,n})_{f,\circ}^{t_\lambda}$$
Moreover, there is a correspondence between the stratifications $(Gr_k \C^n)|_\lambda$ and $ (\af{k,n})_{\circ}^{\pi_\lambda}$.
\end{Thm}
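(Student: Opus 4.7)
The plan is to construct an explicit isomorphism using the word $Q_\lambda$ from Section \ref{sec:newtermorder} as a shared parametrization of both sides. On the positroid side, a point of $\Pi_f|_\lambda$ is a $k \times n$ matrix whose $\lambda$-columns are the columns of $I_k$ and whose remaining $k(n-k)$ entries are free (subject to the cyclic rank conditions coming from $f$). On the affine side, fixing the $\lambda$-columns as an identity block corresponds to pinning the $T$-fixed point $t_\lambda$, and so the patch embeds into the opposite Schubert cell $\af{\circ}^{t_\lambda}$. Since $\pi_\lambda$ is $321$-avoiding by Lemma \ref{lem:321}, its reduced word $Q_\lambda$ is fully commutative, and the Bott-Samelson map $\beta_{Q_\lambda}: \mathbb{A}^{|Q_\lambda|} \xrightarrow{\sim} \af{\circ}^{t_\lambda}$ from Section \ref{sec:oldtermorder} (adapted to the affine setting) is linear in each $c_i$. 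Since $|Q_\lambda| = k(n-k)$, this matches the number of free matrix entries, so I would identify the two parameter spaces by matching each free matrix entry $a_{ij}$ with the Bott-Samelson parameter $c_s$ indexed by the same position in the strip.

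Next I would verify that the identification is compatible with the defining equations. Reading the wiring diagram for $Q_\lambda$ through a single block of the strip and computing path weights (southwest-to-northeast yielding $1$, northwest-to-southeast yielding $-1$, and straight-through northwest-to-northeast yielding $c_s$), one can read off the matrix entries of the lattice representatives as polynomials in the $c_s$. Because $Q_\lambda$ is fully commutative, each matrix entry is (up to sign) a single variable, so the Bott-Samelson coordinates and the matrix coordinates differ only by a monomial change. The cyclic rank condition $\rank[i,j] \leq |[i,j]| - \#\{\text{arcs trapped in } [i,j]\}$ from $\Pi_f$ then matches, under the state-sequence identification of Section \ref{sec:affineflags}, with the closed Schubert condition $\dim(L_i/(L_i \cap t^m\C[[t^{-1}]])) \geq \dim(\mu_i/(\mu_i \cap t^m\C[[t^{-1}]]))$ where $\mu$ is the state sequence of $f$, because both count the number of balls landing weakly after a given time. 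This gives the isomorphism $\overline{\Pi_f}|_\lambda \cong (\af{k,n})_{f,\circ}^{t_\lambda}$.

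For the correspondence of stratifications, I would argue that the open positroid strata $\Pi_{f'}^\circ|_\lambda$ (for $f'$ a bounded juggling pattern refining $\lambda$) correspond bijectively to the intersections $(\af{k,n})_{f',\circ}^{t_\lambda}$: given any matrix representative, the cyclic dependence structure used in Section \ref{sec:posvars} to associate a juggling pattern $f'$ to a matrix produces exactly the state sequence of the associated lattice flag. Since both stratifications are generated by the intersect/decompose/repeat construction of Section \ref{sec:strats} starting from the same divisor $\bigcup_\alpha X_{r_\alpha}$ (Schubert divisors on the one side, cyclic rank-drop divisors on the other), Theorem \ref{thm:kl} together with the isomorphism above forces the strata to match. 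The main obstacle is the careful comparison of the Bott-Samelson matrix entries with the natural matrix coordinates across the strip boundary, since a block of the strip must be periodically identified; controlling this requires tracking the distinguished path and showing that the cyclic wrap-around in the word $Q_\lambda$ realizes the translation element built into the affine (rather than finite) Bott-Samelson construction.
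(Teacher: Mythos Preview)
Your approach is genuinely different from the paper's, and while the strategy is reasonable, it has a gap.

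The paper does not go through the Bott-Samelson parametrization $\beta_{Q_\lambda}$ to prove the isomorphism. Instead, it builds the affine flag directly from the matrix $M$: for each $i$, cyclically rotate the columns of $M$ so that column $i$ sits in position $1$, row-reduce so that each row has zeros to the right of its pivot $1$, and then read each row as a polynomial in $t$ (entry in column $j$ becomes the coefficient of $t^j$); direct-summing these with $\C[[t^{-1}]]$ gives the lattice $L_i$. The flag axioms $L_i = L_{i+n}$ and $t^{-1}L_i \subseteq L_{i+1}$ are then checked by hand from the rotation/row-reduction recipe. For the rank conditions, the paper observes that the Schubert condition on $L_i$ unpacks to $\rank([M_i]_{[1,k]}) \leq k-1$, and since $M_i$ differs from the rotated $M$ only by row operations, this is the same as the positroid cyclic rank condition on columns $[i,i+k-1]$. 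Only the basic (single-rank-condition) juggling patterns need to be checked, since the rest of the stratification is generated by intersect/decompose/repeat.

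Your route through $\beta_{Q_\lambda}$ is the one suggested by Figure~\ref{fig:summary}, but the paper uses that parametrization only to control the \emph{initial ideal}, not to produce the isomorphism itself. The specific claim that ``each matrix entry is (up to sign) a single variable'' because $Q_\lambda$ is fully commutative is not justified: full commutativity guarantees that all path weights in the wiring diagram are homogeneous of the same degree, but it does not by itself force a unique path between any two endpoints, so matrix entries can still be genuine sums. (Compare the paper's own $Q=12312$ example in \S\ref{sec:oldtermorder}, where entries like $c_1c_3 - c_2$ appear; while that $Q$ is not $321$-avoiding, you would need a separate argument specific to the shape of $Q_\lambda$ to rule out such sums.) Without that, the ``monomial change of coordinates'' you want between matrix entries and Bott-Samelson parameters is not established, and the wrap-around issue you flag at the end remains open. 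The paper's direct lattice construction simply sidesteps both difficulties.
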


We prove this result in the next section, where it will suffice to show it holds for those $f$ defined by only one determinant condition (by \S \ref{sec:jp}). These are the elements of codimension 1 and generate the desired stratification, as all elements of higher codimension can be found by the intersect/decompose algorithm described in \S \ref{sec:strats}. We summarize our results in Figure \ref{fig:summary}, where the results on the right side can be found in \cite{Kfs} and the left and horizontal correspondences in this thesis.
\begin{figure}
\centerline{
\xymatrix{
\overline{\Pi_f}|_\lambda \ar@{^{(}->}[d] \ar[r]^{\cong} &\af{f,\circ}^{t_\lambda} \ar@{^{(}->}[d]\\
\overline{\Pi_{kk\dots k} }|_\lambda \ar[r] & \af{\circ}^{t_\lambda}\\
\mathbb{A}^{k(n-k)} \ar[r]^{=} \ar[u]^{\cong} & \mathbb{A}^{k(n-k)}\ar[u]^{\cong}_{\beta_{Q_\lambda}} \\
\text{my revlex order} \ar[r] & \text{lex order} \\
\prod_{i=1}^n \; \det M_{[i,i+k-1]} \ar[r] & p=p_{Q_\lambda}^{-1} \left(\bigcup_{i=1}^n X_{s_i}\right)
}
}
\caption{Summary of isomorphism proven in main theorem.}
\label{fig:summary}
\end{figure}

\subsection{Proof of Main Theorem, By Construction}

Given $\lambda =(\lambda_1,\ldots, \lambda_k) \in \binom{[n]}{k}$, we give an explicit construction of a family of affine flags. Recall we define $M$ as the $k \times n$ matrix with column $\lambda_i$ set equal to the $i^{th}$ column of the identity matrix, and the rest of the entries $a_{ij}$ free (for a total of $k \times (n-k)$ variables). We will associate the $i^{th}$ column of $M$ with $t^i$. We construct the $i^{th}$ lattice $L_i$ in the flag as follows:

\begin{enumerate}
 \item {\em Rotate:} Rotate the columns of $M$ $(i-1)$ times to the left, so that the previous column $i$ is now in the place of column 1.
\item {\em Clear out rows:} By construction, row $j$ has $a_{jl}=1$ for some $l$. Set all entries $a_{jm}=0$ for $m>l$. Call this matrix $M_i$.
\item {\em Create summands:} For each row $j$ in $M_i$, make polynomial $p_j(t)$ using the entries $a_{jl}$ as the coefficients for $t^j$.
\item {\em Construct $L_i$:} Direct sum the $p_j(t)$ components together with $\C[[t^{-1}]]$ to make the lattice $L_i$.
\end{enumerate}

\begin{Exa}\label{ex:mainthmex}
Let $\lambda = (1,2,4)$ and $n=7$.

Then, $M=\left(
                                              \begin{array}{ccccccc}
                                                1 & 0 & a_{13} & 0 & a_{15} & a_{16} & a_{17} \\
                                                0 & 1 & a_{23} & 0 & a_{25} & a_{26} & a_{27} \\
                                                0 & 0 & a_{33} & 1 & a_{35} & a_{36} & a_{37} \\
                                              \end{array}
                                            \right)$
Then,
$$M_1=\left(
                                              \begin{array}{ccccccc}
                                                1 & 0 & 0 & 0 & 0 & 0 & 0 \\
                                                0 & 1 & 0 & 0 & 0 & 0 & 0 \\
                                                0 & 0 & a_{3,3} & 1 & 0 & 0 & 0 \\
                                              \end{array}
                                            \right)$$
gives the polynomials $p_1(t)=t$, $p_2(t)=t^2$, and $p_3(t)=a_{33}t^3 + t^4$, and  $$L_1= \C[[t^{-1}]] \oplus \C \cdot (t) \oplus \C \cdot (t^2) \oplus \C \cdot (a_{33}t^3 + t^4)$$ Similarly,
                                            $M_2=\left(
                                         \begin{array}{ccccccc}
                                                0 & a_{13} & 0 & a_{15} & a_{16} & a_{17} & 1 \\
                                                1 & 0 & 0 & 0 & 0 & 0 & 0 \\
                                                0 & a_{33} & 1 & 0 & 0 & 0 & 0\\
                                              \end{array}
                                            \right)  
                                            \longleftrightarrow \\ \\
                                           L_2 = \C[[t^{-1}]] \oplus \C\cdot (t) \oplus \C \cdot(a_{33}t^2 + t^3 )\oplus \C \cdot(a_{13}t^2 + a_{15}^4 + a_{16}t^5+ a_{17}t^6 + t^7)
                                           $
                                             We also get $$L_3 = \C[[t^{-1}]] \oplus \C \cdot(a_{33}t + t^2 )\oplus \C \cdot(a_{13}t + a_{15}t^3 +\ldots+ t^6) \oplus \C \cdot(a_{33}t + \ldots+ t^7)$$
  $$ L_4 = \C[[t^{-1}]] \oplus \C \cdot(a_{33} + t)\oplus \C \cdot(a_{13} + a_{15}t^2 +\ldots+ t^5) \oplus \C \cdot(a_{33} + \ldots+ t^6) \oplus \ldots$$

and so on for $i=5$ to $7$.
\end{Exa}
 Given a juggling pattern $f$, we show the construction of rank conditions which must be satisfied by these matrices $M_i$. Recall that we need only consider juggling patterns with only one determinant condition, as explained after Theorem \ref{thm:mainthm}. We apply the inequalities from \S \ref{sec:affineflags}. Here, the only nontrivial requirement comes from the location of the last ball (at $k+1$): $$\textrm{dim}(L_i \cap t^{k+1} \C [t]) \geq 1.$$ This condition is equivalent to the statement that
$$\textrm{dim}(L_i/{\C[[t^{-1}]]} + t^{k+1}\C[t] / {t^n \C[t]}) \leq (\textrm{dim} (L_i) +\textrm{dim}(t^{k+1}\C[t]) -1)$$
which implies \begin{eqnarray*} n-1 &\geq& {\rank\left(
                                                                                                       \begin{array}{c|c}
                                                                                                         [M_i]_{1,k} & [M_i]_{k+1,n} \\
                                                                                                         \hline
                                                                                                         O & I_{n-k} \\
                                                                                                       \end{array}                                                                                                     \right)} \end{eqnarray*}

                                                                                                       or equivalently,
                                                                                                     $$k-1 \geq  \rank [M_i]_{1,k} \Rightarrow \textrm{det} [M_i]_{1,k} = 0.$$

\begin{cont_ex}{\bf\ref{ex:mainthmex}. }
Let $f=2333334$, corresponding to the single (affine flag) rank condition $\rank[1,2,3] \leq 2$. Given $\lambda = (1,2,4)$, we have
$$M_{[1,3]}=\left(
                                              \begin{array}{ccc}
                                                1 & 0 & a_{13}  \\
                                                0 & 1 & a_{23}  \\
                                                0 & 0 & a_{33}  \\
                                              \end{array}
                                            \right)$$
 On the positroid variety side, the rank condition from $f$ is $\rank M_{[1,3]} \leq 2$. Our affine flag construction gives us the condition $\rank ([M_1]_{[1,3]}) \leq 2$. Since the two matrices only differ by row reduction, we have that $\rank M_{[1,3]}=\rank ([M_1]_{[1,3]})$, so the two conditions are equivalent.
\end{cont_ex}

\begin{proof}[Proof of Theorem \ref{thm:mainthm}] We show that these lattices form a flag. Recall that
$$\af{k,n}=\{ (\ldots, L_0, L_1, \ldots, L_n, L_{n+1}, \ldots) \mid L_i \in \ag{k,n}, L_i=L_{i+n}, t^{-1} L_{i} \subseteq L_{i+1} \}$$

Each $L_i$ is a direct sum of the form $\C[[t^{-1}]] \oplus \C \cdot p_1(t) \oplus \ldots \C \cdot p_k(t)$, where $p_i$ is the polynomial associated with the $i^{th}$ row of $M_i$. This gives us that $d_{L_i} = k$, and $L_i \supseteq \C[[t^{-1}]]$ implies that $d_{\C[[t^{-1}]] } = 0$. Thus the index of $L_i$ is indeed $k$, so $L_i \in \ag{k,n}$. The condition $L_i=L_{i+n}$ is clearly satisfied by the cyclic structure. Now consider how $L_i$ is related to $L_{i+1}$ in our construction. For each component in the lattice $L_{i+1}$, compare rows in the matrices $M_i$ and $M_{i+1}$: the $j^{th}$ row of $M_{i+1}$ is the $j^{th}$ row of $M_i$ cyclically shifted to the left once, thus the entry in column 1 of $M_{i}$ are set to 0 in $M_{i+1}$, and the coefficient of $t^j$ in $L_{i+1}$ is equal to the coefficient $t^{j+1}$ in $L_i$. Thus, term by term, the condition $t^{-1} L_{i} \subseteq L_{i+1}$ is satisfied.

A basic juggling pattern $f$ corresponds to a condition of the form $\rank (M_{j,j+k-1}) \leq k-1$ for some $j \in [1,n]$. From our lattice conditions, we construct the conditions $\rank( [M_i]_{[j,j+k-1]}) \leq k-1$. The ranks of $M$ and $M_i$ are equal since $M_i$ can be obtained from $M$ by row reduction. Thus the conditions for the two sides of the isomorphism are equivalent.

\end{proof}

\section{Implications}

As in the $G=GL_n$ case, we construct combinatorial objects called affine pipe dreams, then show how they are related to subword complexes and antidiagonal complexes for $GL_n(\C[[t^{-1}]][t])$ Kazhdan-Lusztig varieties. They share many of the same properties as in the non-affine pipe dream case previously discussed.

\subsection{Affine Pipe Dreams}
 An {\bf affine pipe dream} is a diagram on the strip defined by $\lambda \in \binom{[n]}{k}$, as in \S \ref{sec:newtermorder}, filled with elbow $\textelbow$ and crossing $\textcross$ tiles. Note that we now allow crossing tiles in the entire diagram. As in the non-affine case, we say an affine pipe dream is {\bf reduced} if no two pipes cross twice. We will not consider non-reduced pipe dreams, and simply use {\it affine pipe dream} to mean {\it reduced affine pipe dream}. An affine pipe dream represents an affine permutation, where we follow the pipes from the bottom line, northeast to the top line. Note that if $\lambda = (1,\ldots, k)$ then we get a $k \times (n-k)$ rectangle, where the pipes are read from the west and south edges to the north and east edges, in that order. The chute and ladder moves described in \S \ref{sec:pipedreams} apply to affine pipe dreams as well.

\begin{Exa} For any $\lambda$, the $k$-ball cascade corresponds to the permutation $\pi(i)=i+k$ and the affine pipe dream with all elbow tiles. The siteswap with $n$'s in $\lambda_i$ spots and 0's elsewhere corresponds to the affine pipe dream with all crossing tiles. \end{Exa}

We construct an affine pipe dream for $\lambda$ and a juggling pattern $f$, by considering the associated cyclic rank conditions, as described in \S \ref{sec:posvars}. For each determinant, we pick out the antidiagonals as defined by our term order in \S \ref{sec:newtermorder}.

\begin{cont_ex}{\bf\ref{ex:mainthmex}. }\label{ex:apd} Let $\lambda=(1,2,4)$ and $\pi=5\;4\;6\;8\;7\;9\;10$, corresponding to $f= 4\;2\;3\;4\;2\;3\;3$. (The juggling pattern shifted back by $k=3$ is $f=1 \; -1 \; 0 \;1 \; -1 \; 0\; 0$.) The first rank condition $\rank[2,4] \leq 2$ gives $a_{33} = 0$. The second rank condition $\rank[5,7] \leq 2$ gives $\det M_{[5,6,7]} = 0$, and our term order picks out $a_{35}=0$, $a_{26}=0$, and $a_{17}=0$. Figure \ref{fig:124ex} shows the matrix form for the patch on the positroid variety. We represent the antidiagonal terms by \textcross tiles in the affine pipe dream. Figure \ref{fig:124pd} shows the corresponding affine pipe dreams, with elbows filled in. We show only one repetition of the block for clarity, but the reader should keep in mind that the affine pipe dream is actually an infinite strip.

\begin{figure}[htbp]
\begin{center}
\scalebox{0.7}{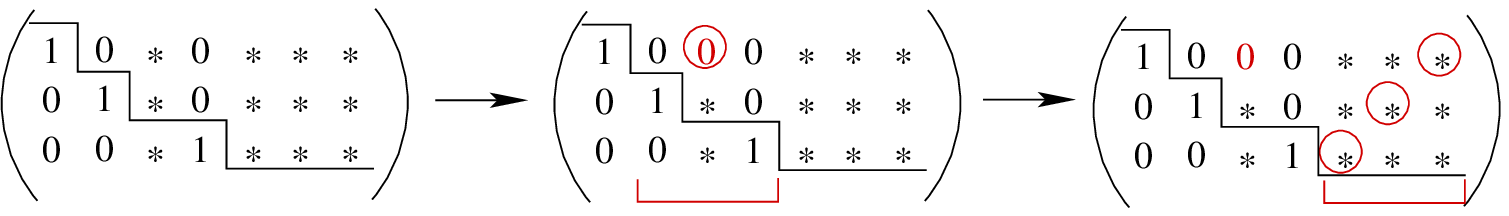}
\caption{Application of rank conditions for $f= 4\;2\;3\;4\;2\;3\;3$, with $\lambda=(1,2,4)$.}
\label{fig:124ex}
\end{center}
\end{figure}

\begin{figure}
\begin{center}
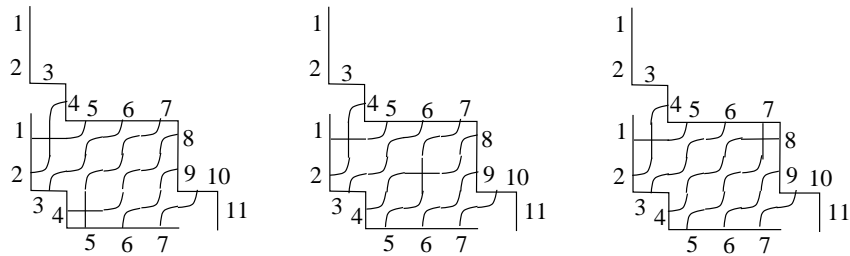
\caption{For $\lambda=(1,2,4)$, all the affine pipe dreams for the affine permutation $\pi=5\;4\;6\;8\;7\;9\;10$.}
\label{fig:124pd}
\end{center}
\end{figure}

\end{cont_ex}

\begin{Exa} For the same $\pi=5\;4\;6\;8\;7\;9\;10$, corresponding to $f= 4\;2\;3\;4\;2\;3\;3$, now let $\lambda=(4,6,7)$. The first rank condition $\rank[2,4] \leq 2$ gives $\det \left(\begin{array}{cc}
                                     a_{22} & a_{23} \\
                                     a_{32} & a_{33} \\
                                   \end{array}
                                 \right)
 = 0$, so we pick out antidiagonal terms $a_{32}=0$ and $a_{23}=0$. The second rank condition $\rank[5,7] \leq 2$ gives $a_{15}=0$. Thus we get two pipe dreams.
\end{Exa}

In Figure \ref{fig:twostrips}, we show two affine pipe dreams for the same permutation, but two different choices of $\lambda$. These show several repetitions within the infinite strip. For convenience, we will usually only draw one block, but the reader should keep in mind this is only a representative.

\begin{figure}[htbp]
\begin{center}
\scalebox{0.6}{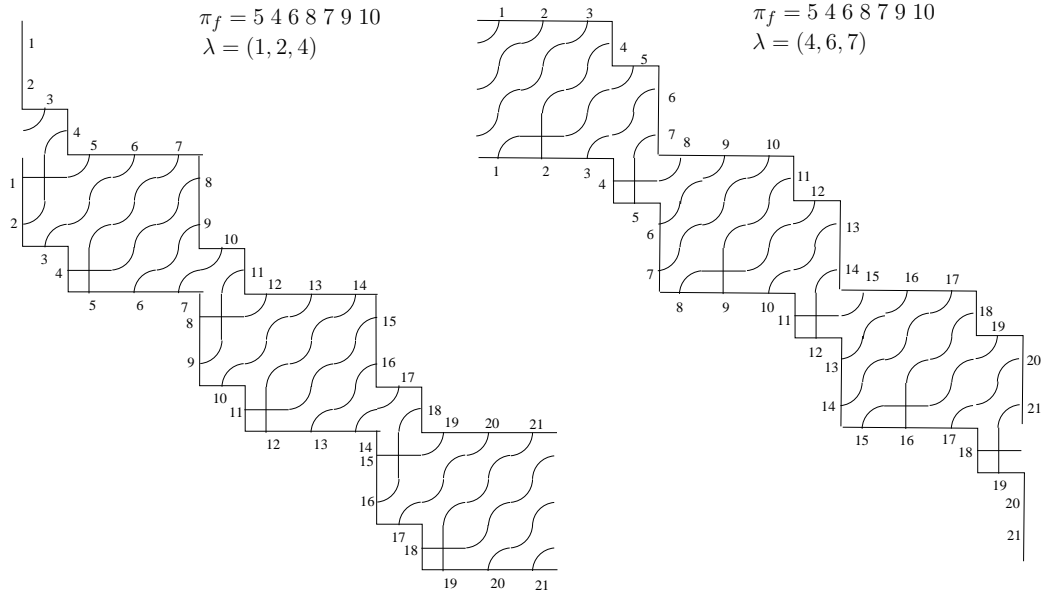}
\caption{For the affine permutation $\pi=5\;4\;6\;8\;7\;9\;10$, a representative affine pipe dream each for $\lambda=(1,2,4)$ and $\lambda=(4,6,7)$.}
\label{fig:twostrips}
\end{center}
\end{figure}

\pagebreak

\begin{Prop} If $P_\lambda$ is a (reduced) affine pipe dream constructed from $\lambda$ and $\pi$, then following the pipes from the lower line to the upper line gives the permutation of $\Z$ that takes $i \mapsto \pi(i)$. \end{Prop}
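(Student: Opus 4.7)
The plan is to reduce the statement to a standard calculation comparing the word $Q_\lambda$ to pipe-following in the strip. First, set up the edge labeling: by the construction in Section~\ref{sec:newtermorder}, each unit step of the upper boundary path and each unit step of the lower boundary path of the strip carries an integer label, obtained by labeling consecutive east/south steps by consecutive integers and extending by periodicity. A pipe in an affine pipe dream $P_\lambda$ enters on the lower boundary at some integer $i$ and exits on the upper boundary at some integer, which we will show is $\pi(i)$.

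The all-crossings diagram serves as the anchor. By construction, each box of a single block of the strip is labeled with a simple reflection $s_j$ (its antidiagonal index), and reading these labels in the order west-to-east, south-to-north within one block produces precisely the word $Q_\lambda$. In the all-crossings diagram, following the pipes is therefore equivalent to applying the transpositions of $Q_\lambda$ to the sequence $[k+1,k+2,\ldots,k+n]$ in the correct order. By the proposition in Section~\ref{sec:newtermorder}, this yields the affine permutation $\pi_\lambda$ corresponding to the juggling pattern with $n$'s in positions $\lambda_i$ and $0$'s elsewhere. This verifies the claim in the all-crossings case.

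For a general reduced affine pipe dream $P_\lambda$, the set of crossing tiles selects a subword $Q'$ of $Q_\lambda$ (in the reading order), and the elbow tiles correspond exactly to the deleted letters. The key local check is the following: as a pipe traverses the strip from southwest to northeast, the order in which it meets boxes is compatible with the reading order of $Q_\lambda$; at a crossing tile labeled $s_j$ two adjacent pipe labels swap (applying the transposition $s_j$), while at an elbow tile labels pass through unchanged. Consequently, the permutation read from the pipes of $P_\lambda$ equals $Q' \cdot [k+1,\ldots,k+n]$, extended $n$-periodically to $\mathbb{Z}$. Since $P_\lambda$ is by definition a reduced affine pipe dream representing $\pi$, the subword $Q'$ is a reduced expression for $\pi$, and therefore following the pipes produces $i \mapsto \pi(i)$.

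The main obstacle is the local check comparing reading order to pipe-traversal order, which must be done carefully along the slanted upper and lower boundaries of the strip: one must verify that a pipe entering at edge label $i$ actually encounters the boxes in the same cyclic order as the reflections appear in $Q_\lambda$, and that the shift by $k$ in the starting sequence $[k+1,\ldots,k+n]$ matches the edge labeling at the lower boundary. This is essentially the same compatibility used to establish Lemma~\ref{lem:321}: because $\pi_\lambda$ is $321$-avoiding, all reduced words for it are related by commutations, so the reading order on the strip is well-defined up to harmless commuting swaps, and the pipe-following procedure is independent of these choices.
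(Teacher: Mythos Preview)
Your argument is correct and rests on the same local computation as the paper's proof: a crossing tile on the antidiagonal labeled $j$ swaps the pipes carrying labels $j$ and $j+1$, while an elbow tile leaves them alone. The paper phrases this as a direct induction on the number of crosses, starting from the all-elbows diagram (whose pipe permutation is the trivial shift $i\mapsto i+k$) and adding one cross at a time; each added cross multiplies by the corresponding simple reflection, so after all crosses are placed the pipe permutation is the product of the chosen reflections, which by hypothesis is a reduced word for~$\pi$.

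Your route differs only in the choice of anchor: you start from the all-crossings diagram and invoke the earlier proposition identifying its permutation as $\pi_\lambda$, then \emph{delete} letters to reach a general subword $Q'$. This works, but it is more machinery than needed --- the all-elbows base case is immediate, whereas the all-crossings base case already requires the result you are trying to prove (or the separate argument for $\pi_\lambda$). Once the local check is in hand, the induction from zero crosses is the cleaner packaging. Your invocation of $321$-avoidance and commutation equivalence at the end is also unnecessary: the local swap computation does not depend on any choice of reading order, only on the antidiagonal label of each box.
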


\begin{proof} The argument is the same as in the proof of Theorem 2.5 in \cite{KM03}: we use induction on the number of crosses. If we add a crossing tile to the antidiagonal labeled $i$, it switches the pipes starting at $i$ and $i+1$. As before, each inversion in $\pi$ contributes at least one crossing tile in $P_\lambda$, so the number of tiles is at least $\ell(\pi)$. If $P_\lambda$ is not reduced, an inversion may contribute more than one tile. Then the number of tiles equals $\ell(\pi)$ if and only if $P_\lambda$ is reduced.
\end{proof}

For a given permutation $\pi$ written as a word and the strip filled with simple transpositions as in \S \ref{sec:newtermorder}: starting with the identity permutation, we can read the word in order, and for each reflection $s_i$, if $s_i$ takes the word higher in Bruhat order but is still below $w$, then keep it. This will give us the lexicographically first reduced word, and putting $\textcross$'s in those boxes will give the top pipe dream. Similarly, the lexicographically last word will give us the bottom pipe dream. We also note that an $n$-throw in a pattern corresponds to a row of $\textcross$'s in the pipe dream, while a 0-throw gives a column of $\textcross$'s.

\subsection{Subword Complex, Stanley-Reisner Ring, and Gr{\"o}bner Basis}

Consider the strip filled with elbow $\textelbow$ tiles, to create pipes that go from the bottom line, northeast to the top line. Then our term order labeling is equivalent to labeling the antidiagonal containing $(i,j)$ with $s_k$ if replacing the elbows in $(i,j)$ with a crossing tile yields the transposition on $k$ and $k+1$. Note this matches our construction of $Q_\lambda$. We can now apply the following theorems:
\begin{Thm}\cite[Theorem 4]{Kfs}
  Let $f \in \Z[x_1,\ldots,x_n]$ be a degree $k$ polynomial
  whose lexicographically first term is a product of $k$ distinct variables.

  Let $Y$ be one of the schemes constructed from the hypersurface
  $f=0$ by taking components, intersecting, taking unions, and repeating.
  Then $Y$ is reduced over all but finitely many $p$, and over $\Q$.

  Let $\lambda$ is the lexicographic weighting
  $(\varepsilon,\varepsilon^2,\ldots,\varepsilon^n)$ on the variables.
  Let $\init Y$ be the initial scheme of $Y$. Then (away from those $p$)
  $\init Y$ is a Stanley-Reisner scheme.

\end{Thm}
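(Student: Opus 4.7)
The plan is to prove the result via Frobenius splittings, in the style of Mehta--Ramanathan. Work first in characteristic $p>0$ and consider the additive map $\sigma := \Tr(f^{p-1}\cdot \bullet)$ on $\mathbb{F}_p[x_1,\ldots,x_n]$, where $\Tr$ denotes the trace of the Frobenius. The hypothesis that the lex-first term of $f$ is a product $x_{i_1}\cdots x_{i_k}$ of $k$ distinct variables means that $(x_{i_1}\cdots x_{i_k})^{p-1}$ appears with coefficient $1$ in $f^{p-1}$, and a short combinatorial argument (a degree count identical to the one in \cite{Kfs}) shows that no other monomial in $f^{p-1}$ contributes to $\sigma(1)$. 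Thus $\sigma(1)=1$, so $\sigma$ is a Frobenius splitting, and by construction the zero-scheme $\{f=0\}$ is compatibly split.

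The next step is to invoke the two standard stability properties of compatible splittings: the scheme-theoretic intersection of two compatibly split subschemes is again compatibly split, and every irreducible component of a compatibly split subscheme is compatibly split (for unions the claim is formal). Iterating these operations starting from $\{f=0\}$, every scheme $Y$ produced by the decompose/intersect/union procedure is compatibly split with respect to $\sigma$. Since any compatibly split subscheme is automatically reduced in characteristic $p$, we conclude that $Y_p$ is reduced for all sufficiently large primes $p$, and a standard spreading-out argument lifts the conclusion to $\Q$.

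For the initial-scheme assertion, degenerate via the one-parameter subgroup $x_i \mapsto \varepsilon^i x_i$. This rescaling produces a $T$-equivariant flat family whose generic fiber is $Y$ and whose special fiber is the lex-initial scheme $\init Y$. Applying the same rescaling to $\sigma$ yields, in the $\varepsilon\to 0$ limit, the splitting $\Tr((\init f)^{p-1}\bullet)$, which compatibly splits the coordinate-hyperplane union $\{x_{i_1}\cdots x_{i_k}=0\}$. Because the decompose/intersect/union procedure commutes with flat $T$-equivariant degeneration, $\init Y$ is built by the same operations starting from this arrangement of coordinate hyperplanes; but every scheme produced by intersecting, decomposing, and unioning coordinate subspaces is itself a union of coordinate subspaces, i.e.\ a Stanley--Reisner scheme. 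This yields the second assertion.

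The main obstacle, as I see it, is justifying that the decompose/intersect/union procedure commutes with passage to the lex-initial ideal. This comes down to checking that the $T$-action which realizes the lex-degeneration as a \emph{flat} family is compatible with both the splitting and with taking components; equivalently, that the irreducible components of $Y$ degenerate to components of $\init Y$ without picking up embedded primes. This compatibility is the technical heart of \cite[Theorem 4]{Kfs}, and the rest of the argument sketched above is essentially a formal consequence of it together with the Frobenius-splitting machinery.
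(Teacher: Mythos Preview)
The paper does not give its own proof of this theorem: it is quoted from \cite{Kfs} and used as a black box, so there is nothing to compare against here. Your sketch is the right one --- indeed, the earlier statement of this same result in the paper (Theorem~\ref{thm:initprod}) includes the parenthetical ``Or more generally, let $Y$ be compatibly split with respect to the splitting $\Tr(f^{p-1}\bullet)$,'' which signals exactly the Frobenius-splitting mechanism you outline. Your identification of the key technical point (that passage to the lex-initial scheme is compatible with the decompose/intersect/union process via the flat $T$-equivariant degeneration and the induced splitting on the special fiber) is also accurate; that is precisely what is established in \cite{Kfs}.
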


 In particular, we have

\begin{Thm} \cite{Ksw}
$\text{lex}\init \af{f,\circ}^\lambda = SR(\Delta (Q_\lambda, f))$. \end{Thm}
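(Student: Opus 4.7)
My plan is to leverage the isomorphism of Theorem \ref{thm:mainthm} together with Knutson's Stanley--Reisner theorem (Theorem \ref{thm:initprod}) and the Gr\"obner-concatenation/subword theorem (Theorem \ref{thm:kl}). First I would invoke the main theorem to identify $\af{f,\circ}^{t_\lambda}$ with the patch $\overline{\Pi_f}|_\lambda$ sitting inside the big cell $\mathbb{A}^{k(n-k)}$ whose coordinates are the free matrix entries $a_{ij}$. Under the Bott--Samelson parametrization $\beta_{Q_\lambda}\colon \mathbb{A}^{|Q_\lambda|}\xrightarrow{\sim}\af{\circ}^{t_\lambda}$ from Section \ref{sec:oldtermorder}, the $a_{ij}$ correspond bijectively to the parameters $c_\sigma$ indexed by letters $\sigma$ of $Q_\lambda$. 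The reverse-lex order on the $a_{ij}$ introduced in Section \ref{sec:newtermorder} matches the lex order on the $c_\sigma$ (up to switching commuting letters of $Q_\lambda$), and the lemma of Section \ref{sec:newtermorder} says the lex-first term of $\prod_{i=1}^n\det M_{[i,i+k-1]}$ is $\prod_{i,j}a_{ij}$, a product of distinct variables.

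Next I would observe that the positroid stratification of the big cell is produced by the intersect/decompose/repeat process starting from the divisor $D=\{\prod_{i=1}^n\det M_{[i,i+k-1]}=0\}=\bigcup_{i}X_{s_i}$: the basic strata (single essential box) are precisely the Schubert divisors, and the general $\overline{\Pi_f}|_\lambda$ is obtained from these by repeated intersection, decomposition, and union, just as in Section \ref{sec:strats}. Theorem \ref{thm:initprod} then applies verbatim and yields that $\mathrm{lex}\init\af{f,\circ}^{t_\lambda}$ is a Stanley--Reisner scheme. The extra input of Theorem \ref{thm:kl} (Kfs Theorem 7) identifies the facets of the resulting Stanley--Reisner complex with the maximal subsets of letters of $Q_\lambda$ whose complement is a reduced expression for $f$.

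Finally I would match this complex with $\Delta(Q_\lambda,f)$ on the nose. By definition, a facet of $\Delta(Q_\lambda,f)$ is a subword $Q_\lambda\setminus P$ such that $P$ is a reduced expression for $f$; the description of facets from Theorem \ref{thm:kl} produces exactly the same collection under the bijection between variables $c_\sigma$ and letters of $Q_\lambda$. Any residual ambiguity coming from the non-canonical reading order on a single block of the strip is absorbed by Theorem \ref{thm:321}, since different readings differ only by swaps of commuting generators, and by Lemma \ref{lem:321} the permutation $\pi_\lambda$ is $321$-avoiding so its reduced words are all related by such swaps.

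The main obstacle will be the first step: pinning down precisely how the reverse-lex order on matrix entries (intrinsic to the Schubert patch) matches the lex order on Bott--Samelson coordinates (intrinsic to the affine opposite Schubert cell) under the identification of Theorem \ref{thm:mainthm}. Both orders pick out $\prod c_\sigma$ as the initial term of the generating hypersurface, but one must verify that they agree on every compatibly-split subscheme produced by the iterative process, not merely on the single divisor $D$. This is where the fully commutative structure of $\pi_\lambda$ combined with Theorem \ref{thm:321} does the real work, by reducing any order-discrepancy to an irrelevant relabeling of antidiagonals.
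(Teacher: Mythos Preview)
The paper does not give its own proof of this statement; it is cited directly from \cite{Ksw}, where it is proved for Kazhdan--Lusztig varieties $X_{w\circ}^v$ in any Kac--Moody $G/B$, in particular for the affine flag variety. One simply takes $v=t_\lambda$ (with reduced word $Q_\lambda$) and $w=f$, and the result of \cite{Ksw} applies verbatim on the affine side via the Bott--Samelson parametrization $\beta_{Q_\lambda}$ and its lex order. Nothing about positroids or the main theorem enters.

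Your route is not wrong, but it is circuitous, and in the paper's architecture it runs in the wrong direction. You pass through Theorem~\ref{thm:mainthm} to the positroid patch, check Stanley--Reisner--ness there via Theorem~\ref{thm:initprod}, and then invoke Theorem~\ref{thm:kl} to identify the complex as $\Delta(Q_\lambda,f)$. But Theorem~\ref{thm:kl} is itself a statement about Kazhdan--Lusztig varieties in Bott--Samelson coordinates with the lex order, so to apply it you are already back on the affine side and the positroid detour has bought you nothing. The ``main obstacle'' you flag---matching the revlex order on the $a_{ij}$ to the lex order on the $c_\sigma$---is exactly what the paper uses the isomorphism of Theorem~\ref{thm:mainthm} \emph{for} (see Figure~\ref{fig:summary}): the \cite{Ksw} theorem is taken as a black box on the affine side, and the main theorem then transports its consequences to positroid patches, not the other way around.
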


With our main theorem, this implies that, as in the non-affine case, we have a subword complex and also that the transversal duals are the facets of a subword complex. Recall from \S \ref{sec:swc} that all subword complexes are vertex-decomposable.

\begin{Prop} The facets of $SR(\Delta(Q_\lambda,f))$ correspond to affine pipe dreams. \end{Prop}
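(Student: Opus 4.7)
The plan is to exhibit a bijection between the facets of $\Delta(Q_\lambda,f)$ and the reduced affine pipe dreams for $f$ on the strip determined by $\lambda$, and to note that the Stanley-Reisner complex and the subword complex share the same facets by definition.

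First I would record the key identification: by the construction in Section~\ref{sec:newtermorder}, the boxes of the strip are in bijection with the letters of $Q_\lambda$. Under the term-order labeling, the box in the strip whose antidiagonal is marked $k$ carries the simple reflection $s_k$, and reading the strip west-to-east and south-to-north within a block recovers $Q_\lambda$ letter by letter. Hence any subset $D\subseteq Q_\lambda$ determines a tiling $T_D$ of the strip (crosses in the boxes indexed by $D$, elbows in the boxes of $Q_\lambda\setminus D$), and this assignment $D\mapsto T_D$ is a bijection between subwords of $Q_\lambda$ and tilings of the strip.

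Next I would invoke the earlier proposition (the one proved by induction on the number of crosses, analogous to Theorem~2.5 of \cite{KM03}) to translate the reducedness and permutation condition into the subword language. That result says that following pipes from the lower line to the upper line in the tiling $T_D$ produces the affine permutation obtained by multiplying together, in order, the simple reflections indexing the crosses, and moreover that the pipe dream is reduced exactly when $|D|=\ell$ of the resulting permutation. Consequently $T_D$ is a reduced affine pipe dream for $f$ iff the subword $D$ is a reduced expression for $\pi_f$, which is precisely the condition that $Q_\lambda\setminus D$ be a facet of $\Delta(Q_\lambda,f)$ by Lemma~2.2 of~\cite{KM} (reproduced in Section~\ref{sec:swc}). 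Since $SR(\Delta(Q_\lambda,f))$ has the same facets as $\Delta(Q_\lambda,f)$ by the definition of the Stanley-Reisner ring of a complex, this gives the desired correspondence.

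The main technical point to verify carefully is that the reading order used to compute the pipe permutation of $T_D$ agrees with the ordering of letters in $Q_\lambda$, so that the same ordered product of reflections is obtained on both sides; this is essentially built into the definition of $Q_\lambda$ in Section~\ref{sec:newtermorder}, but one must check that the cyclic shift implicit in $\pi_\lambda=Q_\lambda\,[k+1,\dots,k+n]$ does not disturb the match between reduced subwords representing $f$ and reduced pipe dreams representing the corresponding $\pi_f$. Beyond this, the argument is bookkeeping: extract from Lemma~\ref{lem:321} that $Q_\lambda$ is fully commutative (so ambiguity in the reading order collapses to a unique reduced word up to commutations, matching Theorem~\ref{thm:321}), then read off the bijection.
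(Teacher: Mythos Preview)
Your proposal is correct and takes essentially the same approach as the paper: identify boxes of the strip with letters of $Q_\lambda$, send crosses to the subword $D$ and elbows to the complement $Q_\lambda\setminus D$, and then observe that facets of the subword complex are exactly the complements of reduced expressions for $f$. The paper's proof is considerably terser---it simply states the elbow/cross dictionary without explicitly invoking the pipe-following proposition or Lemma~2.2 of \cite{KM}---so your version actually fills in justification that the paper leaves implicit.
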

\begin{proof}
As in the non-affine case, $SR(\Delta(Q_\lambda,f))$ is a subword complex. We define the correspondence as follows. A simple reflection $s_i$ is in a facet if and only if there is a corresponding $\textelbow$ tile in the pipe dream, meaning $s_i$ is not in the subword. Conversely, $s_i$ is not in any facet if and only if there is a corresponding $\textcross$ tile in the pipe dream, so $s_i$ is in the subword.
\end{proof}

Define the {\bf affine pipe dream complex} $\Delta(\pi, \lambda)$ to be the simplicial complex with vertices labeled by entries $(i,j)$ in the periodic strip and faces labeled by the elbow sets in the affine pipe dreams for $\pi$ with shape defined by $\lambda$.

\begin{Cor} The affine pipe dream complex for $f$ with $k$ balls is the subword complex $\Delta(Q,f)$.
\end{Cor}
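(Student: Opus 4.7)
The plan is to read this corollary as a direct consequence of the immediately preceding proposition (which matches facets of the two complexes) together with a definitional unpacking showing that both complexes are the downward closure of their common facet set on a shared vertex set.

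First, I would make the vertex-set identification explicit. By the term-order construction in \S\ref{sec:newtermorder}, the box at position $(i,j)$ of the periodic strip is labeled with a simple reflection $s_k$, and reading these labels in the prescribed left-to-right, south-to-north order within a block produces exactly the word $Q_\lambda$. This gives a canonical bijection between the boxes of the strip (the vertex set of the affine pipe dream complex $\Delta(\pi_f,\lambda)$) and the letters of $Q_\lambda$ (the vertex set of the subword complex $\Delta(Q_\lambda,f)$). Under this bijection, placing a crossing tile at $(i,j)$ corresponds to including the letter at that position in a subword of $Q_\lambda$, and leaving an elbow corresponds to keeping the letter in the complement.

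Next I would invoke the preceding proposition, which says that under this identification the elbow sets of reduced affine pipe dreams for $\pi_f$ with shape $\lambda$ are precisely the facets of $\Delta(Q_\lambda,f)$. By the definition of the affine pipe dream complex $\Delta(\pi_f,\lambda)$, these same elbow sets are its facets. Thus the two complexes have the identical facet sets in a common vertex set.

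Finally I would check that both complexes are the downward closure of this common facet set. For the subword complex this is immediate from Lemma 2.2 of \cite{KM}: if $F$ is any face then $Q_\lambda\setminus F$ contains some reduced expression $P$ for $\pi_f$, whence $F\subseteq Q_\lambda\setminus P$ sits inside a facet. On the pipe dream side, any subset of an elbow set of a reduced pipe dream is itself an elbow set (of a possibly non-reduced pipe dream whose crossings still contain $\pi_f$ as a reduced subexpression), so $\Delta(\pi_f,\lambda)$ is also the downward closure of its facet set. Two pure simplicial complexes on the same vertex set with the same facets are equal, so $\Delta(\pi_f,\lambda)=\Delta(Q_\lambda,f)$. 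There is no real obstacle here beyond unpacking the bijection between boxes and letters of $Q_\lambda$; the content has already been established in the preceding proposition and in the purity/structure results from \cite{KM} cited earlier.
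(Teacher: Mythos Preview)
Your proposal is correct and matches the paper's intent: the paper gives no separate proof for this corollary, treating it as immediate from the preceding proposition together with the definition of the affine pipe dream complex (whose facets are, by definition, elbow sets of reduced affine pipe dreams). You have simply made explicit the vertex bijection and the downward-closure check that the paper leaves tacit.
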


\begin{cont_ex}{\bf\ref{ex:mainthmex}. } See Figure \ref{fig:affinepipedreamcomplex} for the complex $\Delta(Q_\lambda,f)$. Note that non-maximal faces may correspond to non-reduced affine pipe dreams.

\begin{figure}
\begin{center}
\scalebox{.5}{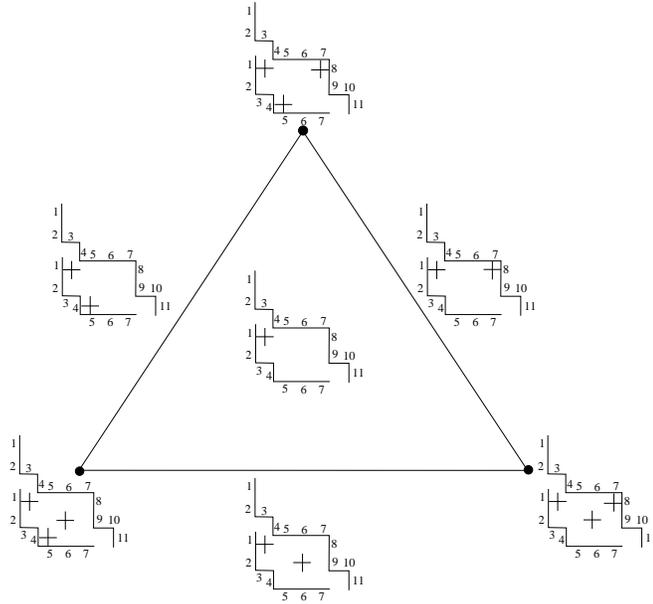}
\caption{The affine pipe dream complex for $\pi_\lambda=5\;4\;6\;8\;7\;9\;10$, with cone vertices removed.}
\label{fig:affinepipedreamcomplex}
\end{center}
\end{figure}

\end{cont_ex}

\begin{Prop} The rank conditions are a Gr{\"o}bner basis for the ideal $SR(\Delta(Q_\lambda,w))$. \end{Prop}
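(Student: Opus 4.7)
The plan is to use the standard characterization of a Gr\"obner basis: a set $G$ generating an ideal $I$ is a Gr\"obner basis exactly when the leading terms of the elements of $G$ generate $\init I$. I already know two things about the ideal $I$ cutting out $\overline{\Pi_f}|_\lambda$: by the Main Theorem (Theorem \ref{thm:mainthm}), $I$ is carried to the ideal of the affine Kazhdan--Lusztig variety $\af{f,\circ}^{t_\lambda}$ under the constructed isomorphism; and by the quoted theorem of \cite{Ksw} (which in turn rests on Theorem \ref{thm:initprod}), the lex-initial ideal of that Kazhdan--Lusztig ideal equals $SR(\Delta(Q_\lambda,f))$. So what remains is to identify the leading terms of the rank-condition determinants with the minimal generators of this Stanley--Reisner ideal.

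First I would pin down the leading terms. Each rank condition $\mathrm{rank}\,[i,j]\le r_{ij}$ is the ideal generated by the $(r_{ij}+1)\times(r_{ij}+1)$ minors of the cyclic submatrix on columns $[i,j]$. With respect to the revlex order of \S\ref{sec:newtermorder}, the lemma showing $\init\prod_{i=1}^n\det M_{[i,i+k-1]}=\prod_{i,j}a_{ij}$ applies to each individual minor: the split-antidiagonal labeling strips off the largest-indexed entry first, so the leading monomial of each such minor is exactly the product of the variables on one of the split antidiagonals of size $1+r_{ij}$ inside the submatrix $[i,j]$, as labeled by our term order.

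Next I would match these leading monomials with the minimal non-faces of $\Delta(Q_\lambda,f)$. By the proposition just established, facets of $\Delta(Q_\lambda,f)$ correspond to reduced affine pipe dreams via the elbow/cross dictionary, so minimal non-faces correspond to minimal collections of boxes in the strip that force every filling to have ``too few'' elbows somewhere, i.e., every pipe dream representative of $f$ must contain a crossing in at least one of these boxes. Following the transversal-dual theorem of Knutson--Miller (quoted earlier as \cite{KM} and \cite[Theorem 3]{JM}), applied blockwise in the strip using that $\pi_\lambda$ is $321$-avoiding (Lemma \ref{lem:321}) so that cyclic/affine antidiagonals behave like the ordinary ones locally, the minimal such collections of boxes are precisely the split antidiagonals of size $1+r_{ij}$ appearing as leading terms above. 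Hence the leading terms of the rank-condition determinants are exactly the minimal generators of $SR(\Delta(Q_\lambda,f))$. Combined with the $\supseteq$ direction that follows automatically from $\init I\supseteq\langle\init g : g\in G\rangle$, this yields equality and proves $G$ is a Gr\"obner basis.

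The main obstacle I anticipate is the transversal-dual step in the affine setting: the statement of \cite{KM} is for ordinary permutations and square grids, whereas the antidiagonals here live on an infinite strip and wrap around cyclically through the collapsed $k\times(n-k)$ block. The careful point is to check that every minimal non-face of $\Delta(Q_\lambda,f)$ is supported inside a single essential column interval (so that the local Knutson--Miller argument applies) and that no spurious non-faces arise from the cyclic identifications; the fully commutative structure of $Q_\lambda$, together with Theorem \ref{thm:321}, is what makes this reduction go through.
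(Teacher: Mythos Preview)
Your strategy is genuinely different from the paper's, and the obstacle you flag at the end is real and unresolved. You attempt to match the leading monomials of \emph{all} rank-condition minors against the minimal non-faces of $\Delta(Q_\lambda,f)$ via the transversal-dual theorem of \cite{KM,JM}. But that theorem is proved for ordinary permutations on a square grid; transporting it to the periodic strip with cyclic column intervals is not automatic, and your appeal to full commutativity of $Q_\lambda$ and Theorem~\ref{thm:321} is a heuristic, not an argument. Establishing that affine transversal duality would be at least as hard as the proposition itself, so as written the proof has a gap exactly where you suspected.

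The paper avoids this entirely with a two-step reduction. First, by Theorem~\ref{thm:kl} it suffices to treat \emph{basic} juggling patterns---those with a single essential rank condition---since the Gr\"obner basis for a general $f$ is the concatenation of Gr\"obner bases for the basic elements below it. Second, for a single condition $\mathrm{rank}\,M_{[j,j+m-1]}\le r$, the paper observes that the split antidiagonal selected by the term order of \S\ref{sec:newtermorder} becomes an honest diagonal after a cyclic permutation of the rows (determined by the identity column cyclically preceding $j$); since rank is invariant under row permutation, one may then invoke Sturmfels' classical theorem \cite{Sturm} that the $(r{+}1)$-minors of a generic matrix are a Gr\"obner basis for any diagonal term order. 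No subword-complex combinatorics or transversal duality enters at all.
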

\begin{proof}  The new term order we defined in \S \ref{sec:newtermorder} picks out terms on a split antidiagonal, determined as follows: for the $k \times k$ submatrix starting with column $j$, find the cyclically last column $j'<j$ that is a column of the identity, with 1 in row $i'$. Then the antidiagonal in our submatrix picks out rows in the order $i', i'-1,\ldots,1, k, k-1, \ldots i'+1$. Since rank conditions are preserved under permutation of rows, we can permute the rows such that the term order picks out the diagonals. Then we can apply Theorem 1 in \cite{Sturm}.\\
By Theorem \ref{thm:kl}, we need only show this for the basic elements, since for a non-basic element  we can then construct the Gr{\"o}bner basis by concatenating the bases of the basic elements.\end{proof}

\subsection{The Ground State Case and Le/Cauchon Diagrams}

In this section, we will spell out our main isomorphism in the ground state case and relate it to \Le- and Cauchon diagrams, to show how our (more general) case is much richer.

For a partition $\lambda$, \cite{P} defines a {\bf \Le-diagram} (``Le" diagram) $D$ of shape $\lambda$ as
a filling of boxes of the Young diagram of shape $\lambda$ with $0$'s and $1$'s such that, for any three boxes
indexed $(i',j)$, $(i',j')$, $(i,j')$, where $i<i'$ and $j<j'$, filled with $a$, $b$, $c$, correspondingly,
if $a,c\neq 0$ then $b\neq 0$.  These three boxes should form the shape of a backwards ``L," pronounced ``el" (thus the name). See Figure \ref{fig:leex}. Let $\Le_{kn}$ be the set of \Le- diagrams whose shape $\lambda$ is contained in the $k\times (n-k)$ rectangle, and $|D|$ be the number of $1$'s in a diagram $D$.
\begin{figure}
\begin{center}
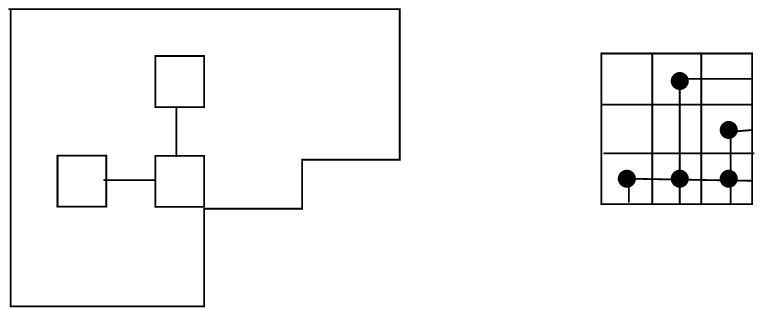
\caption{The Le diagram condition: $a,c\neq 0$ then $b\neq 0$, and an example of a Le diagram.}
\label{fig:leex}
\end{center}
\end{figure}
We will consider \Le- diagrams that are filled with 1's and 0's, where 1 denotes ``in" the diagram. For each 1, draw the {\bf hook}, a line going east and a line going south from the containing box. Then the \Le- condition is equivalent to requiring that there is a 1 at every intersection of hook lines. Let us call a 0 {\bf blocked} if there is a 1 above it in the same column. Then for each blocked 0, all entries to the west in the same row are also 0's.

 We now consider another type of diagram. As in \cite{GLL}, a {\bf Cauchon diagram} is an $m \times p$ grid of squares in which certain squares are black, according to the following rule: If a square is black, then either every square strictly to its left is black, or every square strictly above it is black. We let $C_{m,p}$ denote the set of $m \times p$ Cauchon diagrams, and say that a square indexed by $(i,j)$ {\bf belongs} to a diagram $C$ if it is black.

\begin{Lem} The \Le- diagrams and Cauchon diagrams are equivalent in the case where $\lambda$ is a rectangle. The bijection between Cauchon diagrams and \Le- diagrams maps black squares in a Cauchon diagram to boxes {\em not} in the \Le- diagram, and white squares in the Cauchon diagram to boxes {\em in} the \Le- diagram.\end{Lem}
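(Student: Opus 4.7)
The plan is to verify directly that the claimed bijection $\phi$ — swapping ``black in Cauchon'' with ``$0$ in \Le-'' on the shared $k\times(n-k)$ grid — intertwines the two defining conditions. Since $\phi$ is manifestly an involution on $\{0,1\}$-fillings of the rectangle, the content is to show that $D$ satisfies the \Le-axiom if and only if $\phi(D)$ satisfies the Cauchon axiom.

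First I would rephrase the \Le-axiom as its contrapositive: whenever $(i',j')$ is filled with $0$, every pair $(i,j)$ with $i<i'$, $j<j'$ must have $(i',j)=0$ or $(i,j')=0$. The key step is to show this pointwise statement is equivalent to the more rigid one that either \emph{all} boxes in row $i'$ strictly west of column $j'$ are $0$, or \emph{all} boxes in column $j'$ strictly north of row $i'$ are $0$. The nontrivial direction is: if neither ``all''-statement were true, one could pick witnesses $j_0<j'$ with $(i',j_0)=1$ and $i_0<i'$ with $(i_0,j')=1$, and then the backwards-$L$ on $(i_0,j_0)$, $(i',j_0)$, $(i_0,j')$, $(i',j')$ would violate the \Le-axiom. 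The reverse direction of this equivalence is immediate.

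Translating through $\phi$, the rigid statement reads: whenever $(i',j')$ is black, every square strictly to its left is black or every square strictly above it is black — exactly the Cauchon axiom. This gives the forward direction of the bijection. For the reverse direction I would run the argument backward: given three \Le-boxes in the backwards-$L$ shape with $a,c\neq 0$, if $b$ were $0$ then the corresponding square in $\phi^{-1}(C)$ would be black, so by the Cauchon axiom either $(i',j)$ or $(i,j')$ would be black, contradicting $a\neq 0$ or $c\neq 0$ respectively.

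I do not anticipate a serious obstacle, since both axioms are local four-corner statements and the proof is essentially a logical rearrangement. The only point that requires care is the orientation: the \Le-diagram uses matrix indexing so that smaller $i$ means higher row, and one must check that ``$(i',j)$ is southwest of $(i,j')$'' in the \Le-picture corresponds to ``strictly to the left'' and ``strictly above'' in the Cauchon picture. Once the indexing is pinned down, the equivalence of the two axioms is a short verification.
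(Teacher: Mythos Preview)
Your proposal is correct and follows essentially the same approach as the paper. The paper's proof is briefer and phrased in the ``hook'' language introduced just before the lemma: a hook crossing at $(i',j')$ means there is a $1$ somewhere west in row $i'$ and a $1$ somewhere north in column $j'$, and the \Le-condition is restated as ``every hook crossing carries a $1$.'' The paper then observes that if $(i',j')$ is black in the Cauchon diagram, the Cauchon rule forces no hook from the west or no hook from the north, hence no crossing --- which is exactly your ``rigid'' reformulation of the contrapositive. Your version spells out both directions and the witness argument explicitly, whereas the paper only writes out the Cauchon $\Rightarrow$ \Le\ direction, but the logical content is the same De~Morgan manipulation you describe.
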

\begin{proof}  A square is black in a Cauchon diagram if all the boxes to the west are black, or all the squares north are black. Then in the \Le- diagram, either there is no hook coming in from the west, or there is no hook coming in from the north. Then there is no hook crossing, so the box is not in the \Le- diagram. Note that satisfying the Cauchon condition that all west boxes are black or all above are black does not imply that a box is black.\end{proof}

 We define the subset of permutations called {\bf restricted permutations} $$S_{m+p}^{[-p,m]} := \{w \in S_{m+p} \; | \; -p \leq w(i)-i \leq m \text{ for all } i \in [1,m+p] \}.$$ Note that $S_{m+p}^{[-p,m]}  \subseteq S_{m+p}$, and $$S_{m+p}^{[-p,m]} = \{ w \in S_{m+p} \; | \; w \leq (m+1,m+2,\ldots,m+p,1,2,\ldots m)\}.$$

\begin{Thm} \cite[Theorem 24.1]{P} The nonempty totally nonnegative cells in $M_{mp}^{\geq 0}(\R)$ are indexed by $m \times p$ Cauchon diagrams. \end{Thm}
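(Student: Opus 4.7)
The plan is to factor through the correspondence from the earlier \cite{KLS} theorem between totally nonnegative cells and bounded juggling patterns. Using the embedding $M_{m\times p} \hookrightarrow Gr_m \mathbb{C}^{m+p}$ that sends $A$ to the row span of $(I_m \mid A)$, totally nonnegative $m\times p$ matrices land in a Schubert patch on the TNN Grassmannian, and the positroid stratification pulls back to a stratification of $M_{m\times p}^{\geq 0}$. The bounded juggling patterns indexing nonempty cells of $M_{m\times p}^{\geq 0}$ are precisely those whose associated bounded affine permutation lies in the restricted permutation set $S_{m+p}^{[-p,m]} = [e, w_0]$, where $w_0 = (m{+}1, m{+}2, \ldots, m{+}p, 1, 2, \ldots, m)$. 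So the theorem reduces to establishing a bijection $S_{m+p}^{[-p,m]} \longleftrightarrow C_{m,p}$.

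To construct this bijection I would use the pipe-dream description developed in \S\ref{sec:pipedreams}. The longest element $w_0$ of our interval has a canonical reduced expression whose wiring diagram fills an $m\times p$ rectangle with simple reflections $s_{j-i+m}$ in box $(i,j)$; the fully-crossed pipe dream in this rectangle represents $w_0$ itself. For each $w\leq w_0$, associate its top pipe dream $D_{top}(w)$ inside this rectangle and color a box black if it holds an elbow, white if it holds a cross. Then the Cauchon condition ``every black square has either a full black column above it or a full black row to its left'' is equivalent to the assertion that the complementary set of crosses is a reduced pipe dream for some $w \le w_0$: whenever the top pipe dream pushes crosses as far north as possible, elbow boxes are exactly those whose row to the west or column to the north is already elbowed. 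The equivalence with $\Le$-diagrams from the preceding lemma is compatible with this identification under complementation.

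The main obstacle will be verifying that this combinatorial bijection really classifies nonempty cells, rather than merely indexing reduced pipe dreams. Injectivity is immediate from Theorem 3.7 of \cite{BB}, since distinct permutations have distinct top pipe dreams. For surjectivity and the geometric content, one must show that every Cauchon diagram is realized by an honest TNN matrix. I would do this by producing, for each $C\in C_{m,p}$, an explicit TNN representative as an ordered product of elementary Jacobi matrices
\[
M_C(\mathbf{t}) \;=\; \prod_{(i,j)\text{ white}} \bigl(I + t_{ij}E_{\alpha(i,j)}\bigr), \qquad t_{ij}>0,
\]
taken in the reading order dictated by the pipe-dream structure, where $\alpha(i,j)$ is the simple root labeling box $(i,j)$. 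The Lindström–Gessel–Viennot lemma, applied to the paths in the wiring diagram, shows that the minors of $M_C(\mathbf{t})$ have the vanishing/nonvanishing pattern prescribed by $C$ and are positive polynomials in the $t_{ij}$'s, so $M_C(\mathbf{t})$ is TNN and lies in a single cell. The map $\mathbf{t}\mapsto M_C(\mathbf{t})$ then parametrizes that cell, and $C$ is recoverable from the vanishing pattern of its minors, closing the bijection.
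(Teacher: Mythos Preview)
The paper does not give its own proof of this statement: it is quoted verbatim as Theorem~24.1 from Postnikov~\cite{P} and immediately followed only by the remark that the bijection $C_{m,p}\leftrightarrow S_{m+p}^{[-p,m]}$ is due to Launois~\cite{Hprime} and is realized via pipe dreams in \S19 of~\cite{P}. So there is no argument in the paper to compare your proposal against; you are supplying a proof where the thesis simply cites one.

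That said, your outline is broadly the right shape and tracks the references the paper points to. A few cautions. First, your convention (black $=$ elbow, white $=$ cross, using the \emph{top} pipe dream) is opposite to the one the paper adopts in its own later theorem relating Cauchon diagrams to affine \emph{bottom} pipe dreams (black $\to$ cross, white $\to$ elbow); these are exchanged by an appropriate symmetry, but you should be explicit about which you mean or you will clash with the surrounding text. Second, the sentence ``elbow boxes are exactly those whose row to the west or column to the north is already elbowed'' is not quite the characterization of top pipe dreams and does not literally match the Cauchon condition; you would need to argue more carefully here, e.g.\ via the ``no inverse chute'' criterion of \cite[Theorem~3.7]{BB}. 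Third, the geometric half---realizing each diagram by an explicit TNN matrix via a product of elementary factors and invoking Lindstr\"om--Gessel--Viennot---is exactly Postnikov's strategy, so if you pursue it you are essentially reproducing the cited proof rather than giving a new one.
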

The $m \times p$ Cauchon diagrams biject with the restricted permutations $S_{m+p}^{[-p,m]}$, as shown in \cite{Hprime}, and there is a bijection between them using pipe dreams, shown in \S 19 of \cite{P}.

As in $\S 19$ of \cite{P}, we have a bijection between pairs of permutations $(u_D,w_\lambda)$ and \Le- diagrams $D$ of shape $\lambda$. Then we construct a wiring diagram as described in \S \ref{sec:perms}, now replacing each 1-box in $D$ with an elbow tile, and each 0-box with a crossing tile. Then we read off the permutation as before, denoting it $u_D \in S_n$.

\begin{Exa} See Figure \ref{fig:youngdia_2} for the construction of $(u_D,w_\lambda)=(31254,31452)$.\end{Exa}

\begin{figure}[htbp]
\begin{center}
\scalebox{0.6}{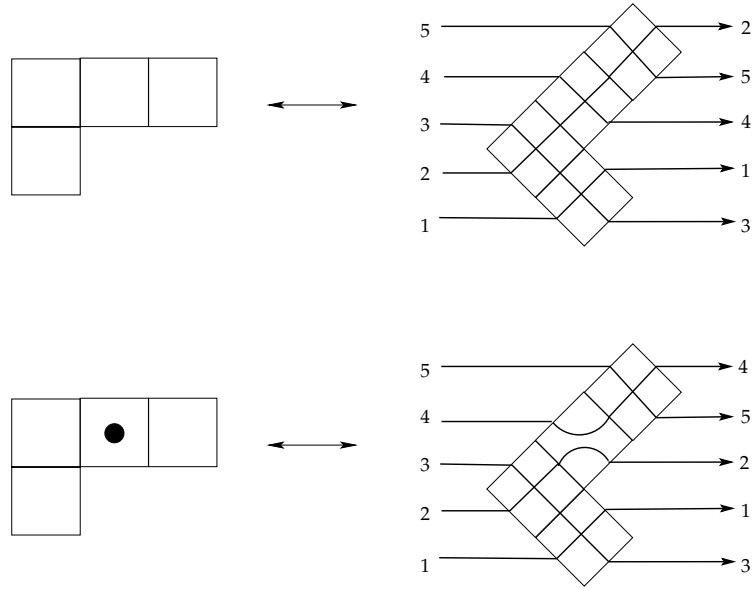}
\caption{Corresponding the permutations $u_D = 31254$ and $w_\lambda = 31452$ with the Le-diagram $D$ of shape $\lambda = (3,1)$.}
\label{fig:youngdia_2}
\end{center}
\end{figure}

\begin{Thm} [Theorem 19.1, \cite{P}]  $D \mapsto u_D$ is a bijection between $D$ of shape $\lambda$ and $u \in S_n$ such that $u \leq w_\lambda$ in Bruhat order, where $w_\lambda$ is the Grassmannian permutation associated to the Young diagram of shape $\lambda$. The number of 1's in $D$ is equal to $\ell(w_k)-\ell(u_D)$.\end{Thm}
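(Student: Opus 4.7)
My plan is to establish the bijection by reading the correspondence through wiring diagrams, leveraging the fact that the all-zeros filling recovers $w_\lambda$ itself.

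First, I would verify that the all-zeros Le-diagram (every box a 0, hence every box a crossing tile) produces exactly $w_\lambda$. This is immediate from the construction in Section~\ref{sec:perms}: the wires going along the rows and down the columns of the rotated Young diagram give precisely the reduced word for $w_\lambda$ whose crossings are indexed by the boxes of $\lambda$. In particular $\ell(w_\lambda) = |\lambda|$, the number of boxes. Fix this as a canonical reduced expression $w_\lambda = s_{i_1}\cdots s_{i_N}$ with $N=|\lambda|$, where each letter is located in a specific box.

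Second, for a general Le-diagram $D$, replacing 1-boxes by elbows and 0-boxes by crossings deletes a subset of the simple reflections in this canonical word, leaving the subword $Q_D$ indexed by the 0-boxes. The associated permutation $u_D$ is then the product of $Q_D$, which is automatically $\leq w_\lambda$ in Bruhat order by the subword characterization of Bruhat order. The key claim is that the Le condition is equivalent to $Q_D$ being a \emph{reduced} expression, equivalently, no two pipes in the wiring diagram of $D$ cross twice. I would prove this by analyzing the ``backwards L'' locally: if two pipes $p$ and $q$ meet at a crossing in some box $(i',j')$, the configuration that would force them to cross a second time would be an elbow at $(i',j)$ that sends $p$ south and an elbow at $(i,j')$ that sends $q$ west, bringing them back together — exactly the configuration $a,c\neq 0$ with $b=0$ forbidden by the Le condition. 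Conversely, a violation of the Le condition can be used to exhibit a pair of pipes crossing twice.

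Once reducedness is in hand, the length count is immediate: the crossings in the wiring diagram contribute the inversions of $u_D$, so $\ell(u_D) = \#\{\text{crossings}\} = |\lambda| - |D|$, giving $\ell(w_\lambda) - \ell(u_D) = |D|$. Injectivity follows because the positions of the 1's can be read off from the wiring diagram: they are precisely the boxes where two pipes bounce rather than cross. For surjectivity, given $u \leq w_\lambda$, the subword property guarantees some reduced subexpression of the canonical word for $w_\lambda$ representing $u$; I would produce a canonical choice by a greedy sweep (say row by row, left to right) that places a 1 in a box exactly when the partial product so far can be shortened by skipping that reflection, and verify the Le condition holds for this choice by induction on the sweep, using the same local ``backwards L'' analysis.

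The main obstacle is pinning down the equivalence between the Le condition and reducedness of the wiring diagram. It is easy to see that each direction follows from a local analysis of an L-shaped triple of boxes, but one must be careful that a violation \emph{globally} propagates to a genuine double crossing (the two offending pipes might be far apart in the wiring diagram), and conversely that a global double crossing can always be localized to a Le-violation of three boxes. I expect to handle this by induction on the rows of $\lambda$, tracking how pipes enter and exit each row, so that any double crossing is forced to first appear at a minimal L-configuration.
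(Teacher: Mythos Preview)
The thesis does not prove this theorem; it is quoted verbatim as Theorem~19.1 of \cite{P} and used as a black box. So there is no ``paper's own proof'' to compare against here.

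That said, your outline contains a genuine error at its central step. You claim that the \Le-condition on $D$ is equivalent to the subword $Q_D$ (the crossings) being a \emph{reduced} expression for $u_D$. This is false: the \Le-condition is strictly stronger than reducedness. Take $k=2$, $n=4$, $\lambda=(2,2)$, so $w_\lambda=3412$ with reduced word $s_2 s_1 s_3 s_2$ indexed by the four boxes of the $2\times 2$ square, the two $s_2$'s sitting at $(1,1)$ and $(2,2)$. The single-crossing diagram with a $0$ only at $(2,2)$ is reduced (one crossing, $u_D=1324$), yet it violates the \Le-condition: with $(i',j)=(2,1)$, $(i',j')=(2,2)$, $(i,j')=(1,2)$ we have $a=c=1$ but $b=0$. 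The other single-crossing diagram, with a $0$ only at $(1,1)$, is also reduced and \emph{does} satisfy the \Le-condition. Both represent the same $u=1324$, so reducedness alone cannot give a bijection.

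What the \Le-condition actually singles out is a \emph{canonical} reduced subexpression of the fixed word for $w_\lambda$ --- in Postnikov's argument, the rightmost (equivalently, positive distinguished) one. Your injectivity argument is therefore circular (reading the $1$'s off the wiring diagram just recovers $D$, not $u_D$), and your surjectivity sketch via a greedy sweep is on the right track but must be tied to this canonical choice and shown to coincide with the \Le-condition. The correct route is: (i) show that for each $u\le w_\lambda$ there is a unique rightmost reduced subword in the fixed reduced word for $w_\lambda$; (ii) show that the complement of this subword satisfies the \Le-condition; (iii) show conversely that any \Le-diagram is the complement of the rightmost subword for its $u_D$. The length formula then follows as you say.
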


We will consider affine pipe dreams in the ground state case, indexed by an arbitrary permutation $w$ and the patch is centered at $\lambda=(1,\ldots,k)$. The distinguished path gives an affine pipe dream, where a block in the strip is a $k \times (n-k)$ rectangle, and $v=w_0 w_0^P = (n-k+1,n-k+2,\ldots n, 1,2, \ldots, k)$, and $\Pi_w^v$  is the top point on $Gr_k \C^n$. See Figure \ref{fig:gspd} for the construction of an example of a ground state case.

\begin{figure}
\begin{center}
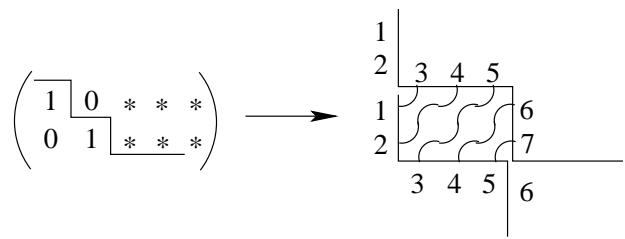
\caption{For $\lambda=(1,2)$ and $n=5$, this pipe dream corresponds to the pattern $22222$.}
\label{fig:gspd}
\end{center}
\end{figure}

\begin{Thm} Cauchon diagrams are in bijective correspondence with affine bottom pipe dreams. \end{Thm}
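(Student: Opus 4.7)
The plan is to exhibit the bijection explicitly by sending each Cauchon diagram $C$ to the affine pipe dream $\varphi(C)$ obtained by replacing every black square of $C$ with a crossing tile and every white square with an elbow tile. This assignment is compatible with the Cauchon--\Le{} dictionary just established (black $=$ not in \Le{} $=$ $0$-box $=$ crossing), so $\varphi(C)$ is automatically a valid reduced pipe dream on the $k\times(n-k)$ block, and extends uniquely to a periodic affine pipe dream on the ground-state strip.

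The main step is to verify that the image of $\varphi$ is exactly the set of affine bottom pipe dreams. A pipe dream is bottom iff it admits no inverse ladder move. An inverse ladder at $(r,c)$ requires a cross at $(r,c)$, an elbow at $(r,c-1)$, elbows at $(r+m,c-1)$ and $(r+m,c)$, and a column of crosses in the intermediate rows. I would argue: if $C$ is Cauchon, then the elbow at $(r,c-1)$ means row $r$ does not have all left entries black, so the Cauchon condition at $(r,c)$ forces all of column $c$ above row $r$ to be black; propagating this upward-and-left along the required column of crosses, one eventually reaches a square whose Cauchon condition is incompatible with the elbow at row $r+m$. Conversely, if the Cauchon condition fails at some $(i,j)$, one can find an elbow at some $(i',j)$ with $i'<i$ and an elbow at some $(i,j')$ with $j'<j$; choosing $i'$ and $j'$ extremally, the resulting submatrix is exactly the configuration needed to execute an inverse ladder move, contradicting bottomness.

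For the cardinality match, I would appeal to the Postnikov/Hodges--Yakimov bijection between $k\times(n-k)$ Cauchon diagrams and the restricted permutations $S_n^{[-(n-k),k]}$, which by Section 2.2 coincide with the bounded juggling patterns of length $n$ with $k$ balls; each such pattern has a unique bottom affine pipe dream by the affine analogue of Theorem 3.7 of \cite{BB}. Combined with the injectivity of $\varphi$ and the fact that its image lies in bottom pipe dreams, this gives the desired bijection.

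The main obstacle will be Step 2, in particular ensuring that the periodicity of the affine strip does not introduce additional inverse ladder moves across block boundaries that the planar Cauchon condition fails to obstruct. In the ground-state case this should reduce to noting that the L-shape of the distinguished path pins each corner, so that ladders are confined to a single rectangular block and the usual planar analysis suffices.
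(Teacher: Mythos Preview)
Your bijection is exactly the paper's (black $\mapsto$ cross, white $\mapsto$ elbow), but you omit one ingredient that the paper uses and that makes Step~2 immediate: a vertical flip of the diagram. Postnikov reads the \Le-permutation from the southwest corner going north then east, whereas pipe dreams are read from the northwest; after flipping, the Cauchon rule becomes ``every black square has all-black to its west or all-black \emph{below} it.'' That is precisely the statement that no cross has both an elbow somewhere to its west in the same row and an elbow somewhere to its south in the same column. The paper then observes that a non-bottom pipe dream admits a move toward bottom, and any such move requires exactly that forbidden configuration at the moving cross --- one line, no propagation. Your propagation argument (``Cauchon at $(r,c)$ forces all of column $c$ \emph{above} row $r$ to be black \ldots\ eventually incompatible with the elbow at row $r+m$'') is pointed the wrong way, since row $r+m$ lies below $(r,c)$; this is exactly the orientation mismatch the flip resolves.

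That said, your proposal is more complete in one respect: the paper's proof as written only establishes the contrapositive of one implication (not bottom $\Rightarrow$ not Cauchon). Your plan to close the bijection by counting --- Cauchon diagrams $\leftrightarrow S_{m+p}^{[-p,m]} \leftrightarrow$ bounded juggling patterns, each with a unique bottom affine pipe dream via the affine analogue of the Bergeron--Billey theorem --- is a genuine addition. Your periodicity concern is also well-placed in principle and correctly dismissed: in the ground-state case $\lambda=(1,\dots,k)$ the blocks of the strip are $k\times(n-k)$ rectangles glued only at corners, so chute and ladder configurations cannot straddle the boundary and the planar analysis suffices.
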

\begin{proof} Replace each black box in the Cauchon diagram $DS$ with a crossing tile, and each white box with an elbows tile. As in \S 19 of \cite{P}, the permutation associated with $D$ is read from the southwest corner, moving north and then east. Then flip $D$ vertically so that the pipes are read from the northwest corner, south then east, to match how pipes are read in pipe dreams. If an affine pipe dream $P$ is not a bottom pipe dream, then it has a possible chute or ladder move on box $(i,j)$. This implies that there is at least one box west of $(i,j)$ that is elbows, and at least one box south that is elbows. Then when we flip $P$ vertically to make a Cauchon diagram, that box (now black) has at least one white box west and at least one white box above, violating the Cauchon condition. \end{proof}

\begin{figure}
\begin{center}
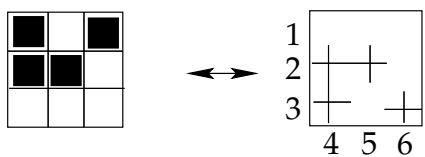
\caption{Cauchon diagram and affine bottom pipe dream for $w=143265$.}
\label{fig:cauchonex}
\end{center}
\end{figure}

See Figure \ref{fig:cauchonex} for an example of this correspondence. Thus, considering affine pipe dreams gives us a much more generalizable picture that correspond to Cauchon diagrams, and thus \Le- diagrams also, in the ground state case.

\end{document}